\pgfplotsset{compat=1.17}
\numberwithin{equation}{section}     
\declaretheorem[numberwithin=section]{theorem}  
\declaretheorem[sibling=theorem]{corollary} 
\declaretheorem[sibling=theorem]{lemma}
\declaretheorem[sibling=theorem, style=remark]{remark}
\renewcommand*{\backref}[1]{}
\renewcommand*{\backrefalt}[4]{\tiny 
  \ifcase #1 (\textbf{NOT CITED.})%
  \or    (Cited on page~#2.)%
  \else   (Cited on pages~#2.)%
  \fi}
\newcommand{\vertiii}[1]{{\left\vert\kern-0.25ex\left\vert\kern-0.25ex\left\vert #1 
    \right\vert\kern-0.25ex\right\vert\kern-0.25ex\right\vert}}
\newcommand{\CC}{\mathbb{C}}
\newcommand{\RR}{\mathbb{R}}
\newcommand{\NN}{\mathbb{N}}
\newcommand{\bu}{\bm{u}}
\newcommand{\bw}{\bm{w}}
\newcommand{\bx}{\bm{x}}
\newcommand{\pa}{\partial}
\newcommand{\bnu}{\boldsymbol{\nu}}
\newcommand{\dive}{\text{\normalfont div\,}}
\definecolor{Green}{rgb}{0.010,0.7,0.02}
\newcommand{\beq}{\begin{equation}}
\newcommand{\eeq}{\end{equation}}
\newcommand{\bal}{\begin{align}}
\newcommand{\eal}{\end{align}}
\newcommand{\bv}{\bm{v}}
\newcommand{\ba}{\bm{a}}
\newcommand{\by}{\bm{y}}
\newcommand{\bn}{\bm{n}}
\newcommand{\bz}{\bm{z}}
\newcommand{\cF}{\mathcal{F}}
\newcommand{\ep}{\epsilon}
\newcommand{\bell}{\boldsymbol{\ell}}
\newcommand{\ccdot}{\boldsymbol{\cdot}}
\newcommand{\vvdot}{\bm{:}}
\newcommand{\half}{\frac{1}{2}}
\newcommand{\nablahat}{\widehat{\nabla}}
\newcommand{\bg}{\bm{g}}
\newcommand{\bh}{\bm{h}}
\newcommand{\bgtilde}{\widetilde{\bg}}
\newcommand{\bhtilde}{\widetilde{\bh}}
\newcommand{\buBar}{\overline{\bu}}
\newcommand{\buBarDot}{\Dot{\buBar}}
\newcommand{\buDot}{\Dot{\bu}}
\newcommand{\cJ}{\mathcal{J}}
\newcommand{\cU}{\mathcal{U}}
\newcommand{\BB}{\mathbb{B}}
\newcommand{\MM}{\mathbb{M}}
\newcommand{\bb}{\bm{b}}
\newcommand{\bzero}{\bm{0}}
\newcommand{\bmP}{\bm{P}}
\newcommand{\bF}{\mathbf{F}}
\newcommand{\bt}{\bm{t}}
\newcommand{\tang}{\text{\normalfont tan}}
\newcommand{\btheta}{\boldsymbol{\theta}}
\title[Shape derivative]{A shape derivative algorithm for reconstructing elastic dislocations in geophysics}
\author[A. Aspri {\em et Al.}]{A. Aspri, E. Beretta, A. Lee and A. Mazzucato}
\date{\today}
\begin{document}

\begin{abstract}
We consider the inverse problem of determining an elastic dislocation that models a seismic fault in the quasi-static regime of aseismic, creeping faults, from displacement measurements made at the surface of Earth. We derive both a distributed as well as a boundary shape derivative that encodes the change in a misfit functional between the measured and the computed surface displacement under infinitesimal movements of the dislocation and infinitesimal changes in the slip vector, which gives the displacement jump across the dislocation. We employ the shape derivative in an iterative reconstruction algorithm. We present some numerical test of the reconstruction algorithm in a simplified 2D setting.   
\end{abstract}

\keywords{Elastic dislocation, slip, inverse problem, shape derivative, iterative reconstruction, elastostatics,  Discontinuous Galerkin, seismic fault}
\subjclass[2020]{35R30, 74G75, 65N30}

\maketitle

\section{Introduction} \label{s:intro}

This work concerns an inverse problem for elastic dislocations that model seismic faults. Specifically, we study an iterative reconstruction algorithm for the location and geometry of the fault and the slippage of the rock along the fault. Our algorithm is based on a shape derivative that encodes the changes of a least-square misfit functional between the measured data and a computed solution  under infinitesimal deformation of the fault and infinitesimal changes in the slip along the fault.

As customary in seismology, we model the Earth's Crust as a linearly elastic medium, but we allow the medium to be inhomogeneous. \textit{We assume throughout that the stiffness parameters in the interior of the Earth are known.} We work locally in a patch in the Crust, represented by a bounded domain in $\RR^d$, $d=2,3$. The two-dimensional case is a reduced model  that takes into consideration only depth effects in a vertical slice of the Crust. The surface of the Earth is clearly not subject to any elastic load, so homogeneous Neumann conditions can be imposed, while there are different physical choices of boundary conditions in the buried part.
The fault is modeled as an open surface across which the elastic displacement jumps, while the normal stress remains continuous. The jump  in the displacement measures how much the rock has slipped on each side of the fault surface.

The forward problem or direct problem consists in determining the elastic displacement outside of the fault from knowledge of the fault surface and the slip. The inverse problem consists in determining the fault surface and the slip from knowledge of the displacement at the surface of the Earth. The surface displacement can be measured from satellite interferometric data or data from global positioning systems (GPS). (For a general introduction to the geophysical model we refer to \cite{Eshelby73,Segall10} and references therein.)

While the study of elastic dislocations is by now classic, starting with the seminal work of Volterra and Somigliana at the turn of the last century (see \cite{V1907} and the historical perspective in \cite{F84}), and elastic dislocations have been used to model creeping tectonic faults during the interseismic period, especially at subduction zones in the Earth's Crust ( among the vast literature, we mention \cite{seismic1,seismic2,seismic3}), until recently most of the available results in the literature dealt with an idealized setting, in which the surface of the Earth is modeled as an infinite half-space (or plane) with constant elasticity parameters, the fault has the simple geometry of a rectangle (or segment) with either horizontal or oblique orientation, and the slip is constant.
In this specialized case then Okada obtained an explicit formula for the elastic displacement \cite{Okada92},  using the elastic Green's function in a half-space derived by Mindlin \cite{Mindl54}.

The forward problem can be recast as a singular source problem or as a non-homogeneous transmission problem that can be addressed  using variational techniques or potential theory depending on the regularity of the coefficients, the data, and the domain, including the fault. This has led to establishing well-posedness  under rather general assumptions on the elastic parameters, the geometry of the fault, and the slip
\cite{ABMdH19,ABM23,Vol09,Zwieten_et_al13,Zwieten_et_al14}.

The inverse problem consists of determining uniquely and then reconstructing the fault surface and the slip from remote measurements. In this work, we consider measurements of the elastic displacement made on an open patch of the Earth's surface. Uniqueness in the inverse problem has already been established under certain geometric conditions on the fault surface and the slip \cite{Vol09,ABMdH19} (see also the recent preprint \cite{DLMarXiv23}). Essentially, the fault surface must contain at least one corner singularity.
When the slip does not vanish at the boundary of the fault surface, singularities appear in the elastic displacement near such boundary, which can help in the reconstruction even in the presence of noise. However, these singularities are nonphysical, as the elastic displacement diverges. If the slip is assumed to vanish at the boundary of the fault, a physically justifiable assumption, given that typically only a part of the fault is active at a time, then in fact uniqueness holds under less stringent and more natural geometric assumptions on the fault \cite{ABM23}. Informally, faults have to be graphs but can be arbitrarily oriented. 

We recall the assumptions for both the direct as well as the inverse problem in more detail in Section \ref{s:prelim}.

The reconstruction algorithms available in the literature (we mention in particular the works \cite{IV06,IV09,VVI17, Vol22} and references therein) consider the case of a homogeneous, linearly elastic half-space and a planar fault. In this case, there is a layer potential representation for the solution with an explicit kernel. In the case of variable coefficients and more general fault geometries, the layer potential approach is more difficult to use. The approach we take instead is to minimize a misfit functional that measures the error between the computed solution and the measurements at the Earth's surface, using a distributed shape derivative. The shape derivative is obtained by solving a certain adjoint problem, which can be done efficiently using for instance a Finite Element Method (FEM) or a Discontinuous Galerkin (DG) Method. shape optimization is a well developed tool for geometric identification and reconstruction (we refer the reader, for example, to the monograph \cite{Zolesio92}). The regularity of the boundary and interfaces has also been carefully studied, at least for scalar problems. We mention in particular the work \cite{Laurain20}.

The minimization of the misfit functional follows a steepest descent algorithm, which we implement in practice only in the two-dimensional case, borrowing the approach developed in \cite{BMPS18} for reconstruction of polygonal partitions in EIT. In this approach, the vertices of the dislocation line are assumed to be nodes of a coarse domain partition and are moved individually using the shape derivative calculated along linear elements on the partition. The optimal position of each vertex is computed separately and used to update the polygonal surface. The advantage of this method is that it is local, hence it can be parallelized to increase efficiency. The drawback is its accuracy. To calculate the shape derivative, we employ a DG method to compute both the forward as well as the adjoint problem on a finer conforming mesh.  The steepest descent algorithm is stopped when a given tolerance is reached. As with most iterative methods, we have a good reconstruction only if the initial guess is sufficiently close to the true data for the problem (the fault surface and the slip vector). Furthermore, the limited accuracy of our approach, the inherent sensitivity of the shape derivative to small changes,  and the underlying ill-posedness of the inverse problem makes the reconstruction algorithm less stable. We find, consequently that we have to make more than one measurement or measure on a sufficiently large portion of the boundary.

In this work, we focus on the reconstruction of the fault surface assuming the slip is known (though we include one numerical test for the reconstruction of the slip). Indeed, the inverse problem is linear in the slip vector and hence stable, while the inverse problem for the fault surface is nonlinear.
We do not address the stability of the reconstruction, nor the convergence of the method, which we reserve to address in future work. 
In the case of planar faults and homogeneous elastic parameters,  Lipschitz stability for the inverse fault problem was proved in \cite{TV19}, albeit leading to a non-quantitative estimate, which therefore cannot be directly employed to establish convergence. Quantitative Lipschitz stability estimates for a linear crack were obtained in dimension 2 in \cite{BFV2008}.

As proof of concept, we have performed numerical experiments in simplified geometries, specifically for linear faults in a two-dimensional region. As expected, we find that when the slip vector is constant, hence singularities appear in the solution at the boundary of the dislocation, the reconstruction is more accurate, while the error is significantly larger if the slip vector is allowed to vanish at the endpoints
(see \cite{BFKL2010} where a reconstruction algorithm for linear cracks in dimension two is presented and tested).

The paper is organized as follows. In Section \ref{s:prelim}, we discuss the set-up for both the forward as well as the inverse problem and recall the uniqueness results for the inverse problem in \cite{ABMdH19,ABM23}. Section \ref{s:shape} is devoted to the rigorous derivation of the shape derivative. Lastly, in Section \ref{s:numerics}, we discuss the numerical tests and their outcomes.

\section{Preliminaries} \label{s:prelim}

We start by discussing the set-up for the forward problem and the assumptions we make on the fault surface and on the slip vector. We follow the notation and conventions in \cite{ABMdH19,ABM23}. We only discuss the three-dimensional case with obvious modification in the two-dimensional case.

We start by setting some notation. We employ the blackboard font e.g.  $\mathbb{A}$ for tensors, the boldface font e.g. $\mathbf{A}$ matrices, bold italics font e.g.  $\bm{a}$ vectors, and italics font for scalars e.g. $a$. We denote matrix-vector multiplication simply by $\mathbf{A}\, \bm{b}$, while $\ccdot$ and $\vvdot$ represents the inner product between vectors and matrices respectively. Lastly, we use Einstein notation of summation over repeated indices, e.g.
\begin{equation*}
   \bm{c}^i = [\mathbf{A} \bm{b}]^{i} = \mathbf{A}^{ij} \bm{b}_j.
\end{equation*}

As in \cite{ABM23}, we are concerned with faults buried in the Earth's Crust that undergo a creeping aseismic movement. Such a situation is typical of areas of microseismicity, but can lead to earthquakes of sizeable magnitude  \cite{CreepFault}. In this context, faults are typically shallow, and it is reasonable to work in a patch of the Earth's Crust, modeled as a bounded domain $\Omega$ with Lipschitz boundary. In the numerical implementation, $\Omega$ is of polygonal type.
The boundary of $\Omega$, $\pa\Omega$, is subdivided into two parts, one that represent the buried part $\Sigma$ and the other, $\pa\Omega\setminus \Sigma$, that lies on the surface of the Earth. We assume that $\Sigma$ is a closed subset of $\pa\Omega$. We model the fault by an open, orientable, Lipschitz surface $S$, with Lipschitz boundary $\pa S$ at positive distance $\delta$ from the boundary of $\Omega$. We assume that $S$ can be extended to a closed Lipschitz surface $\Gamma$ also at positive distance from $\pa\Omega$. Then $\Gamma$ splits $\Omega$ into two (possibly disconnected) components $\Omega^+$ and $\Omega^-$, with $\pa\Omega^-=\Gamma$ and $\Omega^+=\Omega \setminus \overline{\Omega^-}$. We denote the unit outer normal to $\pa\Omega$ with $\bnu$ and $\bn$ denotes the unit outer normal to $\Omega^-$, which induces an orientation on $S$.

We follow a standard notation for function spaces, e.g. $W^{s,p}(\Omega)$,
$s\geq 0$, $1\leq p\leq \infty$, denotes $L^p$-based Sobolev spaces and $W^{s,2}(\Omega)=H^s(\Omega)$. The space $H^{1/2}_0(\Omega)$ denotes, as customary, the closure of $C^\infty_c(\Omega)$ in the $H^s$ norm. We also define Sobolev spaces $H^s(\pa\Omega)$ on the boundary of $\Omega$  and $H^s(S)$ on the fault $S$  for $0\leq s<1$, using partitions of unity and a flattening of the boundary, which is justified by the regularity assumptions we have made on $\Omega$ and $S$. We also introduce Sobolev spaces of vector fields with zero trace on $\Sigma$. If $U$ is an open subset of $\Omega$ and $\Sigma \subset \pa U$, we set:
\beq \label{eq:HsDspaceDef}
  H^s_\Sigma(U) := \{ \bu \in H^s(U), \; \bu\lfloor_\Sigma = \bzero\}, \quad s>\half.
\eeq

We will also employ a less standard space on $S$, the space $H^\half_{00}(S)$, which can be defined as the weighted space:
\begin{equation} \label{eq:H1200def}
  H^{\half}_{00}(S):=\Big\{\bu\in H^{\frac{1}{2}}_{0}(S),
    \delta^{-1/2}\bu\in L^2(D)\Big\},
\end{equation}
where $\delta(x) =$dist$(x,\pa S)$ for $x\in S$ and the distance is computed using geodesic length on $S$. The space $H^{\half}_{00}$ has the property that, if $S$ is seen as an open patch of a closed Lipschitz surface $\Gamma$, then extension by zero to $\Gamma$ is a bounded operator from $H^{\half}_{00}(S)$ to $H^{\half}(\Gamma)$. Furthermore, elements of $H^{\half}_{00}(S)$ have trace zero on $\pa S$. We refer to \cite[Section 2]{ABM23} for further discussion on this point.

Finally, if $f$ defined a.e. on $\Omega$ is regular enough to have a non-tangential limit on $S$, we let $[f]_S$ denotes the jump on $S$, defined as:
\begin{equation*}
    [f]_S =f_+-f_-,
\end{equation*}
where $f_\pm$ is the limit taken non tangentially in $\Omega^\pm$, respectively. 

In the regime of creeping faults, where notable slip occurs without earthquakes, a quasi-static approximation is justified. Therefore, we model the Earth's Crust as a linearly elastic but inhomogeneous material with time-independent elastic parameters. The elastic response is encoded in a fourth-order tensor $\CC$, the elasticity tensor, which we assume to be totally symmetric (that is, the material is hyperelastic) and uniformly Lipschitz continuous in $\Omega$:
\begin{equation} \label{eq:LameCoeffLipschitzCond}
   \CC \in W^{1,\infty}(\Omega).
\end{equation}
This regularity assumption is not optimal in the context of geophysical applications, since the Earth's Crust is typically modeled as a layered medium, where material properties can vary abruptly from one layer to the next. However, Lipschitz regularity is needed to compute the shape derivative and for the solvability of the forward problem in some cases. We discuss this point further later in this section. 

We also impose the standard (uniform) strong convexity conditions:
\begin{equation} \label{eq:StrongConvexCond}
	\mathbb{C}(\bm{x})\widehat{\mathbf{A}}:\widehat{\mathbf{A}}\geq \gamma
	|\widehat{\mathbf{A}}|^{2}, \qquad \textrm{a.e in}\,\, \Omega,
\end{equation}
for all symmetric matrices $\mathbf{A}$ for some constant $\gamma>0$ , which guarantees that the energy functional is strictly positive.

The forward or direct fault problem consists of finding the elastic displacement $\bu$ that solves the homogeneous elastostatics system subject to suitable boundary conditions on $\pa\Omega$ and suitable transmission conditions on $S$. On the part of $\pa\Omega$ that lies on the surface of the Earth $\pa\Omega\setminus \Sigma$, it is appropriate to assume that there is no elastic load, hence $\pa\Omega\setminus \Sigma$ is traction-free. The traction is the normal component of the stress $\boldsymbol\upsigma=\CC\widehat{\nabla}\bu$, with $\widehat{\nabla}$ the symmetric part of the gradient, and represents the elastic force acting on surfaces. 

On the buried part $\Sigma$, there are several boundary conditions that can be imposed. For simplicity, we choose to impose homogeneous Dirichlet conditions, that is, $\Sigma$ is displacement-free. This choice is physically motivated by our assumption that the fault is at a positive distance from the boundary, so the boundary is undisturbed by the slip on the fault. For numerical purposes, absorbing boundary conditions could also be considered.

On the fault surface $S$, we assume continuity of the traction, while there is a jump in the displacement, encoding the slippage of the rock on the two sides of the fault, given by a vector field $\bg$ on $S$, the Burger's vector. The slip needs not be tangential to $S$ and can be oriented arbitrarily.

The displacement $\bu$ must, hence, satisfy the following mixed-boundary-value-interface problem:
\beq \label{eq:ForwardProblem}
  \begin{cases}
     \dive(\CC \nablahat \bu) = \bzero, & 	 \textrm{in}\,\,
			\Omega \setminus \overline{S},\\
			(\mathbb{C}\widehat{\nabla}  \bu)\bm{\nu}=\bm{0}, & \textrm{on}\,\, \partial \Omega \setminus \Sigma, \\
			  \bu=\bm{0}, & \textrm{on}\,\, \Sigma,\\
			[  \bu]_{S}=\bm{g}, \quad 
			[(\mathbb{C}\widehat{\nabla}  \bu)\bm{n}]_{S}=\bm{0}.
  \end{cases}
\eeq
The strong convexity condition \eqref{eq:StrongConvexCond} implies strong ellipticity for the elastostatics operator $\dive \CC\nablahat$. Since we are interested in domains of polyhedral type, elliptic regularity does not hold in standard Sobolev spaces and, consequently, problem \eqref{eq:ForwardProblem} must be intended in a weak or very weak sense, depending on the regularity of the slip $\bg$. 

As a matter of fact, if $\bg$ does not vanish at the boundary of $S$, i.e., if $\bg\in H^\half(S)$, but not in $H^\half_{00}(S)$, then singularities develop at $\pa S$, which precludes any variational formulation. Instead, owing to the Lipschitz regularity of the elastic coefficients,  a very weak solution to \eqref{eq:ForwardProblem} of can be constructed by duality, viewing the mixed-boundary-value interface problem as a singular source problem in the whole of $\Omega$ with source in $H^{-3/2-\ep}$, $\ep >0$. For a precise definition of very weak solution and more details on the dual formulation of problem \eqref{eq:ForwardProblem}, we refer to \cite{ABMdH19}. There the domain is the lower half space and weights must be imposed to ensure integrability at infinity. However, essentially the same arguments apply in bounded domains, without the need of weights. It can be shown using a layer potential representation that the solution is in $H^{1-\ep}(\Omega\setminus {\overline{S}})$ for any $\ep>0$. Hence, the homogeneous Dirichlet condition on $\Sigma$ holds in trace sense. We recall the following well-posedness results for very weak solutions (see \cite[Theorem 3.11]{ABMdH19}). 

\begin{theorem} \label{t:VeryWeakSol}
    There exists a very weak solution $\bu \in H^{\half-\ep}(\Omega)\cap 
    H^{1-\ep}_\Sigma(\Omega \setminus {\overline{S}})$ to problem \eqref{eq:ForwardProblem} for any $\bg \in H^\half(S)$ and any $\ep>0$.
\end{theorem}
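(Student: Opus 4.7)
My plan is to follow the transposition (very weak) method and adapt the half-space argument in \cite{ABMdH19} to the bounded Lipschitz setting, where the weights needed to control integrability at infinity in the half-space case are no longer required. The first step is to make precise the notion of very weak solution: the jump conditions on $S$ can be encoded as a singular source supported on $S$, so that \eqref{eq:ForwardProblem} becomes $\dive(\CC\nablahat\bu)=\bF$ in $\cD'(\Omega)$ for a functional $\bF \in H^{-3/2-\ep}(\Omega)$ depending linearly on $\bg$, together with the mixed homogeneous boundary conditions on $\pa\Omega$. A very weak solution is then characterized by an integration-by-parts identity against test functions solving an adjoint problem.

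Next I would set up the adjoint. For $\bm{f}$ in a dense test class, consider
\begin{equation*}
-\dive(\CC \nablahat \bw) = \bm{f} \;\text{in } \Omega, \qquad \bw = \bzero \; \text{on } \Sigma, \qquad (\CC \nablahat \bw)\bnu = \bzero \; \text{on } \pa\Omega \setminus \Sigma.
\end{equation*}
The strong convexity condition \eqref{eq:StrongConvexCond} together with Korn's inequality yields coercivity on $H^1_\Sigma(\Omega)$, so Lax--Milgram produces a unique $\bw \in H^1_\Sigma(\Omega)$. Because $\CC$ is Lipschitz by \eqref{eq:LameCoeffLipschitzCond} and $S$ sits at positive distance from $\pa\Omega$, interior elliptic regularity promotes $\bw$ to $H^{3/2+\ep}_{\mathrm{loc}}$ in a neighborhood of $\overline{S}$, and the traction $(\CC \nablahat \bw)\bn$ admits a well-defined trace on $S$ belonging to a space that embeds into $L^2(S)$.

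With this in hand, I would define $\bu$ by transposition through $\langle \bu, \bm{f}\rangle_{\Omega} \coleq \langle \bg, (\CC \nablahat \bw(\bm{f}))\bn \rangle_S$. The right-hand side pairs $\bg \in H^{\half}(S)$ against an $L^2$-type traction, so it is well defined, linear, and continuous in $\bm{f} \in H^{-\half+\ep}(\Omega)$; Riesz representation then delivers $\bu \in H^{\half-\ep}(\Omega)$ as claimed. Away from $\overline{S}$, the higher regularity is obtained by writing $\bu$ as a double-layer potential with density $\bg$, built from a Green's function of the Lipschitz-coefficient elasticity operator on $\Omega$. Standard layer potential mapping properties and jump relations then furnish $\bu \in H^{1-\ep}_{\Sigma}(\Omega\setminus\overline{S})$, the homogeneous Dirichlet condition on $\Sigma$ in trace sense, continuity of the traction across $S$, and the prescribed jump $[\bu]_S = \bg$.

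The main obstacle is the layer-potential step: constructing and analyzing a Green's function for the mixed Dirichlet--Neumann problem on a Lipschitz polygonal domain with variable Lipschitz coefficients, and obtaining sharp fractional mapping properties of the associated single- and double-layer operators up to $\pa S$, where $\bg$ need not vanish. The transposition and interior-regularity steps are, by comparison, more standard once this representation is available.
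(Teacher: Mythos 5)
Your proposal follows essentially the same route the paper indicates for Theorem \ref{t:VeryWeakSol}: the paper does not reprove it but cites \cite{ABMdH19} and sketches exactly this strategy --- recasting the transmission conditions as a singular source in $H^{-3/2-\ep}$, constructing $\bu$ by duality/transposition against the adjoint mixed problem, and obtaining the $H^{1-\ep}_\Sigma(\Omega\setminus\overline{S})$ regularity via a layer potential representation, with the only adaptation from the half-space setting being that no weights at infinity are needed. Your outline, including the adjoint solvability by Lax--Milgram, the interior regularity near $S$ from the Lipschitz coefficients, and the identification of the layer-potential step as the main technical burden, is consistent with that argument.
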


By contrast, when $\bg\in H^\half_{00}(S)$, extending the slip by zero to $\Gamma$, yields a standard non-homogeneous interface problem that admits a variational solution in $H^1(\Omega\setminus \overline{S})$, which only requires the elastic parameters to be bounded uniformly. We recall the weak formulation that will be the starting point to compute the shape derivative.
Below, $\bgtilde\in H^\half(\Gamma)$ denotes the extension of $\bg\in H^\half_{00}(S)$ by zero. 
We recall the following standard lemma (see e.g. \cite{ABM23}, Lemma 3.3, and references therein):

\begin{lemma}\label{lem: Htilde}
The following identification holds:
\begin{equation} \label{eq:Htilde}
		H^1(\Omega\setminus\overline{S})\cong	\widetilde{H}:=\Big\{f\in L^2(\Omega):\,\, f\lfloor_{\Omega^+}\in
H^1(\Omega^+),\, f\lfloor_{\Omega^-}\in H^1(\Omega^-),
\textup{\textrm{and}}\,\, [f]_{\Gamma\setminus\overline{S}}=0   \Big\}.
\end{equation}
\end{lemma}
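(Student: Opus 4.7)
The plan is to prove the identification by establishing both inclusions through the natural identification of a function on $\Omega\setminus\overline{S}$ with its pair of restrictions to $\Omega^\pm$, and by exhibiting the piecewise gradient as the weak gradient in the reverse direction.

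\textbf{Forward inclusion} $H^1(\Omega\setminus\overline{S})\hookrightarrow\widetilde{H}$. If $f\in H^1(\Omega\setminus\overline{S})$, then $f\in L^2(\Omega)$ (as $\overline{S}$ has zero Lebesgue measure), and since $\Omega^\pm\subset\Omega\setminus\overline{S}$ are open, the restrictions $f\lfloor_{\Omega^\pm}$ are in $H^1(\Omega^\pm)$. For the jump condition, fix $x_0\in\Gamma\setminus\overline{S}$. Because $\overline{S}$ is closed in $\Gamma$, there is an open ball $U\subset\Omega\setminus\overline{S}$ containing $x_0$ with $U\cap\overline{S}=\emptyset$. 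On $U$, which is intersected by $\Gamma$ along a Lipschitz graph, the function $f$ is a single $H^1(U)$ function and hence has a unique trace on $U\cap\Gamma$; by the standard trace theorem applied in $U\cap\Omega^\pm$, this trace coincides with $f\lfloor_{\Omega^\pm}$ on $U\cap\Gamma$. Covering $\Gamma\setminus\overline{S}$ by such neighborhoods shows $[f]_{\Gamma\setminus\overline{S}}=0$.

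\textbf{Reverse inclusion} $\widetilde{H}\hookrightarrow H^1(\Omega\setminus\overline{S})$. Given $f\in\widetilde{H}$, define the piecewise gradient $\bm{g}\in L^2(\Omega;\mathbb{R}^d)$ by $\bm{g}\lfloor_{\Omega^\pm}=\nabla(f\lfloor_{\Omega^\pm})$. Pick any test field $\varphi\in C_c^\infty(\Omega\setminus\overline{S})$; its support $K$ is compact in $\Omega\setminus\overline{S}$, hence $K\cap\overline{S}=\emptyset$ and $K\cap\partial\Omega=\emptyset$. Applying the divergence theorem separately in the Lipschitz domains $\Omega^\pm$ (whose boundaries consist of pieces of $\Gamma$ and of $\partial\Omega$), one obtains
\begin{equation*}
\int_{\Omega\setminus\overline{S}} f\,\partial_i\varphi\,dx = -\int_{\Omega\setminus\overline{S}} g_i\,\varphi\,dx + \int_{\Gamma}[f]_\Gamma\,\varphi\,n_i\,d\sigma,
\end{equation*}
where the boundary contributions on $\partial\Omega$ drop out because $\varphi$ vanishes there. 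Since $\mathrm{supp}\,\varphi$ meets $\Gamma$ only inside $\Gamma\setminus\overline{S}$, the interface integral reduces to an integral over $\Gamma\setminus\overline{S}$, where $[f]_\Gamma=0$ by the definition of $\widetilde{H}$. This proves $\bm{g}$ is the distributional gradient of $f$ in $\Omega\setminus\overline{S}$, so $f\in H^1(\Omega\setminus\overline{S})$. The $H^1$-norm of $f$ coincides with the sum of the $H^1$-norms on $\Omega^\pm$, giving the topological isomorphism.

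\textbf{Main obstacle.} The only non-routine point is to make sure that, when integrating by parts separately in $\Omega^+$ and $\Omega^-$, the boundary integral on $\Gamma$ localizes to $\Gamma\setminus\overline{S}$; this requires using that $\mathrm{supp}\,\varphi$ is a positive distance away from $\overline{S}$, a consequence of the compact-support hypothesis in the open set $\Omega\setminus\overline{S}$. Beyond that, the argument rests only on the Lipschitz regularity of $\Gamma$ (so that the trace theorem applies in $\Omega^\pm$) and on standard density/approximation of $H^1$ functions by smooth ones, which is why no regularity beyond boundedness of $\mathbb{C}$ is needed here.
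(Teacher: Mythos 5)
Your proof is correct and is the standard argument: restriction plus matching of two-sided traces on $\Gamma\setminus\overline{S}$ for one inclusion, and the piecewise gradient together with elementwise integration by parts (with the interface term localizing away from $\overline{S}$ and killed by the zero-jump condition) for the other. The paper itself offers no proof to compare against --- it states this as a standard lemma and defers to \cite{ABM23}, Lemma 3.3 --- so there is nothing further to reconcile; the only cosmetic points are that the sign of your interface term depends on the orientation of $\bn$ (immaterial, since the term vanishes) and that the jump condition in $\widetilde{H}$ should be read in the Sobolev-trace sense, as you implicitly do.
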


Then, $\bu\in H^1_\Sigma(\Omega\setminus \overline{S})$ is a weak solution of 
the mixed-boundary-value-interface problem \eqref{eq:ForwardProblem} if, for all $\bv \in H^1_\Sigma(\Omega)$:
\beq \label{eq:WeakForm}
  a_\Gamma(\bu,\bv):= \int_{\Omega\setminus\Gamma} \CC\nablahat \bu \vvdot\nablahat \bv \, d\bx = 0, 
\eeq
and $[\bu]_{\Gamma}=\bgtilde$. We note that by the symmetry properties of $\CC$, the bilinear form $a$ defined above can be written equivalently as:
\begin{equation*} 
    a_\Gamma(\bu,\bv)= \int_{\Omega\setminus\Gamma} \CC\nabla \bu \vvdot\nabla \bv \, d\bx= \int_{\Omega\setminus\Gamma} \CC\nabla \bv \vvdot\nabla \bu \, d\bx.
\end{equation*}
We refer the reader to \cite{ABM23} for more details and a proof of the following well-posedness result (see \cite[Theorems 3.5 and 3.6]{ABM23}).

\begin{theorem} \label{t:WeakSol}
    There exists a unique weak solution $\bu \in H^1_\Sigma(\Omega\setminus {\overline{S}})$ to problem \eqref{eq:ForwardProblem}, which depends continuously on the slip vector $\bg\in H^\half_{00}(S)$.
\end{theorem}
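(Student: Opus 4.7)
The plan is to reduce the non-homogeneous interface problem \eqref{eq:ForwardProblem} to a homogeneous one via a suitable lifting of the slip, and then apply the Lax--Milgram theorem on $H^1_\Sigma(\Omega)$. This is the standard route for transmission problems, and the key point is that the hypothesis $\bg\in H^\half_{00}(S)$ is exactly what makes it work: extension by zero produces a trace in $H^\half$ of a closed Lipschitz surface, which can then be lifted to $H^1$.

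First, I would exploit the property of $H^\half_{00}(S)$ recalled before Lemma~\ref{lem: Htilde}: the extension $\bgtilde$ of $\bg$ by zero from $S$ to $\Gamma$ lies in $H^\half(\Gamma)$ with $\|\bgtilde\|_{H^\half(\Gamma)}\leq C\|\bg\|_{H^\half_{00}(S)}$. Since $\Gamma=\pa\Omega^-$ is Lipschitz, the trace operator $H^1(\Omega^-)\to H^\half(\Gamma)$ admits a bounded right inverse; pick $\bu_0^-\in H^1(\Omega^-)$ with trace $-\bgtilde$ on $\Gamma$, and set $\bu_0=\bzero$ on $\Omega^+$ and $\bu_0=\bu_0^-$ on $\Omega^-$. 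Then $[\bu_0]_\Gamma=\bgtilde$, and since $\bgtilde$ vanishes on $\Gamma\setminus\overline{S}$, Lemma~\ref{lem: Htilde} yields $\bu_0\in H^1(\Omega\setminus\overline{S})$. Because $\Gamma$ lies at positive distance from $\pa\Omega$, a smooth cutoff supported away from $\Sigma$ can be applied to ensure additionally $\bu_0|_\Sigma=\bzero$ without worsening the bound $\|\bu_0\|_{H^1(\Omega\setminus\overline{S})}\leq C\|\bg\|_{H^\half_{00}(S)}$.

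Writing $\bu=\bu_0+\bw$ reduces the problem to finding $\bw\in H^1_\Sigma(\Omega)$ (no jump across $\Gamma$, zero trace on $\Sigma$) such that
\begin{equation*}
    a_\Gamma(\bw,\bv)=-a_\Gamma(\bu_0,\bv) \qquad \text{for all } \bv\in H^1_\Sigma(\Omega).
\end{equation*}
Continuity of $a_\Gamma$ on $H^1_\Sigma(\Omega)\times H^1_\Sigma(\Omega)$ is immediate from $\CC\in W^{1,\infty}(\Omega)$. Coercivity follows by combining strong convexity \eqref{eq:StrongConvexCond}, which gives $a_\Gamma(\bw,\bw)\geq\gamma\|\nablahat\bw\|_{L^2(\Omega)}^2$, with Korn's second inequality on $H^1_\Sigma(\Omega)$ (valid because $\Sigma$ has positive surface measure) and Poincaré's inequality (valid because $\bw$ vanishes on $\Sigma$), yielding $a_\Gamma(\bw,\bw)\geq c\|\bw\|_{H^1(\Omega)}^2$. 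The right-hand side is a continuous linear functional of norm at most $C\|\bu_0\|_{H^1(\Omega\setminus\overline{S})}$, and Lax--Milgram supplies a unique $\bw$ with $\|\bw\|_{H^1}\leq C\|\bg\|_{H^\half_{00}(S)}$. Setting $\bu=\bu_0+\bw$ and unwinding the lifting gives existence, uniqueness, and the stated continuous dependence; uniqueness within $H^1_\Sigma(\Omega\setminus\overline{S})$ also follows directly, since two solutions differ by an element of $H^1_\Sigma(\Omega)$ annihilating $a_\Gamma$ on the diagonal, which the Korn--Poincaré bound forces to vanish.

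The main obstacle is not any one estimate but the lifting construction itself, which must simultaneously: (i) realize the prescribed jump $\bgtilde$ across $\Gamma$; (ii) lie in $H^1$ off of $\overline{S}$ rather than merely off of $\overline{\Gamma}$; and (iii) satisfy the homogeneous Dirichlet condition on $\Sigma$. Requirements (i) and (ii) are reconciled by the identification in Lemma~\ref{lem: Htilde} together with the fact that $\bgtilde\equiv\bzero$ on $\Gamma\setminus\overline{S}$, which in turn is the payoff of working in $H^\half_{00}(S)$ rather than $H^\half(S)$; requirement (iii) is then arranged by a cutoff, using crucially the positive distance between $\Gamma$ and $\pa\Omega$. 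Once this lifting is in hand, the remainder of the proof is standard functional analysis.
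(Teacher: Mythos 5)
Your proposal is correct and follows essentially the same route the paper takes (and defers to \cite[Theorems 3.5 and 3.6]{ABM23}): extend $\bg$ by zero to $\bgtilde\in H^{1/2}(\Gamma)$ using the defining property of $H^{1/2}_{00}(S)$, lift the jump to an $H^1(\Omega\setminus\overline{S})$ function via Lemma~\ref{lem: Htilde}, and solve the resulting homogeneous variational problem by Lax--Milgram with coercivity from strong convexity, Korn, and Poincar\'e. The only cosmetic remark is that your cutoff near $\Sigma$ is unnecessary, since your lift already vanishes identically on $\Omega^+$, the component touching $\pa\Omega$.
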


While assuming that $\bg$ vanishes at $\pa S$ is more physically justified (only a patch on a fault is typically active), having singularities at the boundary of $S$ helps with the inverse problem of recovering the shape and location of $S$. We consider both cases in the numerical experiments.

\begin{remark}
We present a detailed derivation of the shape derivative in the case $\bg\in H^\half_{00}(S)$, since the variational setting is the best suited for numerical applications. However, the boundary shape derivative can also be obtained using a layer potential representation of the elastic displacement $\bu$, which we established in \cite{ABMdH19}, following the the approach used in \cite{BFV2008}. This derivation applies to any slip field $\bg \in H^\half(S)$, including the case of constant slip.
\end{remark}

We close this section by recalling two uniqueness results for the inverse problem of determining the fault $S$ and the slip vector $\bg$ from surface measurements of the displacement $\bu$ on an open subset $\Xi \subset \pa \Omega \setminus \Sigma$, henceforth called the acquisition manifold. The first result is in the setting of Theorem \ref{t:VeryWeakSol}, while the second is in the setting of Theorem \ref{t:WeakSol}. The two results differ in the a priori assumptions imposed on $S$ and on $\bg$, which are more stringent in the first case.

\begin{theorem}[Theorem 5.1 in \cite{ABMdH19}] \label{t:uniq.IP.veryweak}
Let $S_1, \, S_2$ be two piecewise-linear fault surfaces that in addition are graphs with respect to the same coordinate system.  Let $\bm{g}_i$ be bounded tangential fields in $ H^{1/2}(S_i)$ with \textup{supp}$\,
	\bm{g}_i=\overline{S}_i$, for $i=1,2$. Let $\bu_i$, for $i=1,2$, be the unique very weak solution of \eqref{eq:ForwardProblem}  corresponding to $\bm{g}=\bm{g}_i$ and $S=S_i$. 
	 If $\bu_{1\big|_{\Xi}}=\bu_{2\big|
		_{\Xi}}$,  then $S_1=S_2$ and $\bm{g}_1=\bm{g}_2$.
\end{theorem}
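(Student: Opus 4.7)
The plan is a proof by contradiction driven by unique continuation, combined with the geometric rigidity supplied by the graph and piecewise-linear hypotheses on the faults and by the full-support condition $\textup{supp}\,\bg_i=\overline{S}_i$.

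First, set $\bw:=\bu_1-\bu_2$ and let $D$ denote the connected component of $\Omega\setminus(\overline{S_1}\cup\overline{S_2})$ whose closure meets $\Xi$. By interior elliptic regularity, granted by the Lipschitz hypothesis on $\CC$, $\bw$ is a classical solution of the homogeneous elastostatics system in $D$. On $\Xi$ the measurement hypothesis yields $\bw=\bzero$, while the traction-free condition on $\pa\Omega\setminus\Sigma\supset\Xi$ yields $(\CC\nablahat\bw)\bnu=\bzero$. A unique continuation principle for the elastostatics system with Lipschitz coefficients (of Aronszajn--Carleman type) then forces $\bw\equiv\bzero$ throughout $D$.

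Next, assume for contradiction $S_1\neq S_2$ and write each $S_i$ as the graph of a piecewise-linear function $h_i$ over a polyhedral base $V_i$ in the common hyperplane. Piecewise linearity provides a point $x_0'$ and a polyhedral neighborhood $U$ of $x_0'$ on which, after relabeling, $h_1$ is affine and either $h_1>h_2$ throughout $U$ or $h_2$ is undefined on $U$. Shrinking $U$ one obtains a flat piece $T:=\{(x',h_1(x')):x'\in U\}\subset S_1$ such that $\overline{T}\cap\overline{S_2}=\emptyset$ and such that the one-sided strip just above $T$ in the graph direction is contained in $D$. Because $\overline{T}$ is separated from $\overline{S_2}$, the displacement $\bu_2$ extends smoothly across $T$ as a solution $\bv$ of the elastostatics system in a full neighborhood $N$ of $T$.

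The previous step gives $\bu_1=\bv$ in $N\cap D$, hence, passing to non-tangential traces, on $T$ from the $D$-side. Combining this identity with the transmission conditions $[\bu_1]_T=\bg_1|_T$ and with the smoothness of $\bv$ across $T$, one obtains on the opposite side of $T$ the non-tangential limit $\bw=-\bg_1|_T$, while matching of the tractions across $T$ yields $(\CC\nablahat\bw)\bn=\bzero$ from the same side. The contradiction is then closed by constructing an auxiliary very weak displacement equal to $\bu_1$ away from a neighborhood of $T$ and to $\bv$ across $T$, which solves the forward problem associated with the truncated fault--slip pair $(S_1\setminus T,\bg_1\chi_{S_1\setminus T})$ and which produces the same surface measurements as $\bu_1$. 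The uniqueness part of Theorem~\ref{t:VeryWeakSol} applied to the truncated data then forces $\bg_1\equiv\bzero$ on $T$, contradicting $\textup{supp}\,\bg_1=\overline{S}_1$. Hence $S_1=S_2=:S$; a further application of unique continuation in $\Omega\setminus\overline{S}$ yields $\bu_1=\bu_2$ there, and the transmission condition gives $\bg_1=[\bu_1]_S=[\bu_2]_S=\bg_2$.

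The main anticipated obstacle is this final step: the patching of $\bu_1$ and $\bv$ is naturally only \emph{local} to a neighborhood of $T$, whereas what is needed is a comparison extending across the whole component on the opposite side of $T$, compatible with the homogeneous Dirichlet condition on $\Sigma$ and with the transmission conditions on the surviving part $S_1\setminus T$. The graph and piecewise-linear hypotheses are indispensable here, as they allow a single affine cell of $S_1$ to be excised without disturbing the connectivity of $\Omega\setminus S_1$, so that the uniqueness statement of Theorem~\ref{t:VeryWeakSol} can be cleanly applied; the hypothesis $\textup{supp}\,\bg_i=\overline{S_i}$ then converts the conclusion ``$\bg_1=\bzero$ on a flat piece of $S_1$'' into an honest contradiction.
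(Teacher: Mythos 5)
First, a point of reference: the paper does not prove this statement at all --- it is imported verbatim as Theorem 5.1 of \cite{ABMdH19} --- so there is no internal proof to compare against, and your attempt must stand on its own. Your opening moves are consistent with the standard strategy: unique continuation from the Cauchy data on $\Xi$ (the measured displacement together with the traction-free condition) to get $\bu_1=\bu_2$ on the component $D$ of $\Omega\setminus(\overline{S_1}\cup\overline{S_2})$ reaching $\pa\Omega$, followed by a contradiction built on a flat piece $T\subset S_1$ with $\overline{T}\cap\overline{S_2}=\emptyset$. Two caveats already here: unique continuation for the elastostatics system is known only for special classes of $\CC$ (e.g.\ isotropic Lam\'e), not for a general strongly convex $\CC\in W^{1,\infty}$, so invoking an ``Aronszajn--Carleman'' principle for the system as stated is not automatic; and the accessibility of the strip above $T$ from $D$ genuinely requires the graph hypothesis to be spelled out, since that strip may lie in a pocket enclosed between $S_1$ and $S_2$.

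The genuine gap is the closing step. The patched field ``equal to $\bu_1$ away from a neighborhood of $T$ and to $\bv$ across $T$'' is not well defined: $\bu_1$ itself jumps across $T$ by $\bg_1\lfloor_T$, so any modification that removes that jump while agreeing with $\bu_1$ outside a neighborhood of $T$ necessarily creates a new discontinuity on the hidden side of $T$ --- unless $\bu_1=\bv$ there, which is exactly the assertion being proved. Even granting the construction, uniqueness of the \emph{forward} problem for the truncated pair $(S_1\setminus T,\,\bg_1\chi_{S_1\setminus T})$ cannot yield $\bg_1\lfloor_T=\bzero$: forward uniqueness compares two solutions for the \emph{same} fault--slip data, whereas you would be comparing two different fault--slip pairs producing the same surface measurement --- that is the inverse uniqueness being proved, so the step is circular. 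What your argument actually delivers on the hidden side of $T$ is a solution $\bw=\bu_1-\bv$ of the homogeneous system with Cauchy data $\bw=-\bg_1$ and $(\CC\nablahat\bw)\bn=\bzero$ on $T$; such data do \emph{not} force $\bg_1=\bzero$, since zero traction with nonzero displacement on a flat piece is perfectly possible. Closing the argument is precisely where the hypotheses you never use must enter: either one shows $T$ can be reached from $D$ on \emph{both} sides (e.g.\ around the edge $\pa S_1$), so that $\bg_1=[\bu_1]_T=[\bu_2]_T=\bzero$ directly, or one runs an energy/rigid-motion argument in the pocket between the two faults, where the tangentiality and boundedness of $\bg_i$ and the corners of the piecewise-linear graph (the paper itself notes that the fault must contain at least one corner singularity for uniqueness to hold) are what exclude a nontrivial slip. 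The absence of these ingredients from your proof is the symptom of the missing mechanism.
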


The conditions on $S_i$ and $\bg_i$ in the theorem above can be weakened somewhat. 

\begin{theorem}[Theorem 4.1 in \cite{ABM23}] \label{t:uniq.IP.weak}
Let $S_1,S_2$ be two fault surfaces that in addition are graphs with respect to the same coordinate system. Let $\bm{g}_i\in
H^{\frac{1}{2}}_{00}(S_i)$, for $i=1,2$, with \textup{Supp}$\,
	\bm{g}_i=\overline{S}_i$, for $i=1,2$, and let $  \bu_i$, for $i=1,2$, be the  unique weak solution of \eqref{eq:ForwardProblem} corresponding to $\bm{g}=\bm{g}_i$ and $S=S_i$.
	If $\bu_{1\big|_{\Xi}}=\bu_{2\big|
		_{\Xi}}$,  then $S_1=S_2$ and $\bm{g}_1=\bm{g}_2$.
\end{theorem}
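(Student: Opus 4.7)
The plan is to combine strong unique continuation for the Lamé system with a topological argument exploiting the graph hypothesis, and then to recover the slip directly from its definition as the displacement jump.

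First, I would set $\bw := \bu_1 - \bu_2$ on the open set $G := \Omega \setminus (\overline{S}_1 \cup \overline{S}_2)$. By problem \eqref{eq:ForwardProblem}, $\dive(\CC \nablahat \bw) = \bzero$ in $G$ in the weak sense, and $(\CC \nablahat \bw)\bnu = \bzero$ on $\pa\Omega \setminus \Sigma \supset \Xi$, while the hypothesis $\bu_1|_\Xi = \bu_2|_\Xi$ gives $\bw|_\Xi = \bzero$. Since the coefficients are Lipschitz by \eqref{eq:LameCoeffLipschitzCond}, the Lamé operator enjoys strong unique continuation across any open piece of $\pa\Omega$ on which both Cauchy data vanish, so $\bw \equiv \bzero$ on the connected component $G_0$ of $G$ whose closure meets $\Xi$.

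Next, I would prove $S_1 = S_2$ by contradiction. Because each $S_i$ is a graph of a Lipschitz function over the same hyperplane $H$, it splits $\Omega$ into an ``upper'' part (the one containing $\Xi$) and a ``lower'' part, and every point lying above both graphs sits in the same path-connected subset of $G$, namely $G_0$. Assuming $S_1 \neq S_2$, up to relabeling there is an open set $H' \subset H$ over which $S_1$ lies strictly above $S_2$; this yields an open subarc $\Sigma_{12} \subset S_1 \setminus \overline{S}_2$ admitting a two-sided tubular neighborhood that remains above $S_2$, and hence is contained in $G_0$ away from $S_1$ itself. On this tubular neighborhood $\bw = \bzero$, so $[\bu_1]_{\Sigma_{12}} = [\bu_2]_{\Sigma_{12}}$. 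However $\Sigma_{12} \cap \overline{S}_2 = \emptyset$, so interior elliptic regularity for $\bu_2$ (again using the Lipschitz coefficients) makes $\bu_2$ continuous across $\Sigma_{12}$, giving $[\bu_2]_{\Sigma_{12}} = \bzero$. Therefore $\bg_1 = [\bu_1]_{S_1} \equiv \bzero$ on the open set $\Sigma_{12} \subset S_1$, contradicting $\mathrm{supp}\,\bg_1 = \overline{S}_1$.

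With $S_1 = S_2 =: S$, unique continuation as above propagates $\bu_1 = \bu_2$ to the connected component of $\Omega \setminus \overline{S}$ accessible from $\Xi$; combined with continuity of the traction $[\CC\nablahat\bu_i]_S = \bzero$ and Lemma~\ref{lem: Htilde} applied in the weak formulation \eqref{eq:WeakForm}, this identity extends to the opposite side of $S$ as well, so taking the jump yields $\bg_1 = [\bu_1]_S = [\bu_2]_S = \bg_2$. The main obstacle is the geometric step of extracting the open subarc $\Sigma_{12}$ with a two-sided neighborhood in $G_0$: without the graph hypothesis one fault could enclose a portion of the other and shield it from $\Xi$, preventing $G_0$ from touching both sides of the differing pieces and blocking the contradiction with the support assumption. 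The graph condition is precisely what rules this pathology out, since $S_1 \triangle S_2$ is then the graph-difference of two Lipschitz functions over a common hyperplane and necessarily opens on both sides to an accessible slab.
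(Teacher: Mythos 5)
A preliminary remark: the paper does not prove this statement itself --- it is imported verbatim as Theorem 4.1 of \cite{ABM23} --- so the comparison below is with the proof given there (and the closely related Theorem 5.1 of \cite{ABMdH19}). Your overall architecture matches that proof: unique continuation from the vanishing Cauchy data on $\Xi$, a geometric argument exploiting the graph hypothesis to contradict $\mathrm{supp}\,\bg_i=\overline{S}_i$, and then linearity to recover the slip. The last step is fine (indeed simpler than you make it: once $S_1=S_2=S$, the open set $\Omega\setminus\overline{S}$ is connected, since one can pass around $\pa S$, so $\bu_1=\bu_2$ there and the jumps coincide).

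The genuine gap is in the contradiction step. You claim that the differing portion $\Sigma_{12}\subset S_1\setminus\overline{S}_2$ admits a \emph{two-sided} tubular neighborhood contained in the component $G_0$ accessible from $\Xi$, and you assert that the graph hypothesis guarantees this (``necessarily opens on both sides to an accessible slab''). That is false. Two Lipschitz graphs over the same hyperplane can enclose a lens: take graph functions with $\varphi_1>\varphi_2$ on an open set $\pi'$ of the projection and $\varphi_1=\varphi_2$ on $\pa\pi'$. Then $\overline{S}_1\cup\overline{S}_2$ contains a closed surface, and the region between the two graphs is a component of $\Omega\setminus(\overline{S}_1\cup\overline{S}_2)$ that does \emph{not} meet $\Xi$. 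The lower side of your $\Sigma_{12}$ lies in this shielded component, where you know nothing about $\bw=\bu_1-\bu_2$; being ``above $S_2$'' does not put a point in $G_0$. Consequently you cannot conclude $[\bu_1]_{\Sigma_{12}}=[\bu_2]_{\Sigma_{12}}$, and the argument does not close. The proofs in \cite{ABM23,ABMdH19} supply precisely the idea you are missing: on a shielded component $D$, whose boundary is contained in $\overline{S}_1\cup\overline{S}_2$ (the surfaces being at positive distance from $\pa\Omega$), the transmission conditions together with $\bw=\bzero$ on the accessible side give vanishing traction for $\bw$ on $\pa D$; an energy identity then forces $\nablahat\bw=\bzero$ in $D$, so $\bw$ is an infinitesimal rigid motion whose trace on $\pa D$ is, up to sign, given by the slips. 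Since $\bg_i\in H^{1/2}_{00}(S_i)$ vanishes at $\pa S_i$, this rigid motion must vanish, whence $\bg_i\equiv\bzero$ on a relatively open subset of $S_i$ --- the desired contradiction with $\mathrm{supp}\,\bg_i=\overline{S}_i$. (The graph hypothesis is what keeps the geometry of $D$ and of $\pa D$ tame enough for this bookkeeping, not what prevents $D$ from existing.) A secondary caveat: you invoke strong unique continuation from Cauchy data for the elasticity system; for a general anisotropic $\CC\in W^{1,\infty}(\Omega)$ this is not available, and the cited theorem is proved in the isotropic (layered Lam\'e) setting, so that restriction should be made explicit.
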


We observe that a single measurement on $\Xi$ is sufficient for the unique determination of both the fault and the slip.
The assumption that surfaces are graphs with respect to a fixed, but arbitrary, coordinate system is not too restrictive in the geophysical context, where nearby faults tend to follow the same orientation. For numerical purposes, we are interested in polyhedral surfaces and, hence, both uniqueness theorems apply.

\section{The shape derivative} \label{s:shape}

In the rest of the paper we present an iterative reconstruction algorithm based on the minimization of a suitable misfit functional. For ease of notation, {\em we use $\lesssim$ to denote $\leq C$, where $C$ is a constant that may depend on the data, i.e., the Lipschitz norm of $\CC$ and the strong convexity constant $\gamma$, on $\Omega$, $S$, and $\bg$}, which are fixed in computing the derivative.

Let $\bu_m$ denote the measurement on the acquisition manifold $\Xi$. Assuming no measurement errors, $\bu_m$ agrees with the trace of the solution to the mixed-boundary-value-interface problem \eqref{eq:ForwardProblem} for a certain fault surface $S_0$ and slip vector $\bg_0$. We let $\cJ$ denote least-squares fitting cost functional:
\beq \label{eq:JfunctDef}
  \cJ(S,\bg) := \frac{1}{2} \int_\Xi |\bu_m(\bx) - \bu(\bx)|^2 \, d\sigma(\bx),
\eeq
where $\bu$ solves \eqref{eq:ForwardProblem} with fault surface $S$ and slip vector $\bg$ and $\sigma(\bx)$ denotes surface area. We remark that $\bg$ depends also on $S$ so we have slightly abused notation. However, since the inverse problem is linear in the slip vector if $S$ is fixed, the key step consists of reconstructing the fault surface $S$. Given the uniqueness results for the inverse problem and the well-posedness of the forward problem, $\cJ$ has a unique global minimum at $S=S_0$ and $\bg=\bg_0$.

To find this minimum, we propose a steepest descent algorithm based on a shape derivative, defined as the derivative of $\cJ$ with respect to infinitesimal movements of the fault $S$ and under infinitesimal changes of the slip vector $\bg$. Due to the ill-posedness of the inverse problem, we regularize the algorithm by minimizing only over piecewise polygonal or polyhedral faults.
We do not study the convergence of this algorithm theoretically, in part because we do not know the convexity properties of $\cJ$. Furthermore, there are measurement errors and we compute the solution of the forward problem numerically. We discuss the influence of these two type of errors in the section on the numerical tests.

In this section, we tackle the computation of the shape derivative in terms of a material derivative, which gives the change of the solution $\bu$ of  \eqref{eq:ForwardProblem} under infinitesimal changes of the fault $S$ and the slip $\bg$.
We encode the movement of the fault $S$ in terms of a bi-Lipschitz diffeomorphism $\phi_\tau:\overline{\Omega}\to \overline{\Omega}$, depending on a small parameter $\tau\in[0,1)$ with the following properties:
\begin{enumerate}
 \item $\phi_0=I$, and $\phi_\tau\lfloor_{\pa\Omega}=I$, where $I$ is the identity map;
 \item $\phi_\tau$ is linear in $\tau$.
\end{enumerate}
Since $S$ is at positive distance from $\pa\Omega$, there exists $\delta>0$ such that dist$(\phi_\tau(S),\pa \Omega)>\delta$ for all $\tau\in [0,1)$. Therefore, we can assume without loss of generality that:
\beq \label{eq:FaultDeformationDef}
    \phi_\tau := I +\tau \, \cU,
\eeq
where $\cU:\Omega\to \Omega$ is a Lipschitz map with support in a neighborhood $U$ of $\Gamma$. (We recall that $\Gamma$ is at positive distance from $\pa\Omega$, so $U$ always exists.) Then:
\begin{equation*}
  \frac{d \phi_\tau}{dt} =\cU, \quad D_{\bx} \phi_\tau = I+\tau D_{\bx} \cU.
\end{equation*}

Similarly, we encode the infinitesimal change of the slip $\bg$ in terms of a vector field $\bg_\tau$, given by:
\beq \label{eq:SlipDeformationDef}
  \bg_\tau  :=\bg + \tau \,  \bh,
\eeq
where $\bh \in H^\half_{00}(S)$, and again denote with $\bgtilde_\tau$ its extension by zero to $\Gamma$. The actual change in the slip is given by the composition of $\bg_\tau$ with $\phi_\tau^{-1}$. 

For notational convenience, we set:
\beq \label{eq:GammaTauDef}
     S_\tau:= \phi_\tau(S), \quad \Gamma_\tau:= \phi_\tau (\Gamma),
\eeq
so that $S_0\equiv S$ and $\Gamma_0\equiv \Gamma$.
We recall that we assume that $\CC$ is known in both the forward and the inverse problem, it is defined on the whole of $\overline\Omega$  and globally Lipschitz continuous.

We first obtain a distributed derivative, which can be computed in the framework of weak solutions of \eqref{eq:ForwardProblem}, assuming the slip $\bg \in H^\half_{00}(S)$. We hence work with $\Gamma_\tau$ and the extensions $\bgtilde_\tau$, $\bhtilde$ of the slip vectors $\bg_\tau$, $\bh$  by zero to $\Gamma_\tau$.

For ease of notation, we write \ $ \cJ(\tau) := \cJ(\Gamma_\tau,\bgtilde_\tau)$.
We can now define the {\em shape derivative} as the directional derivative:
\beq \label{eq:ShapeDerDef}
     D_{(\cU,\bhtilde)} \cJ(\Gamma_\tau,\bgtilde_\tau) := \frac{d \cJ(\tau)}{d\tau}\lfloor_{\tau=0}.
\eeq
Next, we let $\bu_\tau\in H^1_\Sigma(\Omega\setminus \overline{S_\tau})$ be the (weak) solution of the perturbed problem, which exists and is  unique by virtue of Theorem \ref{t:uniq.IP.weak}.:
\beq \label{eq:ForwardProblemPerturbed}
  \begin{cases}
     \dive(\CC \nablahat \bu_\tau) = \bzero, & 	 \textrm{in}\,\,
			\Omega \setminus \Gamma_\tau,\\
			(\mathbb{C}\widehat{\nabla}\bm{u_\tau})\bm{\nu}=\bm{0}, & \textrm{on}\,\, \partial \Omega \setminus \Sigma, \\
			  \bu_\tau=\bm{0}, & \textrm{on}\,\, \Sigma,\\
			[  \bu_\tau]_{\Gamma_\tau}=\bm{\bgtilde_\tau}\circ \phi^{-1}_\tau, \quad 
		[(\mathbb{C}\widehat{\nabla}\bu)\bm{n}]_{\Gamma_\tau}=\bm{0},
  \end{cases}
\eeq
or equivalently:
\begin{align}
    &a_{\Gamma_\tau}(\bu_\tau,\bv)= \int_{\Omega\setminus\Gamma_\tau} \CC\nabla_{\by} \bu_\tau \vvdot\nabla_{\by} \bv \, d\by =0, \qquad
    [\bu_\tau]_{\Gamma_\tau}=\bgtilde_\tau \circ \phi^{-1}_\tau, \nonumber
\end{align}
for every $\bv \in H^1_\Sigma(\Omega)$, where $\by=\phi_\tau(x)$.  

To compute the material derivative, we transform the perturbed problem \eqref{eq:ForwardProblemPerturbed} into a problem on the fixed domain $\Omega \setminus \overline{S}$. We then let 
\begin{equation*}
    \Breve{\bu}_\tau(\bx):= \bu_\tau\circ \phi_\tau(\bx), \quad \bx \in \Omega\setminus \overline{S}.
\end{equation*}
By the change of variable formula, which holds in Lipschitz domains, $\Breve{\bu}_\tau\in H^1_\sigma(\Omega\setminus \overline{S})$ satisfies the variational problem:
\beq \label{eq:WeakFormOriginalDomain}
  a_\tau(\Breve{\bu}_\tau,\bw):= \int_{\Omega\setminus \Gamma_\tau} \CC \left(\nabla( \Breve{\bu}_\tau\circ\phi^{-1}_\tau) \right)\vvdot  \left(\nabla( \bw\circ\phi^{-1}_{\tau})  \right)  \, d\by=
  \int_{\Omega\setminus \Gamma} \CC_\tau \nabla_{\bx} \Breve{\bu}_\tau [D\phi_\tau]^{-1}\vvdot  \nabla_{\bx} \bw [D\phi_\tau]^{-1}\, J(\phi_\tau)\, d\bx=0,
\eeq
where $J(\phi_\tau)=\det (D_{\bx}\Phi_\tau)$ is the Jacobian determinant, $\bw(\bx)= \bv(\phi(\bx))$, and 
\beq \label{eq:CCtauDef}
    \CC_\tau(\bx) := \CC(\phi_\tau(\bx)).
\eeq
Above we have again  used the symmetry properties of $\CC$. Additionally, since $\phi_\tau$ induces an automorphism on $H^1_\Sigma(\Omega)$, we have dropped the dependence on $\tau$ on the test function $\bw$. We stress that $\CC_\tau$ has the same symmetries and satisfies the same strong convexity condition, since they both hold pointwise uniformly in $\Omega$, but it is no longer an elasticity tensor.

It will be convenient to switch to a weak formulation with homogeneous transmission conditions, so as to have the same function space for solutions and test functions. To this end, we introduce a lift of the slip vector on $\Gamma$  using a Newtonian potential. Given any vector field $\bm{a}\in H^\half(\Gamma)$, we define its lift $\boldsymbol{\ell}_{\bm{a}}\in H^1(\Omega\setminus \Gamma)$ as any solution to the problem:
\beq \label{eq:liftDef}
 \begin{cases}
   \Delta \bell_{\bm{a}} = 0 \text{in } \Omega\setminus \Gamma, & \\
   [\bell_{\bm{a}}]_\Gamma ={\bm{a}}, \quad [\pa_{\bn} \bell_{\bm{a}}]_\Gamma= \bzero, &\,\\
   \text{Supp\,} \bell_{\bm{a}} \subset \Omega, & \,
 \end{cases}
\eeq
with $\pa_{\bn}=\bn \ccdot \nabla$ the normal derivative.
We can specify the lift uniquely by requiring that $\bell_{\bm{a}} \equiv 0$ on $\Omega \setminus U$, where $U$ is the neighborhood of $\Gamma$, for instance.
We let:
\beq \label{eq:BuBarDef}
    \buBar_\tau(\bx) := \Breve{\bu}_\tau(\bx) -\bell_{\tau \,  \bhtilde} (\bx), \quad \bx \in \Omega\setminus \Gamma.
\eeq
Then $\buBar_\tau \in H^1_\Sigma(\Omega\setminus\overline{S})$ and satisfies:
\beq \label{eq:BuBarWeakForm}
  a_\tau(\buBar_\tau,\bw)=-a_\tau(\bell_{\tau \,  \bhtilde},\bw), \qquad  [\buBar_\tau]_\Gamma =\bgtilde,
\eeq
since $[\Breve{\bu}_\tau]_{\Gamma}= \bgtilde_\tau=\bgtilde + \tau\bhtilde$.
Furthermore:
\begin{equation*}
     [\buBar_\tau-\bu]_{\Gamma}=\bgtilde-\bgtilde= 0 \ \Rightarrow \  \buBar_\tau-\bu \in H^1_\Sigma(\Omega).
\end{equation*}
We now define the {\em material derivative} $\buBarDot\in H^1_\Sigma(\Omega)$ as:
\beq \label{eq:BuBarDotDef}
    \buBarDot := \frac{d \buBar_\tau}{d\tau}\lfloor_{\tau=0} = \lim_{\tau \to 0^+} \frac{\buBar_\tau -\bu}{\tau},
\eeq 
where the convergence is strongly in $H^1_\Sigma(\Omega)$, if the limit exists. 

Before tackling the existence of the material derivative, we discuss some useful properties of $\CC$ and $\phi_\tau$. We recall that $\phi_\tau$ and  $\CC$ are  uniformly Lipschitz on $\Omega$.
Since $\phi_\tau =I +\tau \, \cU$, there exists  $\tau$ small enough, say $\tau\in [0,\tau_0]$, for some $\tau_0\ll 1$,  such that $\phi^{-1}_\tau \approx I-\tau \, \cU$  and 
\beq \label{eq:DphiTauDer}
  \lim_{\tau\to 0^+}\,\,\dfrac{D\phi_\tau^{-1}-I}{\tau}=- D\cU,\qquad \lim_{\tau\to 0^+} \,\,\dfrac{D\phi_\tau-I}{\tau}= D\cU,
\eeq
 in $L^\infty(\Omega)$. Similarly,  $J(\phi_\tau)\approx J(\phi^{-1}_\tau)\approx 1$ also for $\tau\in [0,\tau_0]$, and a straightforward calculation shows that
 \beq \label{eq:JphiTauDer}
   \lim_{\tau\to 0^+} \,\,\dfrac{J(\phi^{-1}_\tau)-1}{\tau}= - \dive \cU,\qquad \lim_{\tau\to 0^+} \,\,\dfrac{J(\phi_\tau)-1}{\tau}=\dive\cU,
 \eeq 
 also in $L^\infty(\Omega)$.
Additionally:
\begin{equation*}
    \|\CC_\tau\|_{W^{1,\infty}(\Omega)} \lesssim \|\cU\|_{W^{1,\infty}(\Omega)} \|\CC\|_{W^{1,\infty}(\Omega)}.
\end{equation*}
and similarly for $\CC\circ \phi_\tau^{-1}$. For ease of notation, we let
\begin{equation*}
     L:=\|\cU\|_{W^{1,\infty}(\Omega)}.
\end{equation*}

We begin with showing that a weak limit exists. We  observe that, since the lift of the jump on $\Gamma$ satisfies a linear problem, $\bell_{\tau \,  \bhtilde}=\tau \bell_{\bhtilde}$. Consequently, $\Dot{\bell_{\bhtilde}} =\bell_{\bhtilde} $. 

\begin{lemma} \label{lem:DotuBarWeakConv}
Let $\bu$ be the  unique weak solution of \eqref{eq:ForwardProblem} with $\bg\in H^\half_{00}(S)$ and let  $\buBar_\tau$ be given in \eqref{eq:BuBarDef}. Then
$\,\,\dfrac{\buBar_{\tau}- \bu}{\tau}$ converges weakly in $H^1_\Sigma(\Omega)$ and strongly in $L^2(\Omega)$ as $\tau \to 0^+$.
\end{lemma}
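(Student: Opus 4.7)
The plan is to show that $\bw_\tau := (\buBar_\tau - \bu)/\tau$ is uniformly bounded in $H^1_\Sigma(\Omega)$ for $\tau\in (0,\tau_0]$; then weak compactness produces the weak $H^1$ limit, and Rellich--Kondrachov upgrades it to strong $L^2$ convergence. First, since $[\buBar_\tau]_\Gamma = \bgtilde = [\bu]_\Gamma$, the difference has no jump across $\Gamma$, so $\bw_\tau \in H^1_\Sigma(\Omega)$. Using the linearity of~\eqref{eq:liftDef}, which gives $\bell_{\tau\bhtilde}=\tau\bell_{\bhtilde}$, and subtracting $a_\Gamma(\bu,\bw)=0$ from $a_\tau(\buBar_\tau,\bw) = -\tau\,a_\tau(\bell_{\bhtilde},\bw)$, one obtains
\[
  a_\tau(\bw_\tau,\bw) = -a_\tau(\bell_{\bhtilde},\bw) - \frac{1}{\tau}\bigl(a_\tau(\bu,\bw) - a_\Gamma(\bu,\bw)\bigr), \qquad \forall\,\bw \in H^1_\Sigma(\Omega).
\]

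The analytic core is to bound the right-hand side uniformly in $\tau$. The first term is controlled by $\|\bell_{\bhtilde}\|_{H^1}\|\bw\|_{H^1}$ via boundedness of the coefficients of $a_\tau$. For the second, the expansions~\eqref{eq:DphiTauDer}--\eqref{eq:JphiTauDer} together with $\CC\in W^{1,\infty}(\Omega)$ give the $L^\infty$ estimates $\|\CC_\tau - \CC\|_\infty + \|[D\phi_\tau]^{-1}-I\|_\infty + \|J(\phi_\tau)-1\|_\infty \lesssim \tau$, from which a routine expansion yields $(a_\tau - a_\Gamma)(\bu,\bw) = O(\tau)\,\|\bu\|_{H^1}\|\bw\|_{H^1}$, so the difference quotient is bounded independent of $\tau$. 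For the coercivity, the convergences $[D\phi_\tau]^{-1}\to I$ and $J(\phi_\tau)\to 1$ in $L^\infty$ allow a perturbation off the coercive form $a_\Gamma$: the strong convexity~\eqref{eq:StrongConvexCond} (inherited by $\CC_\tau = \CC\circ\phi_\tau$) combined with Korn's inequality on $H^1_\Sigma(\Omega)$ yields $a_\tau(\bw,\bw) \geq c\|\bw\|_{H^1}^2$ with $c>0$ independent of $\tau\in[0,\tau_0]$, after possibly shrinking $\tau_0$. Testing against $\bw=\bw_\tau$ then delivers $\|\bw_\tau\|_{H^1} \leq C$ uniformly.

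Weak compactness in $H^1_\Sigma(\Omega)$ then extracts a subsequence $\bw_{\tau_n}\rightharpoonup\buBarDot$ weakly, and Rellich--Kondrachov gives strong convergence in $L^2(\Omega)$. To promote subsequential convergence to convergence of the whole family, I would pass to the limit in the difference equation: using the $L^\infty$ convergence of the coefficients together with~\eqref{eq:DphiTauDer}--\eqref{eq:JphiTauDer} to handle the difference-quotient term, any weak limit is seen to solve the same linear variational problem in $H^1_\Sigma(\Omega)$, whose unique solvability via Lax--Milgram (applied to the coercive form $a_\Gamma$) forces the limit to be independent of the subsequence. The principal technical obstacle is securing the uniform coercivity of $a_\tau$: since the deformed form contracts $\nabla\bw\,[D\phi_\tau]^{-1}$ with itself rather than the symmetric gradient with itself, the Korn-plus-strong-convexity argument must be combined with a careful perturbation-from-$a_\Gamma$ estimate, and verifying that this perturbation stays coercive uniformly in $\tau$ is the crux of the a priori bound.
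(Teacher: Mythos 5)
Your proposal is correct and follows essentially the same route as the paper: the uniform $H^1$ bound on $(\buBar_\tau-\bu)/\tau$ is obtained by combining the two weak formulations \eqref{eq:WeakForm} and \eqref{eq:BuBarWeakForm}, the $O(\tau)$ estimates \eqref{eq:DphiTauDer}--\eqref{eq:JphiTauDer} on $[D\phi_\tau]^{-1}-I$, $J(\phi_\tau)-1$, and $\CC_\tau-\CC$, and Korn plus strong convexity for the ($\tau$-uniform) coercivity, after which weak compactness and Rellich conclude. Your packaging via the single identity $a_\tau(\bw_\tau,\bw)=-a_\tau(\bell_{\bhtilde},\bw)-\tau^{-1}\bigl(a_\tau-a_\Gamma\bigr)(\bu,\bw)$ is a slightly cleaner rendition of the paper's term-by-term estimate of $A+B_1+B_2$ (and avoids the stability bound on $\Breve{\bu}_\tau$ that the paper needs for $B_1$), but the substance is identical.
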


\begin{remark}
Although convergence will be along subsequences (not relabeled), the whole sequence will converge as we will show uniqueness of the limit later on. 
\end{remark}

\begin{proof}
By Poincar\'e's inequality, it is enough to establish a uniform bound on $\frac{\|\nabla \buBar_\tau-\nabla\bu\|_{L^2(\Omega)}}{\tau}$. 

We recall that $[D\phi_\tau]^{-1}\approx I-\tau D\cU$ for $\tau\in [0,\tau_0]$,
so that
\begin{align}
  \left|\nablahat\overline{\bu}_\tau(\bx) - \nablahat \bu(\bx)\right|^2 
   &\lesssim L^2 \, \left|\widehat{\left(\nabla\overline{\bu}_\tau(\bx) [D\phi_\tau(\bx)]^{-1}\right)} - \widehat{\left(\nabla \bu(\bx) [D\phi_\tau(\bx)]^{-1}\right)} \right|^2. \nonumber
\end{align}
Next, using the strong convexity assumption \eqref{eq:StrongConvexCond} and the symmetry of $\CC$, we have that for a.e. $\; \bx \in \Omega$,
\begin{equation*}
  \left|\widehat{\left(\nabla\overline{\bu}_\tau(\bx) [D\phi_\tau(\bx)]^{-1}\right)} - \widehat{\left(\nabla \bu(\bx) [D\phi_\tau(\bx)]^{-1}\right)}\right|^2  \lesssim C(L,\gamma)\,
   \mathbb{C} \left[ \nabla (\overline{\bu}_\tau -   \bu) (\bx)
 [D\phi_\tau(\bx)]^{-1} \right]\vvdot\left[ \nabla (\overline{\bu}_\tau -\bu) (\bx) [D\phi_\tau (\bx)]^{-1} \right].
\end{equation*}
It then follows from Korn's Inequality that
 \begin{align}
 \,\,\dfrac{1}{\tau^2}\int_{\Omega} \left|\nabla\overline{\bu}_\tau - \nabla \bu\right|^2 \, d\bx
  &\lesssim C(L,\gamma) \int_{\Omega} \,\,\dfrac{1}{\tau^2} \mathbb{C} \left[ \nabla (\overline{\bu}_\tau -\bu) [D\phi_\tau]^{-1} \right]\vvdot \left[ \nabla (\overline{\bu} _\tau -\bu) [D\phi_\tau]^{-1} \right] 
  \, d\bx. \label{eq:L2NormInitialBound}
\end{align}
Next we note that, since  $\,\,\dfrac{\overline{\bu}_\tau -\bu}{\tau} \in H^1_\Sigma(\Omega)$, it can  be used  as a test function in \eqref{eq:WeakForm} for $\bu$ and in \eqref{eq:WeakFormOriginalDomain} for $\Breve{\bu}_\tau$. Hence, for $\tau>0$
\begin{align}
  \frac{1}{\tau} &\int_{\Omega\setminus\Gamma}\CC \,\nabla\bu \vvdot \nabla \left(\,\,\dfrac{\overline{\bu}_\tau -\bu}{\tau} \right) \,d\bx=0 \nonumber \\
  \frac{1}{\tau} &\int_{\Omega\setminus\Gamma} \mathbb{C}_\tau \nabla \Breve{\bu}_\tau [D\phi_\tau]^{-1} \vvdot \left[\nabla \left(\,\,\dfrac{\buBar_\tau -\bu}{\tau} \right) [D\phi_\tau]^{-1} \right] \, J(\phi_\tau)\,d\bx=0 \label{eq:BuBarMinusBuTestFunction}
\end{align}
We then write, using the fact that we can integrate 
$\buBar_\tau-\bu$ equivalently on $\Omega$ or $\Omega\setminus \Gamma$ or $\Omega \setminus \Gamma _\tau$:
\begin{align}
\,\,\dfrac{1}{\tau^2}  \int_{\Omega} \mathbb{C}  & \left[ \nabla (\overline{\bu}_\tau -\bu)  [D\phi_\tau]^{-1}  \right]\vvdot \left[ \nabla (\overline{\bu} _\tau -\bu) [D\phi_\tau]^{-1}  \right] \, d\bx\nonumber \\
 &= - \frac{1}{\tau^2}\int_{\Omega\setminus\Gamma} \CC \left[ \nabla \bu 
 [D\phi_\tau]^{-1} \right]\vvdot \left[ \nabla (\overline{\bu} _\tau -\bu) 
 [D\phi_\tau]^{-1}   \right]  \,  d\bx \nonumber \\
 & \qquad \qquad  + \frac{1}{\tau^2} \int_{\Omega\setminus \Gamma} \CC \left[\nabla \overline{\bu}_\tau  [D\phi_\tau]^{-1}  \right]\vvdot \left[ \nabla (\overline{\bu} _\tau -\bu) [D\phi_\tau]^{-1} \right] \, d\bx
 =: A + B.  \nonumber 
\end{align}
We tackle the terms $A$ and $B$ separately, exploiting \eqref{eq:BuBarMinusBuTestFunction}.
For the first term,  we have:
\begin{align}
 A&=  - \int_{\Omega\setminus\Gamma} \CC  \left[\nabla \bu \,\,\left(\dfrac{[D\phi_\tau]^{-1} -I}{\tau}\right)\right] \vvdot \left[ \nabla \,\,\left(\dfrac{\overline{\bu} _\tau -\bu}{\tau}\right)  \right]  \,  d\bx \nonumber \\
  &\qquad \qquad \qquad - \int_{\Omega\setminus\Gamma} \CC  \left[ \nabla \bu \,[D\phi_\tau]^{-1} \right] \vvdot \left[ \nabla \,\,\left(\dfrac{\overline{\bu} _\tau -\bu}{\tau}\right) \,\,\left(\dfrac{[D\phi_\tau]^{-1}-I)}{\tau}\right)\right]  
 \nonumber
\end{align}
Consequently, uniformly in $\tau\in [0,\tau_0]$,
\begin{equation} \label{eq:ABound}
|A|\lesssim C(L,\gamma)\, \|\bu\|_{H^1(\Omega\setminus \overline{S})}\; \|\,\,\dfrac{\nabla(\buBar_\tau-\bu)}{\tau}\|_{L^2(\Omega)}.
\end{equation}
For the second term, we recall the definition of $\buBar_\tau$ \eqref{eq:BuBarDef}  to obtain:
\begin{align}
  B &=  \int_{\Omega\setminus \Gamma} \frac{1}{\tau} \CC \left[\nabla \Breve{\bu}_\tau  [D\phi_\tau]^{-1}   \right]\vvdot \left[ \nabla \left(\,\,\dfrac{\overline{\bu} _\tau -\bu}{\tau}\right) [D\phi_\tau]^{-1}   \right] \, d\bx  \nonumber \\
  &\qquad - \int_{\Omega\setminus \Gamma} \CC \left[\nabla \,\,\dfrac{\ell_{\tau\,\bhtilde}}{\tau}  [D\phi_\tau]^{-1} \right]\vvdot \left[ \nabla \,\,\dfrac{(\overline{\bu} _\tau -\bu)}{\tau} [D\phi_\tau]^{-1}  \right] \, d\bx=: B_1+B_2. \nonumber
\end{align}
For the first term on the right, using the second equation in \eqref{eq:BuBarMinusBuTestFunction}:
\begin{align}
 B_1 &= \int_{\Omega\setminus \Gamma}   \CC_\tau \left[\nabla \Breve{\bu}_{\tau} \,\,\dfrac{([D\phi_\tau]^{-1}  -I)}{\tau}\right]\vvdot \left[ \nabla \,\,\dfrac{(\overline{\bu}_\tau -\bu)}{\tau} [D\phi_\tau]^{-1}   \right] \, d\bx \nonumber \\
 &\qquad - \int_{\Omega\setminus \Gamma}  \,\,\dfrac{(\CC_\tau-\CC)}{\tau} \left[\nabla \Breve{\bu}_{\tau}  [D\phi_\tau]^{-1} \right]\vvdot \left[ \nabla \,\,\dfrac{(\overline{\bu}_\tau -\bu)(\bx)}{\tau} [D \phi_\tau]^{-1}  \right] \, d\bx \nonumber \\
 &\qquad \qquad + \int_{\Omega\setminus \Gamma}  \CC_\tau \left[\nabla \Breve{\bu}_{\tau} [D\phi_\tau]^{-1} \right]\vvdot \left[ \nabla \,\,\dfrac{(\overline{\bu}_\tau -\bu)(\bx)}{\tau} [D \phi_\tau]^{-1}(\bx) \right] \, \frac{(J(\phi)-1)}{\tau} \, d\bx \nonumber
\end{align}
Hence, for $\tau\in [0,\tau_0]$ we have the bound:
\beq \label{eq:B1Bound}
 |B_1|\lesssim C(L,\gamma) \, \|\Breve\bu_\tau\|_{H^1(\Omega\setminus\overline{S})} \, \|\,\,\dfrac{\nabla(\buBar_\tau-\bu)}{\tau}\|_{L^2(\Omega)}
 \lesssim \tilde{C}(L,\gamma) \, \|\bu_\tau\|_{H^1(\Omega\setminus\overline{S_\tau})} \, \|\,\,\dfrac{\nabla(\buBar_\tau-\bu)}{\tau}\|_{L^2(\Omega)}.
\eeq
 For the second term on the right, we have similarly
 \beq \label{eq:B2Bound}
  |B_2|\lesssim C(L,\gamma) \, \|\ell_{\bhtilde}\\|_{H^1(\Omega\setminus\overline{S})} \, \|\,\,\dfrac{\nabla(\buBar_\tau-\bu)}{\tau}\|_{L^2(\Omega)},
 \eeq 
using also that by linearity $\ell_{\tau\,\bhtilde}/\tau=\ell_{\bhtilde}$.
From \eqref{eq:L2NormInitialBound}, \eqref{eq:ABound},\eqref{eq:B1Bound},\eqref{eq:B2Bound}, we then obtain the desired uniform bound:
\beq \label{eq:L2NormFinalBound}
  \|\,\,\dfrac{\nabla(\buBar_\tau-\bu)}{\tau}\|^2_{L^2(\Omega)} \lesssim
  C(L,\gamma,\|\bh\|_{H^\half_{00}(S)})\, \|\,\,\dfrac{\nabla(\buBar_\tau-\bu)}{\tau}\|_{L^2(\Omega)}, \qquad \forall \tau\in [0,\tau_0],
\eeq
where we have employed also the stability estimates for problems \eqref{eq:ForwardProblem}, \eqref{eq:ForwardProblemPerturbed}, and \eqref{eq:liftDef}.

We hence have that, upon passing to a subsequence, $\,\,\dfrac{\buBar_\tau-\bu}{\tau}$ converges weakly to an element
$\bv\in H^1_\Sigma(\Omega)$. By Rellich Compactness Theorem, the subsequence converges strongly in $L^2(\Omega)$ (in fact, in $H^s(\Omega)$ for $s<1$, and by weak continuity of the derivative:
\begin{equation*}
   \,\,\dfrac{\nabla(\buBar_\tau-\bu)}{\tau} \rightharpoonup \nabla \bv,
\end{equation*}
in $L^2(\Omega)$.
\end{proof}

\begin{remark}
For the validity of \eqref{eq:L2NormFinalBound}, it is not necessary to have $\CC$ globally Lipschitz in $\Omega$. It is sufficient that $\CC$ is uniformly Lipschitz in a neighborhood of $\Gamma$, which also contains $\Gamma_\tau$ for $\tau$ sufficiently small.
\end{remark}

We next establish uniqueness of the weak limit and strong convergence, by characterizing the material derivative as the solution of a certain variational problem.

\begin{theorem} \label{t:MatDerivative}
In the hypotheses of Lemma \ref{lem:DotuBarWeakConv},  
$\,\,\dfrac{\buBar_\tau-\bu}{\tau}$ converges as $\tau\to 0^+$ strongly to $\buBarDot\in H^1_\Sigma $, which is the unique solution of the variational problem:
\begin{align}
    \int_{\Omega} \mathbb{C} \nabla \dot{\overline{\bu}} \vvdot 
    \nabla \bw \, d\bx &= - \int_{\Omega\setminus\Gamma} \mathbb{B}
    \,\nabla\bu \vvdot \nabla \bw  \, d\bx  -\int_{\Omega \setminus \Gamma} \mathbb{C}  \,\nabla\bu \vvdot
    \nabla \bw \;\dive \cU  \, d\bx +\int_{\Omega \setminus \Gamma} \mathbb{C} \,\nabla\bu \vvdot \nabla \bw  \, D\cU \, d\bx \nonumber \\
    & +\int_{\Omega\setminus\Gamma}\mathbb{C} \,\nabla\bu  \,D \cU \vvdot 
    \nabla \bw  \, d\bx - \int_{\Omega\setminus \Gamma} \mathbb{C}\, \nabla \bell_{\bhtilde} \vvdot \nabla \bw  \, d\bx, \label{eq:buDotBarWeakForm}
\end{align}
for $\bw \in H^1_\Sigma(\Omega)$, where 
\begin{align}
    \mathbb{B}^{abcd}:=\lim_{\tau \to 0^+} \,\,\dfrac{\CC^{abcd}_\tau-\CC^{abcd}}{\tau}= \pa^e_{\bx} \CC^{abcd} \cU_e, 
    \label{eq:BBdef}
\end{align}
in $L^\infty(\Omega)$.
\end{theorem}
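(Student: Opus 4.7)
The plan is to derive the variational equation \eqref{eq:buDotBarWeakForm} as a limit of ``divided difference'' variational identities, invoke Lax--Milgram to get uniqueness of the solution $\buBarDot$, and then use a standard test-function argument to upgrade the weak convergence of Lemma \ref{lem:DotuBarWeakConv} to strong convergence in $H^1_\Sigma(\Omega)$.

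\textbf{Step 1 (subtract the variational formulations).} For any $\bw\in H^1_\Sigma(\Omega)$, the weak formulation \eqref{eq:WeakFormOriginalDomain} combined with linearity of the lift ($\bell_{\tau\,\bhtilde}=\tau\,\bell_{\bhtilde}$) gives
\begin{equation*}
  a_\tau(\buBar_\tau,\bw) \;=\; -\,\tau\, a_\tau(\bell_{\bhtilde},\bw).
\end{equation*}
Subtracting $a_\Gamma(\bu,\bw)=0$ and dividing by $\tau$, I would write
\begin{equation*}
  \frac{a_\tau(\buBar_\tau,\bw)-a_\Gamma(\bu,\bw)}{\tau} \;=\; -\,a_\tau(\bell_{\bhtilde},\bw).
\end{equation*}
Set $w_\tau:=(\buBar_\tau-\bu)/\tau$. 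Then the left-hand side decomposes, after the algebraic identity
$X_\tau Y_\tau Z_\tau - XYZ = (X_\tau-X)Y_\tau Z_\tau + X(Y_\tau-Y)Z_\tau + XY(Z_\tau-Z)$ applied to $\CC_\tau,[D\phi_\tau]^{-1},J(\phi_\tau)$, into
\begin{equation*}
  \int_{\Omega\setminus\Gamma}\CC\nabla w_\tau\vvdot\nabla\bw\,d\bx \;+\; R_\tau(\bw),
\end{equation*}
where $R_\tau(\bw)$ collects four remainder terms, each of which contains exactly one of the divided differences
$\tfrac{\CC_\tau-\CC}{\tau}$, $\tfrac{[D\phi_\tau]^{-1}-I}{\tau}$ (appearing twice, once on each factor), and $\tfrac{J(\phi_\tau)-1}{\tau}$.

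\textbf{Step 2 (pass to the limit).} By \eqref{eq:DphiTauDer}, \eqref{eq:JphiTauDer}, and \eqref{eq:BBdef}, each divided difference converges in $L^\infty(\Omega)$. Because $\buBar_\tau\to\bu$ strongly in $H^1(\Omega\setminus\overline{S})$ (since $\tau w_\tau\to 0$ in $H^1$) and $w_\tau\rightharpoonup \buBarDot$ weakly in $H^1_\Sigma(\Omega)$ by Lemma \ref{lem:DotuBarWeakConv}, I can pass to the limit in every term. The four remainder terms yield, respectively,
\begin{equation*}
  \int_{\Omega\setminus\Gamma}\mathbb{B}\nabla\bu\vvdot\nabla\bw\,d\bx, \quad
  -\!\!\int \CC\,\nabla\bu\, D\cU\vvdot\nabla\bw, \quad
  -\!\!\int \CC\,\nabla\bu\vvdot \nabla\bw\, D\cU, \quad
  \int \CC\,\nabla\bu\vvdot\nabla\bw\,\dive\cU,
\end{equation*}
while the right-hand side $-a_\tau(\bell_{\bhtilde},\bw)$ converges to $-\int\CC\nabla\bell_{\bhtilde}\vvdot\nabla\bw$. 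Rearranging signs yields exactly \eqref{eq:buDotBarWeakForm}.

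\textbf{Step 3 (uniqueness and strong convergence).} The bilinear form $\bw\mapsto a_\Gamma(\buBarDot,\bw)$ on $H^1_\Sigma(\Omega)$ is coercive by \eqref{eq:StrongConvexCond} and Korn's inequality (since $H^1_\Sigma$ vanishes on a set of positive surface measure), so Lax--Milgram gives a unique solution to \eqref{eq:buDotBarWeakForm}. This unique identification shows that the entire sequence $w_\tau$, not just a subsequence, converges weakly to $\buBarDot$. For strong convergence, I test with $\bw=w_\tau-\buBarDot\in H^1_\Sigma(\Omega)$ in both the identity of Step 1 and in \eqref{eq:buDotBarWeakForm}, and subtract:
\begin{equation*}
  \int_{\Omega\setminus\Gamma}\CC\,\nabla(w_\tau-\buBarDot)\vvdot\nabla(w_\tau-\buBarDot)\,d\bx
  \;=\; \bigl[R_0-R_\tau\bigr](w_\tau-\buBarDot) \;-\; a_\tau(\bell_{\bhtilde},w_\tau-\buBarDot) + a_\Gamma(\bell_{\bhtilde},w_\tau-\buBarDot).
\end{equation*}
Each remainder term on the right is a pairing of something bounded in $L^2$ against $\nabla(w_\tau-\buBarDot)$, which converges weakly to zero in $L^2$; hence the right-hand side tends to zero. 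Coercivity plus Korn then gives $\|w_\tau-\buBarDot\|_{H^1(\Omega)}\to 0$.

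\textbf{Main obstacle.} The computation itself is routine; the delicate point is bookkeeping in Step 1---cleanly separating the ``principal'' term $a_\Gamma(w_\tau,\bw)$ from the ``remainder'' $R_\tau(\bw)$ so that, in the strong-convergence argument of Step 3, $R_\tau$ is seen as an $L^2$-bounded field paired against $\nabla(w_\tau-\buBarDot)$, rather than as an object one would need to estimate in a stronger norm.
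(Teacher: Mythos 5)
Your proposal is correct in substance and, for the derivation of the limiting variational problem (Steps 1--2) and the uniqueness via coercivity of $a_\Gamma$ on $H^1_\Sigma(\Omega)$, it follows essentially the same route as the paper: the paper likewise writes the divided-difference form $a_\tau\big(\tfrac{\buBar_\tau-\bu}{\tau},\bw\big)$, splits it into a principal term treated as a weak--strong pairing plus terms each carrying exactly one divided difference of $\CC_\tau$, $[D\phi_\tau]^{-1}$, or $J(\phi_\tau)$, and passes to the limit using \eqref{eq:DphiTauDer}, \eqref{eq:JphiTauDer}, and \eqref{eq:BBdef}. Where you genuinely diverge is the strong-convergence step. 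The paper expands $a_\tau(w_\tau-\buBarDot,\,w_\tau-\buBarDot)=a_\tau(\buBarDot,\buBarDot)-2a_\tau(w_\tau,\buBarDot)+a_\tau(w_\tau,w_\tau)$ and must handle the purely quadratic term $a_\tau(w_\tau,w_\tau)$ separately, since one cannot pass to the limit in a product of two weakly convergent factors; it does so by substituting the equation for $\buBar_\tau$ once more (identity \eqref{eq:BuBarMinusBuTestFunction}) and redistributing the factor $1/\tau$ onto strongly convergent divided differences. Your version avoids this entirely: subtract the limiting identity from the divided-difference identity, test with $w_\tau-\buBarDot$, and show the residual functional vanishes. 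This is shorter and is the standard continuous-dependence argument; the price is that the bookkeeping in Step 1 must isolate the principal term as exactly $a_\Gamma(w_\tau,\bw)$, which produces one extra remainder of the form $\int\CC\,\nabla w_\tau\,[\,\cdot\,]\vvdot\nabla\bw$ whose coefficient is only $O(\tau)$ in $L^\infty$ and which contains the weakly convergent factor $\nabla w_\tau$ --- harmless, but not of the ``one divided difference times strongly convergent data'' type you describe.

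One justification in Step 3 should be repaired: ``a pairing of something bounded in $L^2$ against $\nabla(w_\tau-\buBarDot)$, which converges weakly to zero, hence tends to zero'' is not a valid principle (bounded times weakly null need not vanish). What actually makes the right-hand side vanish is that $[R_0-R_\tau]$ and $a_\Gamma(\bell_{\bhtilde},\cdot)-a_\tau(\bell_{\bhtilde},\cdot)$ tend to zero in the dual norm of $H^1_\Sigma(\Omega)$: their coefficients converge strongly in $L^2$, using $\nabla\buBar_\tau\to\nabla\bu$ strongly in $L^2$ (as you note, since $\tau w_\tau\to 0$ in $H^1$) together with the $L^\infty$ convergence of the divided differences, so pairing them with the merely bounded sequence $\nabla(w_\tau-\buBarDot)$ suffices. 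The terms you wrote do have this stronger structure, so the conclusion stands once the reason is stated this way.
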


\begin{remark}
In general, the 4th-order tensor $\BB$ does not enjoy the same symmetry properties as $\CC$, which reflects the fact that the system of elastostatics is not covariant under changes of coordinates.
\end{remark}

\begin{proof}
We begin by showing that any weak limit of $\,\,\dfrac{\buBar_\tau-\bu}{\tau}$ satisfies \eqref{eq:buDotBarWeakForm}. This fact will also ensure its uniqueness.

We start by writing $a_\tau(\,\,\dfrac{\buBar_\tau-\bu}{\tau},\bw)$ for a generic test function $\bw\in H^1_\Sigma(\Omega)$.
We proceed as in the computation of terms $A$ and $B$ in the proof of Lemma \eqref{lem:DotuBarWeakConv},  again exploiting the weak formulation  \eqref{eq:WeakFormOriginalDomain} for $\Breve{\bu}_\tau$:
\begin{align}
  &- \int_{\Omega}  \CC_\tau \left[ \,\,\dfrac{(\nabla \buBar_\tau - \nabla\bu)}{\tau} 
 [D\phi_\tau]^{-1}\right] \vvdot \left[\nabla \bw [D \phi_\tau]^{-1}\right]
 \, J(\phi_\tau) \,d\bx =  \,\,\dfrac{1}{\tau}
 \int_{\Omega\setminus\Gamma} \mathbb{C}_\tau \left[ \nabla\bu
[D \phi_\tau] ^{-1} \right] \vvdot \left[\nabla \bw 
[D \phi_\tau ]^{-1}\right]   \nonumber \\
   & \qquad \qquad \qquad \, J(\phi_\tau) \,d\bx - \int_{\Omega\setminus \Gamma} \mathbb{C}_\tau
\left[ \,\,\dfrac{(\nabla\Breve{\bu}_{\tau} - \nabla \bell_{\tau \,  \bhtilde})}{\tau}   [D\phi_\tau]^{-1}\right] \vvdot  \left[\nabla \bw D [\phi_\tau ]^{-1} \right]\, J(\phi_\tau) \,d\bx= \nonumber \\
&  \,\,\dfrac{1}{\tau} \int_{\Omega\setminus\Gamma} \mathbb{C}_\tau \left[\nabla \bu  [D \phi_\tau]^{-1}\right] \vvdot \left[ \nabla \bw  [D \phi_\tau]^{-1}\right] \, J(\phi_\tau) \,d\bx 
+ \int_{\Omega\setminus \Gamma} \mathbb{C}_\tau \left[ \bell_{\bhtilde}    
[D \phi _\tau] ^{-1}\right]
 \vvdot  \left[\nabla \bw [D \phi_\tau]^{-1}  \right]  \, J(\phi_\tau) \,d\bx.
\end{align}
For notational convenience, we write the above identity abstractly as:
 \; $   G_\tau=E_\tau+F_\tau$.

Our goal is to pass to the limit $\tau\to 0_+$ in each term. In all three terms, we will use the assumptions on $\CC$ and $\phi_\tau$, in particular \eqref{eq:DphiTauDer} and \eqref{eq:JphiTauDer}. We also recall that the composition of two Lipschitz maps is still Lipschitz with constant that can be bounded by the product of the two Lipschitz constants, which implies in particular that 
$\CC_\tau \to \CC$ in $W^{1,\infty}(\Omega)$ as $\tau\to 0$. Then, we can compute the tensor $\BB$ as follows:
\begin{align}
      \dfrac{(\CC_\tau(\bx) - \CC(\bx))}{\tau} &=\, \dfrac{\mathbb{C}(\bx + 
      \tau \mathcal{U}(\bx)) - \mathbb{C}(\bx)}{\tau}  \nonumber \\
      &= \,\dfrac{\tau\,\mathcal{U}(\bx) \cdot \nabla_{\bm{z}} \mathbb{C}(\bm{z}(\tau)) }{\tau} 
      \underset{\tau\to 0_+}{\longrightarrow} \cU(\bx)\ccdot \nabla_{\bx} \mathbb{C}(\bx),
\end{align}
where $\bz(\tau)$ is a point on the segment joining  $\bx$ with $\bx + 
\tau \mathcal{U}(\bx)$.

We first investigate the behavior as $\tau\rightarrow  0_+$ of  the integral $G_\tau$ on the left-hand side. By Lemma \ref{lem:DotuBarWeakConv} the left-hand side consists of the pairing between an $L^2$ weakly convergent (sub)sequence $ \,\,\dfrac{\nabla \buBar_\tau-\nabla \bu}{\tau}$ and an $L^2$ strongly convergent sequence $J(\phi_\tau)\,\left(\CC_\tau \nabla \bw [D \phi _\tau] ^{-1}\right) [D \phi _\tau]^{-1}$. This last expression indicates the action of a 4th-order tensor onto a 2nd-order tensor  yielding a 2nd-order tensor (up to the scalar factor $J(\phi_\tau)$), and strong convergence follows from H\"older's inequality. Then, by weak-strong convergence:
\begin{align} 
  G_\tau&= - \int_{\Omega}  \CC_\tau \left[ \,\,\dfrac{(\nabla \buBar_\tau - \nabla\bu)}{\tau} 
 [D\phi_\tau]^{-1}\right] \vvdot \left[\nabla \bw [D \phi_\tau]^{-1}\right]
 \, J(\phi_\tau) \,d\bx  \nonumber \\
 &\qquad  =-\int_{\Omega} J(\phi_\tau)\, \left(\CC_\tau \nabla \bw 
 [D\phi_\tau]^{-1}\right) [D \phi _\tau] ^{-1} \vvdot  \,\,\dfrac{(\nabla \buBar_\tau-\nabla \bu)}{\tau}\, d\bx
 \nonumber \\
 & \qquad \qquad \underset{\tau\to 0_+}{\longrightarrow} \quad  -\int_{\Omega} \CC \nabla \bw  \vvdot \nabla \buBarDot\, d\bx= 
 -\int_{\Omega} \CC \nabla \buBarDot \vvdot \bw\, d\bx. \label{eq:LHSconv}
\end{align}
Next, we tackle the first integral $E_\tau$ on the right-hand side. We split into three parts, taking advantage of the fact that $\bu$ solves \eqref{eq:WeakForm}:
\begin{align}
  E_\tau & = \,\,\dfrac{1}{\tau}
 \int_{\Omega\setminus\Gamma} \mathbb{C}_\tau \left[ \nabla\bu
[D \phi_\tau] ^{-1} \right] \vvdot \left[\nabla \bw 
[D \phi_\tau ]^{-1}\right] \, J(\phi_\tau) \,d\bx \nonumber \\
 & = \int_{\Omega\setminus\Gamma} \,\,\dfrac{\left(\mathbb{C}_\tau -\CC\right)}{\tau}  \left[ \nabla\bu
[D \phi_\tau] ^{-1} \right] \vvdot \left[\nabla \bw 
[D \phi_\tau ]^{-1}\right] \, J(\phi_\tau) \,d\bx \nonumber \\
& \qquad + \left\{\int_{\Omega\setminus\Gamma} \CC \left[ \nabla\bu
[D \phi_\tau] ^{-1} \right] \vvdot \left[\nabla \bw 
 \,\dfrac{\left([D \phi_\tau ]^{-1} -I\right)}{\tau} \right] \, J(\phi_\tau) \,d\bx  \right. \nonumber \\
  &\qquad \qquad \left. + \int_{\Omega\setminus\Gamma} \CC \left[ \nabla\bu
 \,\,\dfrac{\left([D \phi_\tau ]^{-1} -I\right)}{\tau}
 \right] \vvdot \left[\nabla \bw [D \phi_\tau] ^{-1}
  \right] \, J(\phi_\tau) \,d\bx \right\}\nonumber  \\
  &\qquad \qquad \qquad + \int_{\Omega\setminus\Gamma} \CC \left[ \nabla\bu
 [D \phi_\tau] ^{-1} \right] \vvdot \left[\nabla \bw \right] \, \,\dfrac{\left(J(\phi_\tau)-1\right)}{\tau} \,d\bx \nonumber
\end{align}
Using once again the assumptions on $\CC$ and $\phi$, the regularity of $\bu$ and $\bw$, and H\"older's inequality, we can pass to the limit $\tau\to 0_+$ in each term above, as follows:
\begin{align}
 E_\tau & \underset{\tau\to 0_+}{\longrightarrow} \int_{\Omega\setminus\Gamma} \BB  \left[ \nabla\bu \right] \vvdot \left[\nabla \bw 
\right] \,d\bx \nonumber \\
&\qquad -  \left\{\int_{\Omega\setminus\Gamma} \CC \left[ \nabla\bu
D\cU \right] \vvdot \left[\nabla \bw \right] \, d\bx   +\int_{\Omega\setminus\Gamma} \CC \left[ \nabla\bu
 \right] \vvdot \left[\nabla \bw D\cU
  \right] \,  \,d\bx \right\}\nonumber \\
  & \qquad \qquad + \int_{\Omega\setminus\Gamma} \CC \left[ \nabla\bu
  \right] \vvdot \left[\nabla \bw \right] \, \dive \cU\,d\bx, 
  \label{eq:RHSconvB}
\end{align}
where we have also employed \eqref{eq:DphiTauDer}-\eqref{eq:JphiTauDer}, and defined $\BB$ by \eqref{eq:BBdef}.
Convergence in the last term $F_\tau$ follows straightforwardly from the convergence of $\phi_\tau$ to $I$ and of $\CC_\tau$ to $\CC$ in $W^{1,\infty}(\Omega)$:
\begin{align}
 F_\tau \underset{\tau\to 0_+}{\longrightarrow}
 \int_{\Omega\setminus \Gamma} \mathbb{C} \left[ \bell_{\bhtilde} \right]
 \vvdot  \left[\nabla \bw  \right]  \, d\bx. \label{eq:RHSconvC}
\end{align}
From \eqref{eq:LHSconv}-\eqref{eq:RHSconvC} it follows that any weak limit $\buBarDot$ must satisfy \eqref{eq:buDotBarWeakForm}.
Then the coercivity of the bilinear form $a_\Gamma$ in $H^1(\Omega)$ implies the uniqueness of $\buBarDot$ in this space.
Hence, the entire family $\big\{ \dfrac{\buBar_\tau - \bu}{\tau}\big\}_{0<\tau\leq \tau_0}$ converges weakly to $\buBarDot$ in $H^1(\Omega)$

We can now show that the convergence is actually strong in $H^1(\Omega)$.
We first observe that, since $\buBarDot\in H^1_\Sigma(\Omega)$, we can equivalently integrate on the right-hand side of \eqref{eq:buDotBarWeakForm} over $\Omega$ or $\Omega\setminus \Gamma$, and we can use $\buBarDot$ as test function $\bw$. Hence, we can write \eqref{eq:buDotBarWeakForm} in compact form as:
\begin{align}
  &a_\Gamma(\buBarDot,\buBarDot) = - \int_{\Omega\setminus\Gamma} \mathbb{B} \left(\nabla\bu \right) \vvdot \left(\nabla \buBarDot \right) \, d\bx 
  - a_\Gamma(\buBarDot,\buBarDot \,\dive \cU) + a_\Gamma(\buBarDot \,D\cU,\buBarDot) 
  + a_\Gamma(\buBarDot, \buBarDot \, D\cU) - a_\Gamma(\bell_{\bhtilde},\buBarDot), 
  \nonumber
\end{align}
where $a_\Gamma$ is defined below \eqref{eq:WeakForm}.
Next, similarly to \eqref{eq:L2NormInitialBound}, by Poincar\'e's inequality and the strong convexity of $\CC$, we have the estimate:
\begin{align}
 & \| \dfrac{\buBar_\tau - \bu}{\tau} - \buBarDot\|^2_{H^1(\Omega)} \lesssim \, \|\dfrac{\nabla \buBar_\tau - \nabla\bu}{\tau} - \nabla  \buBarDot \|_{L^2(\Omega)}^2  \nonumber \\
 &\lesssim  C(L,\bh) \, \int_{\Omega}\mathbb{C}_\tau\left[ \left(\,\dfrac{ \nabla (\overline{\bu}_\tau - \bu)}{\tau} - \nabla \buBarDot \right) D \phi_\tau ^{-1} \right] \vvdot \left[ \left(\,\dfrac{ \nabla (\overline{\bu}_\tau - \bu)}{\tau} - \nabla \buBarDot \right) D \phi_\tau ^{-1} \right] \, J(\phi_\tau)\, d\bx \nonumber \\
 & =a_\tau \left( \dfrac{\buBar_\tau - \bu}{\tau} - \buBarDot, \dfrac{\buBar_\tau - \bu}{\tau} - \buBarDot \right), \nonumber
\end{align}
with $a_\tau$ as in \eqref{eq:WeakFormOriginalDomain}.

We will show that the right-hand side of the expression above goes to zero as $\tau\to 0_+$. To this end, we split it into three parts:
\begin{equation} \label{eq:aTauBuDotSplit}
    a_{\tau}\left(\dfrac{\overline{\bu}_\tau -\bu}{\tau} -  \dot{\overline{\bu}},\dfrac{\overline{\bu}_\tau -\bu}{\tau} -  \dot{\overline{\bu}}\right) = a_{\tau}(\dot{\overline{\bu}},\dot{\overline{\bu}}) - 2a_{\tau}\left( \dfrac{\overline{\bu}_\tau -\bu}{\tau}, \dot{ \overline{\bu}} \right) + a_{\tau}\left(\dfrac{\overline{\bu}_\tau -\bu}{\tau},\dfrac{\overline{\bu}_\tau -\bu}{\tau}\right). 
\end{equation}
The convergence of $\CC_\tau$ to $\CC$ and of $\phi_\tau$ to $I$ in 
$W^{1,\infty}(\Omega)$ again implies that
\beq
  a_\tau(\bv_1,\bv_2) \underset{\tau \to 0_+}{\longrightarrow} 
  a_\Gamma(\bv_1,\bv_2), \quad \forall \bv_i \in H^1(\Omega\setminus\Gamma), \; i=1,2. \label{eq:aTautoaGammaConv}
\eeq
From \eqref{eq:aTautoaGammaConv}, we immediately have that
\beq
    a_{\tau}(\dot{\overline{\bu}},\dot{\overline{\bu}}) 
    \underset{\tau\to 0_+}{\longrightarrow}  a_{\Gamma} (\dot{\overline{\bu}},\dot{\overline{\bu}}),  \label{eq:FirstaTauTermConv}
\eeq
 Also, he first part of the proof using $\buBarDot$ as test function $\bw$ readily yields:
\beq
    a_{\tau}\left( \dfrac{\overline{\bu}_\tau -\bu}{\tau}, \dot{ \overline{\bu}} \right)
    \underset{\tau\to 0_+}{\longrightarrow} a_{\Gamma} (\dot{\overline{\bu}},\dot{\overline{\bu}}).  \label{eq:SecondaTauTermConv}
\eeq
We cannot directly pass to the limit $\tau\to 0_+$ in $a_{\tau}\left(\dfrac{\overline{\bu}_\tau -\bu}{\tau},\dfrac{\overline{\bu}_\tau -\bu}{\tau}\right)$, because we only have weak convergence of the arguments at this point. However, we recall the second identity in \eqref{eq:BuBarMinusBuTestFunction}, which allows us to rewrite this term as:
\begin{equation} \label{eq:ThirdaTauTermConv.0}
  a_{\tau}\left(\dfrac{\overline{\bu}_\tau -\bu}{\tau},\dfrac{\overline{\bu}_\tau -\bu}{\tau}\right) = 
 - a_\tau\left(\bell_{\bhtilde}, \dfrac{\overline{\bu}_\tau -\bu}{\tau} \right) 
- a_\tau\left(\dfrac{\bu}{\tau}, \dfrac{\overline{\bu}_\tau -\bu}{\tau}\right),
\end{equation}
using that $\dfrac{\bell_{\tau\bhtilde}}{\tau}= \dfrac{\tau\, \bell_{\bhtilde}}{\tau}=\bell_{\bhtilde}$.
Although $\bu$ and $\bell_{\bhtilde}$ are not test function, as they jump across $S$, the definition of $a_\tau$ only requires the entries to be in $H^1(\Omega\setminus\Gamma)$.
Consequently, we can pass to the limit $\tau\to 0_+$ in the first term on the right by \eqref{eq:aTautoaGammaConv} to obtain:
\beq  \label{eq:ThirdaTauTermConv.1}
  a_\tau\left(\bell_{\bhtilde}, \dfrac{\overline{\bu}_\tau -\bu}{\tau} \right)
  \underset{\tau\to 0_+}{\longrightarrow} a_\Gamma(\bell_{\bhtilde},\buBarDot).
\eeq
For the second term on the right, we employ arguments similar to those used in the first half of the proof, distributing the factor $1/\tau$ onto strongly converging terms:
\begin{align}
    a_\tau\left(\dfrac{\bu}{\tau}, \dfrac{\overline{\bu}_\tau -\bu}{\tau}\right) &= \int_{\Omega\setminus\Gamma} \CC_\tau \, \left[ \nabla \bu  \dfrac{[D \phi_\tau]^{-1}}{\tau} \right] \vvdot \left[ \nabla 
    \left(\dfrac{\overline{\bu}_\tau -\bu}{\tau}\right)  [D \phi_\tau]^{-1} \right] \, J(\phi_\tau)\, d\bx \nonumber \\
    &= \int_{\Omega\setminus\Gamma} \dfrac{\left(\CC_\tau-\CC\right)}{\tau} \, \left[ \nabla \bu  [D \phi_\tau]^{-1} \right] \vvdot \left[ \nabla 
    \left(\dfrac{\overline{\bu}_\tau -\bu}{\tau}\right)  [D \phi_\tau]^{-1} \right] \, J(\phi_\tau)\, d\bx \nonumber \\
    &\qquad +  \int_{\Omega\setminus\Gamma} \CC \, \left[ \nabla \bu  
    \left(\dfrac{[D \phi_\tau]^{-1}-I}{\tau}\right) \right] \vvdot \left[ \nabla \left(\dfrac{\overline{\bu}_\tau -\bu}{\tau}\right)  [D \phi_\tau]^{-1} \right] \, J(\phi_\tau)\, d\bx \nonumber \\
    &\qquad \qquad \qquad  +  \int_{\Omega\setminus\Gamma} \CC \, \left[ \nabla \bu \right] \vvdot \left[ \nabla \left(\dfrac{\overline{\bu}_\tau -\bu}{\tau}\right) \left(\dfrac{[D \phi_\tau]^{-1}-I}{\tau}\right) \right]  \, J(\phi_\tau)\, d\bx \nonumber \\
    &\qquad \qquad \qquad \qquad +  \int_{\Omega\setminus\Gamma} \CC \, \left[ \nabla \bu \right] \vvdot \left[ \nabla \left(\dfrac{\overline{\bu}_\tau -\bu}{\tau}\right)   \right] \, \left(\dfrac{J(\phi_\tau)-1}{\tau}\right)\, d\bx, \nonumber 
\end{align}
where in the last term we have used that $\bu$ solves \eqref{eq:WeakForm}.
Then, using again the assumptions on $\CC$ and $\phi_\tau$, \eqref{eq:DphiTauDer} and \eqref{eq:JphiTauDer}, we can pass to the limit $\tau\to 0_+$ in each term on the right, since we are pairing the weakly convergent sequence $\dfrac{\overline{\bu}_\tau -\bu}{\tau}$ with a strongly convergent sequence, obtaining that
\begin{align}
    a_\tau\left(\dfrac{\bu}{\tau}, \dfrac{\overline{\bu}_\tau -\bu}{\tau}\right) &\underset{\tau \to 0_+}{\longrightarrow}
    \int_{\Omega\setminus\Gamma} \mathbb{B}
    \,\nabla\bu \vvdot \nabla \buBarDot  \, d\bx  +\int_{\Omega \setminus \Gamma} \mathbb{C}  \,\nabla\bu \vvdot
    \nabla \buBarDot \;\dive \cU  \, d\bx -\int_{\Omega \setminus \Gamma} \mathbb{C} \,\nabla\bu \vvdot \nabla \buBarDot  \, D\cU \, d\bx \nonumber \\
    & \qquad \qquad \qquad -\int_{\Omega\setminus\Gamma}\mathbb{C} \,\nabla\bu  \,D \cU \vvdot 
    \nabla \buBarDot \, d\bx = - a_\Gamma(\buBarDot,\buBarDot)
    -a_\Gamma(\bell_{\bhtilde},\buBarDot), \label{eq:ThirdaTauTermConv.2}
\end{align}
given that $\buBarDot$ solves \eqref{eq:buDotBarWeakForm} for any test function $\bw\in H^1_\Sigma(\Omega)$.
From \eqref{eq:ThirdaTauTermConv.0}, using \eqref{eq:ThirdaTauTermConv.1} and \eqref{eq:ThirdaTauTermConv.2}, we have that
\begin{equation} \label{eq:ThirdaTauTermConv.3}
   a_{\tau}\left(\dfrac{\overline{\bu}_\tau -\bu}{\tau},\dfrac{\overline{\bu}_\tau -\bu}{\tau}\right) \underset{\tau \to 0_+}
   {\longrightarrow}
   - a_\Gamma(\bell_{\bhtilde},\buBarDot) + a_\Gamma(\buBarDot,\buBarDot)
    + a_\Gamma(\bell_{\bhtilde},\buBarDot) = a_\Gamma(\buBarDot,\buBarDot).
\end{equation}
Finally, by employing \eqref{eq:FirstaTauTermConv}, \eqref{eq:SecondaTauTermConv}, and \eqref{eq:ThirdaTauTermConv.3} in \eqref{eq:aTauBuDotSplit}, we can conclude that:
\begin{equation*}
   a_{\tau}\left(\dfrac{\overline{\bu}_\tau -\bu}{\tau} -  \dot{\overline{\bu}},\dfrac{\overline{\bu}_\tau -\bu}{\tau} -  \dot{\overline{\bu}}\right)
    \underset{\tau\to 0_+}{\longrightarrow}
    a_\Gamma(\buBarDot,\buBarDot) - 2 \, a_\Gamma(\buBarDot,\buBarDot) +  a_\Gamma(\buBarDot,\buBarDot)=0,
\end{equation*}
which in turn implies that
\begin{equation*}
   \dfrac{\overline{\bu}_\tau -\bu}{\tau}
   \underset{\tau\to 0_+}{\longrightarrow} \buBarDot \quad 
   \text{in } H^1_\Sigma(\Omega).
\end{equation*}
The proof is now complete.
\end{proof}

We recall that  
$\buBarDot_\tau=\Breve{\bu}_\tau-\bell_{\tau \bhtilde}$ and 
that $\dfrac{\bell_{\tau \bhtilde}}{\tau}=\bell_{\bhtilde}$ is constant in $\tau$. 
Then, from Theorem \ref{t:MatDerivative} we  straightforwardly obtain an expression for the material derivative 
\beq \label{eq:buDotDef}
   \Dot{\bu} := \lim_{\tau\to 0_+} \dfrac{\Breve{\bu}_\tau - \bu}{\tau},
\eeq
as solution of a variational problem.

\begin{corollary} \label{C:uMaterialDer}
Under the hypotheses of Lemma \ref{lem:DotuBarWeakConv},    
$\,\,\dfrac{\Breve{\bu}_\tau -\bu}{\tau}$ converges to  
    $\dot{\bu} =  \dot{\overline{\bu}} + \bell_{\bhtilde}$ strongly in $H^1_ \Sigma(\Omega \setminus\overline{S})$ and $\dot{\bu}$ is the unique variational solution of the problem
\begin{align} 
    &\int_{\Omega \setminus \overline{S}} \mathbb{C} \nabla \dot{\bu} \vvdot  \nabla \bv \,d\bx
    = - \int_{\Omega\setminus\overline{S}} \mathbb{B} \nabla\bu \vvdot \nabla \bv  \,d\bx \nonumber \\
    &-\int_{\Omega \setminus \overline{S}} \mathbb{C}  \nabla\bu \vvdot  \nabla \bv \, \dive \mathcal{U} \,d\bx + \int_{\Omega \setminus \overline{S}} \mathbb{C} \nabla\bu \vvdot \nabla \bv \, D \mathcal{U} \,d\bx + \int_{\Omega\setminus\overline{S}} \mathbb{C} \nabla\bu \, D \mathcal{U} \vvdot  \nabla \bv \,d\bx, \label{eq:buDotWeakForm}
\end{align}
for all $\bv\in H^1_\Sigma(\Omega)$, and  $[\dot{\bu}]_S = \bh$.
\end{corollary}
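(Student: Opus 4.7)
\smallskip

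\noindent\textbf{Proof plan.} The corollary is essentially a bookkeeping consequence of Theorem~\ref{t:MatDerivative}, combined with the linearity of the lift map $\bell_{(\cdot)}$. The plan is to rewrite $\Breve{\bu}_\tau$ in terms of $\buBar_\tau$, transfer the convergence result from $\buBar_\tau$ to $\Breve{\bu}_\tau$, and then identify the corresponding variational problem by moving the lift terms across the weak formulation~\eqref{eq:buDotBarWeakForm}.

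First, I would exploit linearity of the lift problem \eqref{eq:liftDef} to write $\bell_{\tau\bhtilde}=\tau\,\bell_{\bhtilde}$, so that the definition \eqref{eq:BuBarDef} gives
\[
\frac{\Breve{\bu}_\tau-\bu}{\tau}=\frac{\buBar_\tau-\bu}{\tau}+\bell_{\bhtilde}.
\]
By Theorem~\ref{t:MatDerivative}, $(\buBar_\tau-\bu)/\tau\to\buBarDot$ strongly in $H^1_\Sigma(\Omega)$. Since $\bell_{\bhtilde}$ is $\tau$-independent, it follows immediately that $(\Breve{\bu}_\tau-\bu)/\tau\to \buBarDot+\bell_{\bhtilde}=:\dot{\bu}$. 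Here I would note that, although $\buBarDot\in H^1_\Sigma(\Omega)$ has no jump, $\bell_{\bhtilde}$ has jump $\bhtilde$ across $\Gamma$; but because $\bhtilde$ is the zero extension of $\bh$, this jump is supported on $\overline{S}$, so $\bell_{\bhtilde}\in H^1_\Sigma(\Omega\setminus\overline{S})$ and consequently $\dot{\bu}\in H^1_\Sigma(\Omega\setminus\overline{S})$. The strong convergence therefore holds in this space, and the trace identity $[\dot{\bu}]_S=[\buBarDot]_S+[\bell_{\bhtilde}]_S=0+\bh=\bh$ follows.

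Next, I would derive the variational equation for $\dot{\bu}$ by substituting $\buBarDot=\dot{\bu}-\bell_{\bhtilde}$ in \eqref{eq:buDotBarWeakForm}. The last term on the right of \eqref{eq:buDotBarWeakForm}, namely $-\int_{\Omega\setminus\Gamma}\CC\nabla\bell_{\bhtilde}\vvdot\nabla\bw\,d\bx$, combines with the lift contribution on the left to produce $\int_{\Omega\setminus\overline S}\CC\nabla\dot{\bu}\vvdot\nabla\bw\,d\bx$ (one uses here that $\buBarDot$ has no jump on $\Gamma\setminus\overline{S}$ while $\bell_{\bhtilde}$ has no jump on $\Gamma\setminus\overline{S}$ by the support property of $\bhtilde$, so the integral over $\Omega\setminus\Gamma$ coincides with that over $\Omega\setminus\overline{S}$). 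The remaining terms on the right of \eqref{eq:buDotBarWeakForm} are unchanged, yielding exactly \eqref{eq:buDotWeakForm} for every test $\bv\in H^1_\Sigma(\Omega)$.

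Finally, uniqueness for \eqref{eq:buDotWeakForm} with prescribed jump $[\dot{\bu}]_S=\bh$ reduces, after subtracting $\bell_{\bhtilde}$, to uniqueness for $\buBarDot\in H^1_\Sigma(\Omega)$ satisfying a linear elasticity problem with bilinear form $a_\Gamma$, which is coercive on $H^1_\Sigma(\Omega)$ by Korn's inequality together with the strong convexity \eqref{eq:StrongConvexCond}; this is already implicit in Theorem~\ref{t:MatDerivative}. There is no substantial obstacle, the only point requiring care is verifying that the integrals over $\Omega\setminus\Gamma$ can be rewritten over $\Omega\setminus\overline{S}$, which hinges on the fact that $\bhtilde$ is supported in $\overline{S}$ so that $\bell_{\bhtilde}$ has no spurious jump across $\Gamma\setminus\overline{S}$.
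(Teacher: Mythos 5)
Your proposal is correct and follows essentially the same route the paper takes: the paper obtains the corollary from Theorem~\ref{t:MatDerivative} by noting $\bell_{\tau\bhtilde}=\tau\,\bell_{\bhtilde}$, writing $\dfrac{\Breve{\bu}_\tau-\bu}{\tau}=\dfrac{\buBar_\tau-\bu}{\tau}+\bell_{\bhtilde}$, moving the lift term across \eqref{eq:buDotBarWeakForm}, and using that $\bhtilde=\bzero$ on $\Gamma\setminus\overline{S}$ together with Lemma~\ref{lem: Htilde} to identify the jump and the function space. Your attention to the support property of $\bhtilde$ (so that integrals over $\Omega\setminus\Gamma$ and $\Omega\setminus\overline{S}$ coincide) matches exactly the justification the paper gives after the corollary statement.
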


Above we have used the stability estimates for the forward problem, which imply the trace in $H^{1/2}(\Gamma)$ is continuous with respect to $\tau$. We have also used that
\begin{equation*}
    [\dfrac{\Breve{\bu}_\tau - \bu}{\tau}]_{\Gamma}=\bhtilde,
\end{equation*}
with $\bhtilde= \bzero$ on $\Gamma \setminus \overline{S}$, and employed Lemma \ref{lem: Htilde}.

Now that we have established the existence of the material derivative and derived a formula for it, we will use it to compute the shape derivative, defined in \eqref{eq:ShapeDerDef}.

\begin{theorem} \label{t:ShapeDerivative}
Under the hypotheses of Lemma \ref{lem:DotuBarWeakConv}, if $\bu$ is the unique weak solution of problem \eqref{eq:ForwardProblem} and 
$\bgtilde_\tau$, $S_\tau$ are defined as in \eqref{eq:SlipDeformationDef}, \eqref{eq:GammaTauDef}, respectively, the shape derivative of the functional $\cJ$ is given by:
\begin{align} 
   \dfrac{d}{d\tau} \cJ\left(S_\tau,\bg_\tau\right)\lfloor_{\tau=0} &= 
   - \int_{\Omega\setminus\overline{S}} \CC \left[ \nabla \bu \, D\cU\right] \vvdot \nabla \bw\, d \bx 
   - \int_{\Omega\setminus\overline{S}} \CC \nabla \bu \vvdot \left[ \nabla \bw \, D\cU\right]\, d \bx \nonumber \\
   & + \int_{\Omega\setminus\overline{S}} \CC \nabla \bu \vvdot \nabla \bw\, \dive \cU \, d \bx 
   + \int_S \left[\CC(\nabla \bw) \,\bh\right] \cdot \bn\, d\sigma(\bx) 
    - \int_{\Omega\setminus\overline{S}}  \BB \nabla \bu \vvdot \nabla \bw \, d\bx, \label{eq:ShapeDerFormula}
\end{align}
where $\bw\in H^1_\Sigma(\Omega)$ is the unique weak solution of the adjoint problem:
\begin{align} \label{eq:AdjointProblem}
    \begin{cases}
        \dive \left(\CC \nabla \bw\right) = \bzero, & \text{in } \Omega, \\
        (\CC\nabla \bw)\bnu = (\bu-\bu_m)\,\chi_{\Xi}, & \text{on } \pa\Omega\setminus \Sigma,\\
        \bw = \bzero, & \text{on } \Sigma.
    \end{cases}
\end{align}
\end{theorem}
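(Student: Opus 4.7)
The approach is the standard adjoint/Lagrangian method. I would differentiate $\cJ$ along the deformation, use the adjoint state $\bw$ solving \eqref{eq:AdjointProblem} to eliminate the explicit dependence on the material derivative $\dot{\bu}$ of Corollary \ref{C:uMaterialDer} via integration by parts, and then substitute the weak form \eqref{eq:buDotWeakForm} with $\bv=\bw$ to obtain a closed-form expression involving only $\bu$, $\bw$, $\cU$, and $\bh$.

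First I would observe that, since $\cU$ is supported away from $\pa\Omega$, on the acquisition manifold $\Xi$ the deformation $\phi_\tau$ equals the identity, so the material derivative $\dot{\bu}$ coincides with the limit of $(\bu_\tau - \bu)/\tau$ on $\Xi$. The strong $H^1_\Sigma(\Omega\setminus\overline{S})$ convergence established in Corollary \ref{C:uMaterialDer}, together with the continuity of the trace $H^1(\Omega\setminus\overline{S})\hookrightarrow L^2(\Xi)$, lets me differentiate $\cJ(\tau)$ under the integral to obtain
\begin{equation*}
    \frac{d\cJ}{d\tau}\bigg|_{\tau=0} = \int_\Xi (\bu-\bu_m)\cdot \dot{\bu}\, d\sigma(\bx).
\end{equation*}
Next I would apply Green's identity to the pair $(\bw, \dot{\bu})$ separately on $\Omega^+$ and $\Omega^-$, using the adjoint Neumann datum $(\CC\nabla\bw)\bnu=(\bu-\bu_m)\chi_\Xi$ on $\pa\Omega\setminus\Sigma$ to rewrite the right-hand side above as a boundary integral on $\pa\Omega\setminus\Sigma$. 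Because $\bw\in H^1_\Sigma(\Omega)$ has no jump across $\Gamma$ while $[\dot{\bu}]_S=\bh$ and $[\dot{\bu}]_{\Gamma\setminus\overline{S}}=\bzero$, and because $\dive(\CC\nabla\bw)=\bzero$ in $\Omega$ together with $\bw|_\Sigma=\dot{\bu}|_\Sigma=\bzero$, all interior boundary contributions collapse to a single surface term on $S$:
\begin{equation*}
    \int_\Xi (\bu-\bu_m)\cdot \dot{\bu}\, d\sigma = \int_{\Omega\setminus\overline{S}}\CC\nabla\bw\vvdot\nabla\dot{\bu}\, d\bx + \int_S (\CC\nabla\bw)\bn\cdot\bh\, d\sigma.
\end{equation*}
This surface integral agrees with the one in \eqref{eq:ShapeDerFormula} after using the symmetry of the matrix $\CC\nabla\bw$ (inherited from the major and minor symmetries of $\CC$) to write $(\CC\nabla\bw)\bn\cdot\bh = [\CC(\nabla\bw)\,\bh]\cdot\bn$.

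Finally, since $\bw\in H^1_\Sigma(\Omega)$ is an admissible test function in \eqref{eq:buDotWeakForm}, I would set $\bv=\bw$ there and use the major symmetry of $\CC$ to swap $\CC\nabla\dot{\bu}\vvdot \nabla\bw=\CC\nabla\bw\vvdot\nabla\dot{\bu}$, replacing the volume term from Step 2 by the four volume integrals involving $\BB$, $D\cU$, and $\dive\cU$ that appear in \eqref{eq:ShapeDerFormula}. Combined with the surface term this yields the claimed identity.

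The main obstacle will be the careful bookkeeping in the Green's identity step: I must verify that $\bw$ does not jump across $S$, so that $(\CC\nabla\bw)\bn$ is single valued as an $H^{-1/2}(S)$ trace; that $\dot{\bu}$ has exactly the prescribed jump $\bh$ on $S$ (coming from $[\bu_\tau\circ\phi_\tau]_\Gamma = \bgtilde + \tau\bhtilde$ together with the linearity identity $\bell_{\tau\bhtilde}=\tau\bell_{\bhtilde}$ used to pass from the transplanted solution to $\buBar_\tau$); and that contributions on $\Gamma\setminus\overline{S}$ vanish since both jumps are zero there. Well-posedness of the adjoint problem \eqref{eq:AdjointProblem} in $H^1_\Sigma(\Omega)$, which underpins these manipulations, follows from a standard Lax-Milgram argument for mixed Dirichlet-Neumann elastostatics with zero body force, and the regularity required for the boundary traces is guaranteed by the Lipschitz hypotheses on $\Omega$ and $\CC$.
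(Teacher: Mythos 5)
Your proposal is correct and follows essentially the same route as the paper: differentiate $\cJ$ under the integral using the strong $H^1$ convergence of the difference quotients and the trace theorem, introduce the adjoint state $\bw$ to convert the boundary integral on $\Xi$ into volume integrals over $\Omega^\pm$ plus the jump term $\int_S (\CC\nabla\bw)\bn\cdot\bh\,d\sigma$, and then eliminate $\nabla\dot{\bu}$ by taking $\bv=\bw$ in the weak formulation \eqref{eq:buDotWeakForm} of the material derivative. The bookkeeping points you flag (no jump of $\bw$ or of the traction across $\Gamma$, $[\dot{\bu}]_\Gamma=\bhtilde$ vanishing off $\overline{S}$) are exactly the ones the paper verifies.
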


\begin{remark}
 Using elementary properties of the trace of a matrix, we can rewrite the second integral on the right in \eqref{eq:ShapeDerFormula} as 
\begin{equation*}
  - \int_{\Omega\setminus\overline{S}} \CC \left[ \nabla \bu \, D\cU^T\right] \vvdot \nabla \bw\, d \bx 
\end{equation*}
 where the superscript $T$ denotes the transpose of a matrix. Hence, \eqref{eq:ShapeDerFormula} takes the compact form:
 \beq  \label{eq:PolarizationTensor}
    \dfrac{d}{d\tau} \cJ\left(S_\tau,\bg_\tau\right)\lfloor_{\tau=0} = 
    \int_{\Omega\setminus \overline{S}} \mathbb{M} \left[ \nabla \bu \right] \vvdot \nabla \bw \, d\bx +
     \int_S \left[\CC(\nabla \bw) \,\bh\right] \cdot \bn\, d\sigma(\bx), 
 \eeq
 where the so-called {\em polarization tensor} $\mathbb{M}$ encodes the distributed effect of the infinitesimal movement of the fault.
 $\MM$ is given in terms of its action on matrices by
 \beq \label{eq:PolarizationTensorDef}
     \mathbb{M}[\mathbf{A}] := \BB[\mathbf{A}] -  \CC \left[  \mathbf{A} (D\cU^T +D\cU) \right] + (\dive \cU) \CC[\mathbf{A}],
 \eeq
 for $\mathbf{A}$ any $d\times d$ matrix. 
 Note that, in general, $\mathbb{M}$ does not enjoy the same symmetries as $\CC$ does, unless $\CC$ is constant.
\end{remark}

\begin{proof}
Since $\dfrac{\bu_\tau-\bu}{\tau}$ converges strongly to $\buDot$ in $H^1_\Sigma(\Omega)$ by Corollary \ref{C:uMaterialDer}, hence in $L^2(\pa\Omega)$ by the Trace Theorem, we can differentiate under the integral sign in the definition of the functional $\cJ$:
\begin{equation*}
  \dfrac{d}{d\tau} \cJ\left(S_\tau,\bg_\tau\right)\lfloor_{\tau=0} = \int_\Xi ( \bu - \bu_m)\cdot \buDot \, d\sigma(\bx),
\end{equation*}
where we used the definition of the material derivative \eqref{eq:buDotDef}.

Let now $\bw$ solve the adjoint problem \eqref{eq:AdjointProblem} (this is a standard variational problem that admits a unique weak solution by the Lax-Milgram Theorem, for instance). Then, we can equivalently write the expression above on the right as:
\begin{equation*}
    \dfrac{d}{d\tau} \cJ\left(S_\tau,\bg_\tau\right)\lfloor_{\tau=0} = \int_{\pa\Omega} \left[\CC(\nabla \bw) \, \bnu\right] \cdot \buDot\,d \sigma(\bx).
\end{equation*}
We recall that we can integrate over $\Omega\setminus \Gamma$ or $\Omega\setminus \overline{S}$, because neither $\bu$ nor $\bw$ jump across $\Gamma\setminus \overline{S}$, and that $\Gamma$ splits $\Omega$ into an outer region $\Omega^+$ that touches $\pa\Omega$ and an inner core $\Omega^-$. From \eqref{eq:AdjointProblem}, although $\buDot$ cannot be used as a test function, integrating first in $\Omega^+$ and then in $\Omega^-$ gives that
\begin{equation*}
\int_{\pa\Omega} \left[\CC(\nabla \bw) \, \bnu\right] \cdot \buDot\,d \sigma(\bx)= - \int_{\Omega^+} \CC \nabla \bw \vvdot \nabla \buDot \, d\bx -  \int_{\Omega^-} \CC \nabla \bw \vvdot \nabla \buDot \, d\bx + \int_{\Gamma} \left[\CC(\nabla \bw) \, \bn\right] \cdot [\buDot]_{\Gamma}\,d \sigma(\bx),
\end{equation*}
recalling that by convention $\bn$ is the outer normal to $\Omega_-$. But we know that the jump $[\buDot]_{\Gamma} = \bhtilde$ (so, in fact, $\buDot$ only jumps across $S$), so that we finally have:
\begin{equation*}
   \dfrac{d}{d\tau} \cJ\left(S_\tau,\bg_\tau\right)\lfloor_{\tau=0} = -\int_{\Omega\setminus \overline{S}} \CC \nabla \bw \vvdot \nabla \buDot \, d\bx + \int_{S} \left[\CC(\nabla \bw) \, \bn\right] \cdot \bh\,d \sigma(\bx),
\end{equation*}
where we also used that $\bw$ can be taken as test function for \eqref{eq:buDotWeakForm}.
\end{proof}

We close this section by deriving a {\em boundary shape derivative} from the distributed shape derivative in case $\CC$ is constant, i.e., the rock is homogeneous in the region of interest around the fault. While this formula, which contains only integrals over the fault $S$, may lead to a less stable numerical implementation, it allows a more transparent geometric interpretation. We work with strong solutions of \eqref{eq:ForwardProblem} and \eqref{eq:AdjointProblem}, assuming that $\pa\Omega$, $\S$ and $\bg$, $\bh$ are regular enough. By a strong solution $\bu$ of \eqref{eq:ForwardProblem}, we mean that $\bu \in H^2(\Omega\setminus \overline{S})\cap 
H^1_\Sigma(\Omega\setminus\overline{S})$ and that the problem  \eqref{eq:ForwardProblem} is attained at least pointwise a.e. Similarly, a strong solution $\bw\in H^2(\Omega)\cap H^1_\Sigma(\Omega)$ solves \eqref{eq:AdjointProblem} at least pointwise a.e.\,. From standard regularity theory, weak solutions are strong solutions if $\pa\Omega$ and $S$ are at least of class $C^{1,\alpha}$, $\alpha>0$, and $\bg, \bh \in H^{s}_0(S)$, $s\geq 3/2$.
We only compute the derivative in 2 space dimensions to simplify the resulting expression.

Our starting point is formula \eqref{eq:PolarizationTensor}. We show that we can write $\MM  [\nabla \bu] \vvdot \nabla \bw$ as the divergence of a certain vector $\bb$. Since we assume that $\CC$ is constant, $\BB\equiv 0$. Furthermore, since both $\bu$ and $\bw$ satisfy the homogeneous system of elasticity a.e. in $\Omega\setminus \Gamma$, it it enough to show that:
\begin{align}
 \MM  [\nabla \bu] \vvdot \nabla \bw &= -\dive \bb + \left((\cU \cdot\nabla) \bw\right) \cdot \left(\dive [\CC \nabla \bu] \right)+ \left((\cU\cdot\nabla) \bu\right) \cdot \left(\dive [\CC \nabla \bw]\right), \nonumber
\end{align}
for a suitable vector field $\bb$.
Next, we exploit some elementary tensor calculus. If $\bF$ is a matrix and $\ba$ is a vector, by $\ba\,\bF$ we mean vector-matrix multiplication (suppressing the transpose for notational ease), that is: \ $[\ba \,\bF]_j= \ba_i\, \bF^i_j$, using Einstein's summation convention. Then, it is easy to see that
\beq \label{eq:DivOfProduct}
    \dive(\ba \bF) = \ba \cdot \dive \bF +\nabla \ba \vvdot \bF,
\eeq
where \ $ [\dive \bF]_i=\pa_j\bF^{ij}= \text{Tr\,} (\nabla \bF)^i$.
We let, following the convention above:
\begin{align} 
     &\bb = \ba_1 \bF_1 + \ba_2 \bF_2 - a\,\cU, \qquad \text{with} \nonumber \\
     &\ba_1 = (\cU\cdot \nabla) \bw, \quad \ba_2 = (\cU\cdot \nabla) \bu, \quad  a = \CC[\nabla \bu]\vvdot \nabla \bw,\nonumber \\
     &\bF_1  = \CC[\nabla \bu], \quad \bF_2  = \CC[\nabla \bw],
  \label{eq:VectorbDef}
\end{align}
where $a$ is a scalar field and we have identified the map $\cU$ with a vector field.
We apply \eqref{eq:DivOfProduct} to $\ba_1 \bF_1$:
\begin{equation*}
   \dive(\ba_1\, \bF_1) = \left((\cU\cdot \nabla) \bw\right) \left(\dive [\CC \nabla \bu]\right) + \CC [ \nabla \bu] \vvdot \nabla \left((\cU\cdot \nabla)\bw\right),
\end{equation*}
and note that 
\begin{align}
    [\nabla \left((\cU\cdot \nabla)\bw\right) ]^i_j &= \pa_j (\cU^\ell \pa_\ell \bw^i)  \quad \Rightarrow \quad \nonumber \\
    \nabla \left((\cU\cdot \nabla)\bw\right) &= [\nabla \bw] D\cU +(\cU\cdot \nabla) \nabla \bw. \nonumber
\end{align}
Consequently,
\beq \label{eq:a1F1Div}
 \dive(\ba_1\, \bF_1) = (\left(\cU\cdot \nabla) \bw\right) \left(\dive [\CC \nabla \bu]\right) + \CC [ \nabla \bu] \vvdot [\nabla \bw \,D\cU] + \CC[\nabla \bu] \vvdot \left[ (\cU\cdot \nabla) \nabla \bw\right].
\eeq 
Similarly, 
\begin{align} 
 \dive(\ba_2\, \bF_2) &= (\left(\cU\cdot \nabla) \bu\right) \left(\dive [\CC \nabla \bw]\right) + \CC [ \nabla \bw] \vvdot [\nabla \bu \,D\cU] + \CC[\nabla \bw] \vvdot \left[ (\cU\cdot \nabla) \nabla \bu\right] \nonumber \\
 &=   \left((\cU\cdot \nabla) \bu\right) \left(\dive [\CC \nabla \bw]\right) + \CC [ \nabla \bu D\cU] \vvdot \nabla \bw + \CC[\nabla \bw] \vvdot \left[ (\cU\cdot \nabla) \nabla \bu\right],
 \label{eq:a2F2Div}
\end{align}
where in the last identity we have used the symmetry of $\CC$.
We also recall that \  $\dive(a \,\cU) = a\,\dive \cU +\cU\cdot \nabla a$. Using this formula and \eqref{eq:a1F1Div}-\eqref{eq:a2F2Div}, 
we can write
\begin{align} 
 \dive (\bb) &= (\left(\cU\cdot \nabla) \bw\right) \left(\dive [\CC \nabla \bu]\right) + (\left((\cU\cdot \nabla) \bu\right) \left(\dive [\CC \nabla \bw]\right) + \CC [ \nabla \bu] \vvdot [\nabla \bw \,D\cU]  \nonumber \\
 & \qquad \qquad + \CC \left[ (\cU\cdot \nabla) \nabla \bu\right] \vvdot [\nabla \bw] + \CC[\nabla \bu] \vvdot \left[ (\cU\cdot \nabla) \nabla \bw\right] + \CC[\nabla \bw] \vvdot \left[ (\cU\cdot \nabla) \nabla \bu\right] \nonumber \\
 & \qquad \qquad\qquad -(\dive \cU) \CC[\nabla \bu]\vvdot \nabla \bw - (\cU\cdot \nabla)
 \left( \CC[\nabla \bu] \vvdot \nabla \bw\right), \label{eq:VectorbDivExpanded}
\end{align}
pointwise a.e in $\Omega\setminus \Gamma$.

Now, since $\CC$ is constant, exploiting again the symmetry of $\CC$,
\begin{equation*}
  \cU^i \pa_i \left( \CC[\nabla \bu] \vvdot \nabla \bw\right)= \cU^i \, \CC[\nabla \bu]\vvdot \pa_i \nabla \bw + \cU^i\, 
  \CC[\pa_i \nabla \bu]\vvdot \nabla \bw =  \CC[\nabla \bu]\vvdot [(\cU^i \pa_i) \nabla \bw] + \CC[\nabla \bw] \vvdot [
  (\cU^i \pa_i) \nabla \bu],
\end{equation*}
so the last term in \eqref{eq:VectorbDivExpanded} cancels other two terms in that expression and we conclude that
\begin{align}
    \dive (\bb) &= (\left(\cU\cdot \nabla) \bw\right) \left(\dive [\CC \nabla \bu]\right) + (\left((\cU\cdot \nabla) \bu\right) \left(\dive [\CC \nabla \bw]\right)   \nonumber \\
 & \qquad \qquad + \CC [ \nabla \bu] \vvdot [\nabla \bw \,D\cU]+ \CC \left[ (\cU\cdot \nabla) \nabla \bu\right] \vvdot [\nabla \bw] 
  -(\dive \cU) \CC[\nabla \bu]\vvdot \nabla \bw. \label{eq:VectorbDivFinal}
\end{align}
We can now derive the boundary shape derivative. We introduce some convenient notation. Let $\{\bt,\bn\}$ be  a curvilinear frame on $\Gamma$, hence on $S$, with $\bn$ coinciding with the unit outer normal to $\Omega^-$.
We denote the tangential projection by:
\begin{equation*}
   \bF_{\tang} :=\bF \bt, \qquad \ba_{\tang} := \ba \cdot \bt,
\end{equation*}
where $\bF$ is a matrix and $\ba$ is a vector, and according to our notation $\bF \bt$ denotes matrix-vector multiplication. 
Similarly, we denote the normal projection by:
\begin{equation*}
  \bF_{n} :=\bF \bn, \qquad \ba_n := \ba \cdot \bn.
\end{equation*}
We also employ the short-hand notation:
\begin{equation*}
   \pa_{\tang} := \bt\cdot \nabla, \quad \pa_n := \bn \cdot \nabla. 
\end{equation*}

\begin{theorem} \label{t:BoundaryShapeDer}
    In the hypotheses of Lemma \ref{lem:DotuBarWeakConv}, if in addition $\Omega\subset \RR^2$  and the fault $S$ are of class $C^{1,\alpha}$, $\alpha>0$, and $\CC$ is constant, the boundary shape derivative of the functional $\cJ$ is given by:
    \begin{equation} \label{eq:BoundaryShapeDerFormula}
        \dfrac{d}{d\tau} \cJ(S_\tau,\bg_\tau)\lfloor_{\tau=0}  = 
        \int_S \left(\CC[\nabla \bw]\right)_n (\cU_{\tang} \pa_{\tang} \bg) -  \cU_n \,\left(\CC[\nabla \bw]\right)_{\tang} (\pa_{\tang} \bg)+ \left(\CC[\nabla \bw]\right)_n (\bh) \, d\sigma(\bx),
    \end{equation}
    where $\bu$ is a strong solution of \eqref{eq:ForwardProblem} and $\bw$ is a strong solution of \eqref{eq:AdjointProblem}.
\end{theorem}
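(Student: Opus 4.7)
The plan is to pass from the distributed expression \eqref{eq:PolarizationTensor} to a boundary-only integral on $S$ by recognizing the volume integrand $\MM[\nabla\bu]\vvdot\nabla\bw$ as a divergence and then applying the divergence theorem. All the algebraic preparation has been carried out in \eqref{eq:VectorbDef}--\eqref{eq:VectorbDivFinal}, so the remaining work is (i) to dispose of the two PDE terms in \eqref{eq:VectorbDivFinal}, (ii) to integrate by parts on the two subdomains $\Omega^\pm$, and (iii) to evaluate the resulting jump $[\bb\cdot\bn]_S$ on the fault.

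Since $\bu$ is a strong solution of \eqref{eq:ForwardProblem}, $\dive(\CC\nabla\bu)=\bzero$ a.e.\ in $\Omega\setminus\overline S$, and since $\bw$ is a strong solution of \eqref{eq:AdjointProblem}, $\dive(\CC\nabla\bw)=\bzero$ a.e.\ in $\Omega$. Substituting these into \eqref{eq:VectorbDivFinal} kills its first two terms. Because $\BB\equiv\bzero$ when $\CC$ is constant, the definition \eqref{eq:PolarizationTensorDef} combined with the total symmetry of $\CC$ lets us identify the three remaining summands, up to an overall sign, as $\MM[\nabla\bu]\vvdot\nabla\bw$; thus $\MM[\nabla\bu]\vvdot\nabla\bw = -\dive\bb$ pointwise a.e.\ in $\Omega\setminus\Gamma$. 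Applying the divergence theorem on $\Omega^+$ and $\Omega^-$ separately then gives
$$\int_{\Omega\setminus\overline S}\MM[\nabla\bu]\vvdot\nabla\bw \,d\bx = -\int_{\pa\Omega}\bb\cdot\bnu \,d\sigma(\bx) + \int_\Gamma [\bb\cdot\bn]_\Gamma \,d\sigma(\bx).$$
The $\pa\Omega$ integral vanishes because $\cU$ is supported in a neighborhood of $\Gamma$ disjoint from $\pa\Omega$, so each of $\ba_1$, $\ba_2$, and $a\cU$ is zero there. Moreover, $\bw$ is continuous across $\Gamma$ and $\bu$ only jumps across $S\subset\Gamma$, so the interface integral collapses to $\int_S[\bb\cdot\bn]_S\,d\sigma(\bx)$.

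The delicate step is the evaluation of $[\bb\cdot\bn]_S$. From \eqref{eq:VectorbDef},
$$[\bb\cdot\bn]_S = ((\cU\cdot\nabla)\bw)\cdot[(\CC\nabla\bu)\bn]_S + [(\cU\cdot\nabla)\bu]_S\cdot(\CC\nabla\bw)\bn - [\CC\nabla\bu\vvdot\nabla\bw]_S\,\cU_n.$$
The first summand vanishes by the traction-continuity condition $[(\CC\nabla\bu)\bn]_S=\bzero$ in \eqref{eq:ForwardProblem}. For the remaining two, I would decompose $\cU=\cU_\tang\bt+\cU_n\bn$ and $\nabla\bu=(\pa_\tang\bu)\otimes\bt+(\pa_n\bu)\otimes\bn$; the tangential jump is explicit, $[\pa_\tang\bu]_S=\pa_\tang\bg$, but the normal jump $[\pa_n\bu]_S$ is not available from the data. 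The main obstacle is precisely that both surviving terms carry this unknown normal jump. The resolution, and the reason the final formula closes in terms of $\bg$ alone, is the identity $\CC[\ba\otimes\bn]\vvdot\nabla\bw = \ba\cdot(\CC\nabla\bw)\bn$ for any vector $\ba$, valid by the total symmetry of $\CC$; applied with $\ba=[\pa_n\bu]_S$, it makes the two unwanted contributions cancel exactly. What survives is
$$(\CC[\nabla\bw])_n\cdot(\cU_\tang\pa_\tang\bg) - \cU_n\,(\CC[\nabla\bw])_\tang\cdot(\pa_\tang\bg),$$
and adding the already-boundary term $\int_S[\CC(\nabla\bw)\bh]\cdot\bn\,d\sigma(\bx) = \int_S(\CC[\nabla\bw])_n\cdot\bh\,d\sigma(\bx)$ from \eqref{eq:PolarizationTensor} produces \eqref{eq:BoundaryShapeDerFormula}.
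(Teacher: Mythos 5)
Your proposal is correct and follows essentially the same route as the paper: identify $\MM[\nabla\bu]\vvdot\nabla\bw$ with $-\dive\bb$ using the strong equations for $\bu$ and $\bw$, integrate by parts on $\Omega^{\pm}$ (the $\pa\Omega$ term dying because $\cU$ is supported away from $\pa\Omega$), and evaluate $[\bb\cdot\bn]_S$ using traction continuity and the decomposition into tangential and normal derivatives. The one point worth noting is that your treatment of the delicate step is actually more explicit than the paper's: you track the unknown normal jump $[\pa_n\bu]_S$ through both surviving terms and exhibit its exact cancellation via the symmetry identity $\CC[\ba\otimes\bn]\vvdot\nabla\bw=\ba\cdot(\CC\nabla\bw)\bn$, whereas the paper's displayed chain of identities (e.g.\ $[(\cU\cdot\nabla)\bu]_\Gamma=\cU_\tang\,\pa_\tang[\bu]_\Gamma$) silently drops these normal-jump contributions term by term, relying on the same cancellation implicitly.
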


\begin{proof}
Since $\bu$ solves \eqref{eq:ForwardProblem} and $\bw$ solves \eqref{eq:AdjointProblem}, from \eqref{eq:VectorbDivFinal} it follows by integrating by parts in $\Omega^{\pm}$ that 
\begin{align}
       \dfrac{d}{d\tau} \cJ(S_\tau,\bg_\tau)\lfloor_{\tau=0}  &= -\int_{\Omega\setminus \Gamma} \dive \bb\, d\bx + \int_S \left(\CC (\nabla \bw)\right)_n \cdot \bh \,d\sigma(\bx)  \nonumber \\
       &= \int_{\Gamma} \left[(\bb)_n \right]_\Gamma \, d\sigma(\bx) + \int_S \left(\CC (\nabla \bw)\right)_n \cdot \bh \,d\sigma(\bx)\nonumber \\
        &= \int_\Gamma \left\{ \left[\left(\CC (\nabla \bw)\right)_n\right]_{\Gamma} \cdot (\cU\cdot \nabla) \bw + \left(\CC (\nabla \bw)\right)_n \cdot \left[(\cU\cdot \nabla) \bu \right]_\Gamma \right. \nonumber \\
        &\qquad \qquad \qquad \left.- \cU_n \, \left[ \CC(\nabla \bu) \vvdot \nabla \bw\right]_\Gamma
        \right\}\, d\sigma(\bx) + \int_S \left(\CC (\nabla \bw)\right)_n \cdot \bh \,d\bx. \label{eq:BoundaryShapeDerFormulaReduced}
\end{align}
Above we have used that $\cU$, $\nabla \bw$, and $\CC(\nabla \bw)_n$ do not jump across $\Gamma$. Indeed, $\cU$ is Lipschitz continuous on the whole $\Omega$, while $\bw \in H^2(\Omega)$, so $[(\nabla \bw)_\tang]_{\Gamma}= \bzero$ since $\bw$ is continuous and $\Gamma$ is regular. Also, the jump in the traction $\CC(\nabla \bw)_n$ across $\Gamma$ is zero from \eqref{eq:AdjointProblem} integrating by parts in $\Omega^{\pm}$. Then, as in \cite{MR04}, one can conclude from these two facts that $[(\nabla \bw)_n]_{\Gamma}= \bzero$, hence the whole gradient of $\bw$ does not jump across $\Gamma$.
On the other hand, similarly, we have
\begin{flalign}
     \left[(\cU\cdot \nabla) \bu \right]_\Gamma &= \cU_\tang \, \pa_\tang \left[ \bu \right]_\Gamma= \pa_\tang \bgtilde, \nonumber \\
      \left[ \CC(\nabla \bu) \vvdot \nabla \bw\right]_\Gamma & = \left[ \CC(\nabla \bu) \right]_\Gamma \vvdot \nabla \bw=
     \left(\left[ \CC(\nabla \bu)  \right]_\Gamma \right)_\tang \vvdot (\pa_\tang \bw)  \nonumber \\
      \,  &= \left(\CC(\nabla \bw)\right)_\tang   \vvdot  \pa_\tang [\bu]_{\Gamma} = \left(\CC(\nabla \bw)\right)_\tang   \vvdot  \pa_\tang \bgtilde,\nonumber
\end{flalign}
where in the last identity we used that the traction $\CC(\nabla \bu)_n$ does not jump across $\Gamma$ from \eqref{eq:ForwardProblem} and the symmetry of $\CC$. Inserting these expression into \eqref{eq:BoundaryShapeDerFormulaReduced} finally gives formula \eqref{eq:BoundaryShapeDerFormula}.
\end{proof}

\section{Numerical implementation} \label{s:numerics}

In this section, we implement a reconstruction algorithm based on a gradient descent method that uses the shape derivative \eqref{eq:ShapeDerFormula} of the functional defined in \eqref{eq:JfunctDef}.
While the algorithm is applicable in both two and three space dimensions, our numerical experiments are done in the two-dimensional case, and hence we focus on describing the method in 2D. The two-dimensional problem can be seen as looking at vertical slices of the full 3D problem and neglecting any stress across different slices. This is a simplification, but our numerical experiments are a proof of concept that the algorithm can recover the fault and the slip under suitable assumptions. We reserve to address a more realistic set-up for the numerical implementation in future work.

The reconstruction procedure is based on the gradient descent algorithm proposed in \cite{BMPS18} to reconstruct a piecewise-constant conductivity that jumps across a polygonal partition  in a composite material from current-to-voltage boundary measurements. There are significant differences between the case considered in \cite{BMPS18} and here. First of all, we deal with a system of PDEs and not a scalar PDE. Furthermore, we have non-homogeneous jumps across the fault. In particular the solution is not $H^1$ regular across the fault. Lastly, we measure only on part of boundary and we measure only once (twice in some unstable cases). These additional challenges make the numerical implementation more sensitive, especially to how the jump in the displacement $\bu$ across the fault is treated.

We concentrate primarily on recovering the fault geometry $S$, assuming the slip field $\bg$ is known. Indeed, once $S$ is determined, the problem of reconstructing $\bg$ is linear. We will consider both situations addressed in our prior works \cite{ABMdH19, ABM23}, that is, the case when  $\bg$ vanishes at the endpoints of the line $S$ and is in $H^{\frac{1}{2}}_{00}(S)$, and the case that $\bg$ is constant on $S$, even though the theoretical framework for the distributed shape derivative exploits a variational formulation and hence applies to the first case.

To solve both the forward and adjoint problems (see \eqref{eq:ForwardProblem} and \eqref{eq:AdjointProblem}), we employ the Discontinuous Galerkin (DG) method, which is non conforming and lends itself to tackling problems with inhomegenous jumps. Other numerical methods have been used for shape optimization. In the context of elasticity, among the many results we mention the recent works  \cite{RLL20, KP22}, where the boundary immerse method and immersed interface finite elements are employed.

\begin{figure}[H] 
\includegraphics[scale=0.35]{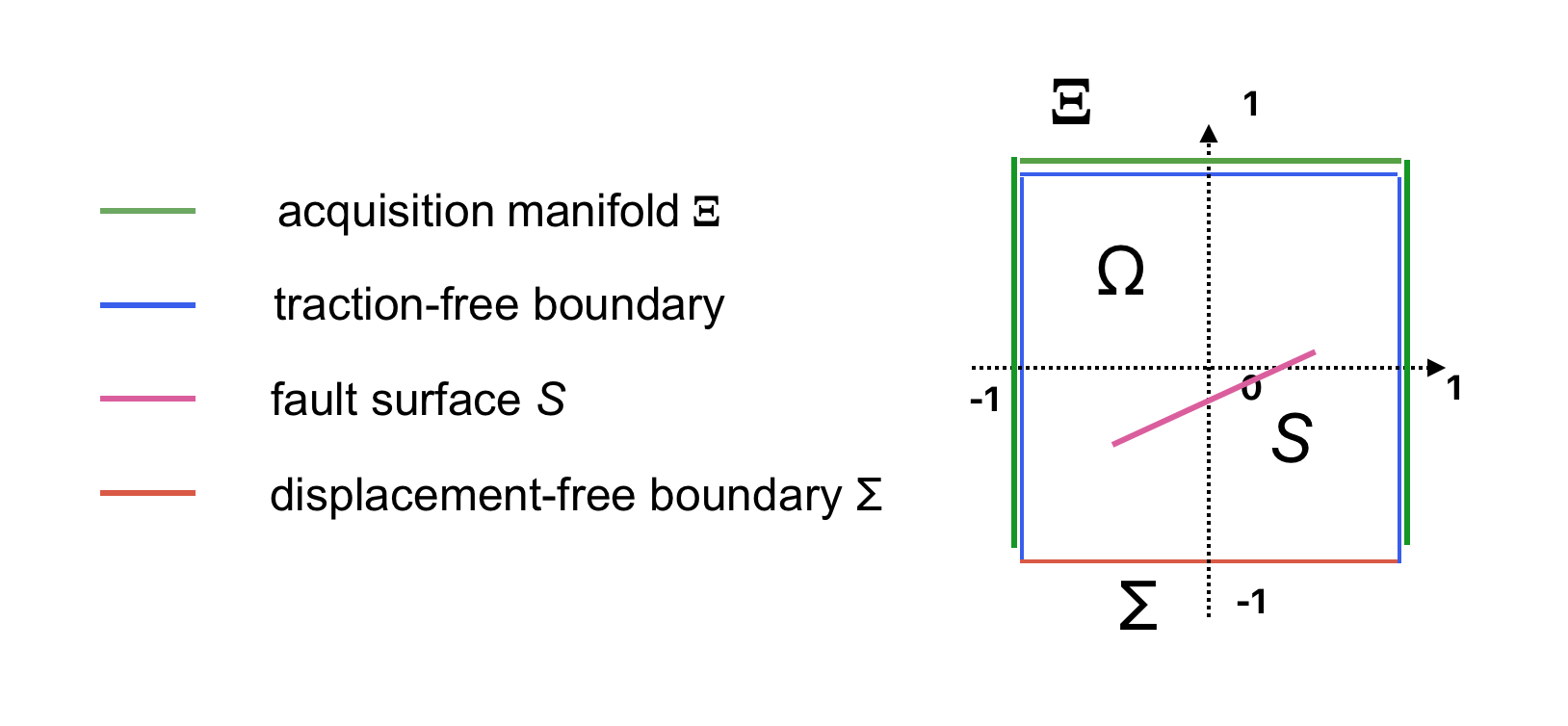}
\caption{Geometric set up for the numerical experiments.} \label{f:NumericalSetUp}
\end{figure}

We next discuss the DG method in the context of our problem.
For the numerical implementation, we let the domain $\Omega = [-1,1] \times [-1,1]$ and $S$ a segment, positioned away from the boundary. Because of the inherent instabilities in the problem, although unrealistic for the geophysical problem  that motivates our work, we find that we need to measure on a sufficiently large part of the boundary. Recall that the acquisition manifold $\Xi$ where the measurements are performed lies on the traction-free part of the boundary. Hence, we impose homogeneous Neumann conditions on three sides of the square $\Omega$ and homogeneous Dirichlet conditions on the fourth side identified with the set $\Sigma=\{(x,y)\ ; y=1, -1\leq x\leq 1\}$ (See Figure \ref{f:NumericalSetUp}).

\subsection{Discontinuous Galerkin (DG) method}

We consider a family of shape-regular partitions $\mathcal{T}_h$ where $0<h\ll 1$. These partitions consist of non-overlapping triangles $\mathcal{K}$ such that $\overline{\Omega}=\cup_{\mathcal{K}\in \mathcal{T}_h}\overline{\mathcal{K}}$. Specifically, $h=\max_{\mathcal{K}\in \mathcal{T}_h}h_{\mathcal{K}}$, where $h_{\mathcal{K}}=\textrm{diam}(\mathcal{K})$. 
The mesh conforms to the fault $S$ as follows.  We construct a trapezoidal region inside $\Omega$ with one side lying on the displacement-free part $\Sigma$ of $\pa\Omega$ and  one side agrees with the fault segment $S$. We then align the computation grid with $S$ (see Figure \ref{fig:mesh}). 

\begin{figure}[h!]
  \centering
  \begin{subfigure}[b]{0.4\textwidth}
  \includegraphics[width=0.7\textwidth]{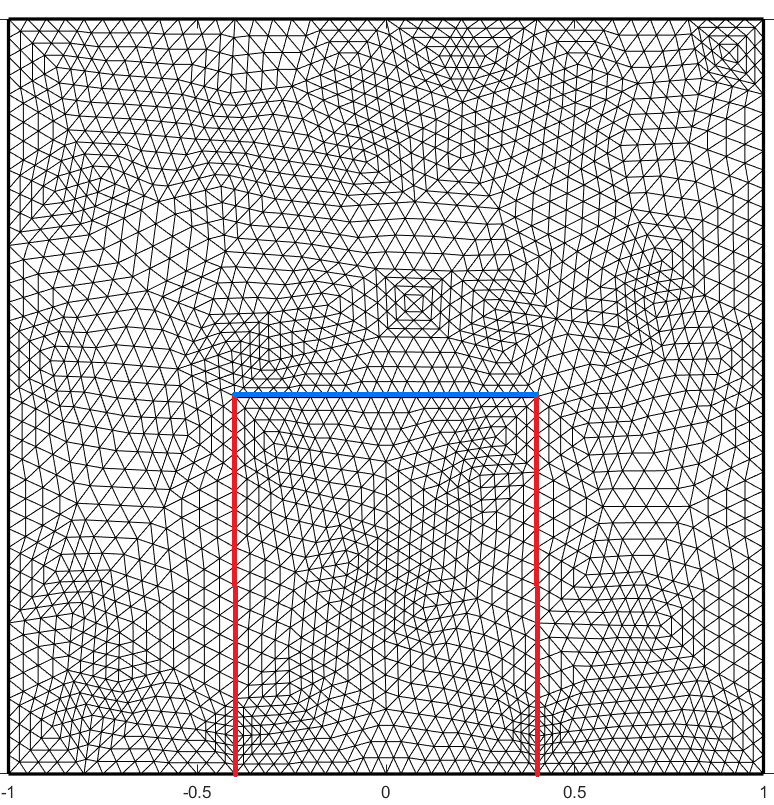}
  \caption{An example of the finer mesh used to solve the forward problem \eqref{eq:ForwardProblem} and generate the synthetic measurements, featuring the chosen trapezoidal region highlighted in red, which contains the dislocation line in blue.}\label{fig:mesh}
\end{subfigure}
\hfill
 \begin{subfigure}[b]{0.4\textwidth}
  \includegraphics[width=0.75\textwidth]{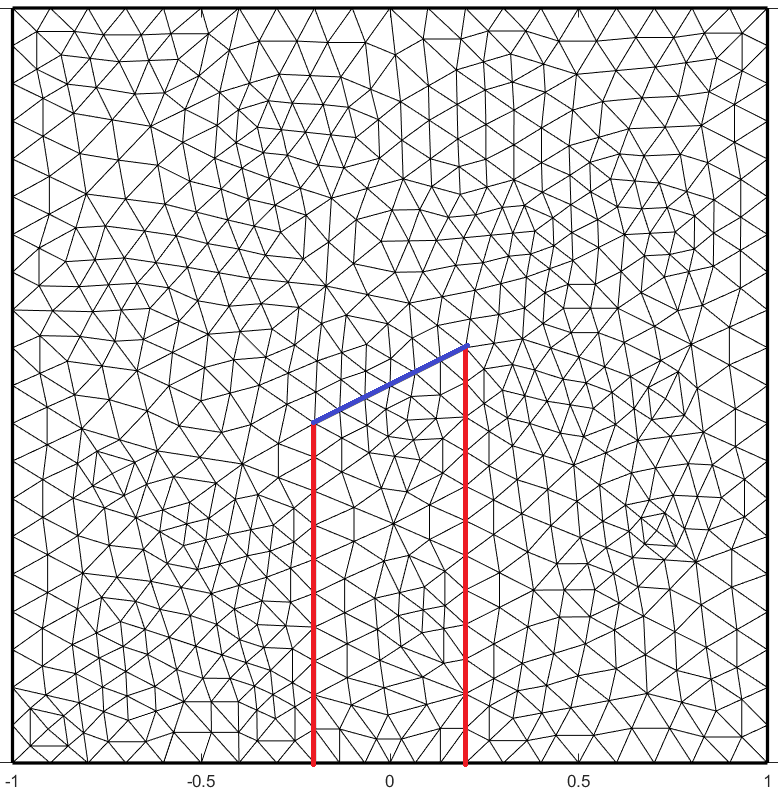}
  \caption{An example of the coarser mesh used to solve the inverse problem. The chosen trapezoidal region, highlighted in red, contains the dislocation line, which is shown in blue.
}\label{fig:mesh_ip}
\end{subfigure}
\caption{An example of the meshes employed for generating measurements and for solving the optimization problem.}
\label{fig:meshes}    
\end{figure}
We define $ F_I $ as the set of all interior sides. An interior side $\gamma$ in $F_I$ is characterized by the existence of two adjacent elements $ K^+ $ and $ K^- $ in $ \mathcal{T}_h $ such that $\gamma = \partial K^+ \cap \partial K^-$. For clarity in presenting the DG formulation, we distinguish between the sides that belongs to $S$ from the others,  which belong to $\mathcal{F}_I$.

Similarly, we label the sides of the elements that touch the boundary $\pa\Omega$ according to whether Dirichlet and Neumann conditions are imposed, grouping them into the families $ F_D $ and $ F_N $, respectively:
\begin{equation*}
F_D = \left\{ \gamma \mid \exists K \text{ such that } \gamma \subseteq \partial K \cap \partial \Omega_D \right\}
\end{equation*}
\begin{equation*}
F_N = \left\{ \gamma \mid \exists K \text{ such that } \gamma \subseteq \partial K \cap \partial \Omega_N \right\}
\end{equation*}
This definition assumes that the mesh conforms to the boundary decomposition of $ \partial \Omega $; that is, every side $\gamma$ belongs exclusively to either $ F_D $ or $ F_N $.\\

Next, given an integer $r\geq 1$ we define the finite-dimensional space of DG elements by:
\begin{equation*}
V_h^r = \left\{ \bv_h \in L^2(\Omega) \mid \bv^j_h \vert_K \in \mathcal{P}^r(K), \; j=1,2, \text{ for all } K \in \mathcal{T}_h \right\}
\end{equation*}
where $ \mathcal{P}^r(K) $ denotes the space of polynomials of total degree $ r $ defined on the element $ K $ and $\bv^j$ denotes the $j$-th component of the vector field $\bv$. 
The approximate solution will be sought in the space $V_{h}^r$, which is also the space of test functions.

\subsubsection*{Trace operators}
To handle piecewise discontinuous functions, we use appropriate trace operators. Let $ \gamma \in \mathcal{F}_I $ be an interior side shared by two elements $ K^+ $ and $ K^- $ of $ \mathcal{T}_h $, with unit normal vectors $ \bn^+ $ and $ \bn^- $ pointing outward from $ K^+ $ and $ K^- $, respectively. On $ \gamma $, we define the average and jump operators for sufficiently regular vector-valued and tensor-valued functions $ \bv $ and $ \boldsymbol{\upsigma}$ as follows:
\begin{equation}\label{eq:trace_operators}
\begin{aligned}
\{\bv\} &= \frac{1}{2} (\bv^+ + \bv^-),\ \qquad\qquad \{\boldsymbol{\upsigma}\} = \frac{1}{2} (\boldsymbol{\upsigma}^+ + \boldsymbol{\upsigma}^-)\\
\textrm{\textlbrackdbl} \bv \textrm{\textrbrackdbl} &= \bv^+ \otimes \bn^+ + \bv^- \otimes \bn^-,\quad
\textrm{\textlbrackdbl} \boldsymbol{\upsigma} \textrm{\textrbrackdbl} = \boldsymbol{\upsigma}^+ \bn^+ + \boldsymbol{\upsigma}^- \bn^-
\end{aligned}
\end{equation}
where $ \bv \otimes \bn = \frac{1}{2} (\bv \bn^T + \bn \bv^T) $. \\
The jump operator, defined using normal unit vectors, transforms a vector-valued function into a tensor-valued function and vice versa. This approach is preferable in numerical schemes compared to the common definition $ \textrm{\textlbrackdbl} \bv \textrm{\textrbrackdbl} = \bv^+ - \bv^- $, as it uses a sum instead of a difference, leveraging its commutative property. This ensures that the operator is independent of the element numbering $ (K^+ $ and $ K^-) $ during implementation.
For edges $ \gamma $ on the boundary of the domain, i.e., $ \gamma \in \mathcal{F}_D \cup \mathcal{F}_N $, we set:
\begin{equation*}
\begin{aligned}
\{\bv\} &= \bv^+,\ \qquad
\textrm{\textlbrackdbl} \bv \textrm{\textrbrackdbl} = \bv^+ \otimes \bn\\
\{\boldsymbol{\upsigma}\} &= \boldsymbol{\upsigma}^+,\qquad
\textrm{\textlbrackdbl} \boldsymbol{\upsigma} \textrm{\textrbrackdbl} = \boldsymbol{\upsigma}^+ \bn\\
\end{aligned}
\end{equation*}

\subsubsection*{Discontinuous Galerkin formulation}
In this section, we assume that the solution of \eqref{eq:ForwardProblem} is sufficiently regular so as to justify integration by parts.
Obtaining the weak formulation in the discontinuous Galerkin form is possible, in principle, even for solutions and test functions belonging to \( H^1(\Omega) \), provided that the traces of the gradient on the boundary of the elements are considered in appropriate distribution spaces. However, to achieve global a priori error estimates in the energy norm, higher regularity of the solution, such as assuming the solution in $H^2(\Omega)$,  is needed \cite{RivShaWhe03,SchXuZho06}, similarly to what happens in classical finite element methods. By construction, both the approximate solution as well as the test function belong to the space $V_h^r$, and hence have the required regularity.

To derive a variational formulation for the DG approximate solution, we begin by multiplying the first equation in \eqref{eq:ForwardProblem} by a test function $ \bv \in V_h^r $. For notational convenience, we extend $\bu$ and $\bv$ by zero to $\overline{\Omega}^c$, so that on $\pa\Omega$ the jump of $\bu$ and $\bv$ agrees with their trace value.
We then integrate over a generic element $ K \in \mathcal{T}_h $ and apply integration by parts. By summing over all elements $ K \in \mathcal{T}_h $, we obtain:
\begin{equation}\label{eq:DG_variational_formulation}
  \sum_{K \in \mathcal{T}_h} \int_K \CC \nablahat \bu : \nablahat \bv \, dx 
 = \sum_{K \in \mathcal{T}_h} \int_{\partial K} (\CC \nablahat \bu \bm{n}) \cdot \bv \, dx.
\end{equation}
We note that on the right-hand side of the identity above,  the integrals over the sides of triangles in the partition are computed twice for triangles sharing a common side, once with the outward normal vector $ \bn^+ $ and once with $ \bn^- $.

Since $\bu$ solves \eqref{eq:ForwardProblem}, the traction is zero on all sides in $\cF_N$, so that we can rewrite \eqref{eq:DG_variational_formulation} as follows:
\begin{equation*}
  \sum_{K \in \mathcal{T}_h} \int_{\partial K} (\CC \nablahat \bu \bn) \cdot \bv \, dx 
= \sum_{\gamma \in \mathcal{F}_I\cup S} \int_{\gamma} \left[ (\CC\nablahat \bu)^+ \bn^+ \cdot \bv^+ + (\CC\nablahat \bu)^- \bn^- \cdot \bv^-) \right] d\sigma(\bx) 
+ \sum_{\gamma \in \mathcal{F}_D} \int_{\gamma} (\CC\nablahat \bu \bn) \cdot \bv \, d\sigma(\bx),
\end{equation*}
Next, from the definitions of the average and jump operators given in \eqref{eq:trace_operators} we have that 
\begin{equation}\label{eq:Integral_bound_DG}
\begin{aligned}
\sum_{\gamma \in \mathcal{F}_I\cup S} \int_{\gamma} \left[ (\CC \nablahat \bu)^+ \bn^+ \cdot \bv^+ +(\CC \nablahat \bu)^- \bn^- \cdot \bv^-\right] d\sigma(\bx) 
+ \sum_{\gamma \in \mathcal{F}_D} \int_{\gamma} (\CC\nablahat \bu \bn) \cdot \bv \, d\sigma(\bx)\\
= \sum_{\gamma \in \mathcal{F}_I \cup \mathcal{F}_D\cup S} \int_{\gamma} \{\CC\nablahat\bu\} : \textrm{\textlbrackdbl} \bv \textrm{\textrbrackdbl}\, d\sigma(\bx) 
+ \sum_{\gamma \in \mathcal{F}_I\cup S} \int_{\gamma}  \textrm{\textlbrackdbl}\CC\nablahat \bu  \textrm{\textrbrackdbl} \cdot \{\bv\}\, d\sigma(\bx).
\end{aligned}
\end{equation}
The second integral term on the right-hand side vanishes as $ \textrm{\textlbrackdbl}\CC\nablahat \bu \textrm{\textrbrackdbl}=0$ on every interior side of the partition and across $S$, again using that $\bu$ is a variational solution of \eqref{eq:ForwardProblem}. The first integral term on the right-hand side does not vanish in general, because the test functions in $V^r_h$ may jump across the sides of the partition. Since the bilinear form on the left-hand side of \eqref{eq:DG_variational_formulation} is symmetric, for numerical accuracy it is important to preserve symmetry after integration by parts. To this end, using that the jump $\textrm{\textlbrackdbl}\bu\textrm{\textrbrackdbl}=0$ on $\cF_I\cup \cF_D$, $\textrm{\textlbrackdbl}\bu\textrm{\textrbrackdbl}=\bg$ on $S$,
and inserting \eqref{eq:Integral_bound_DG} for the right-hand side of 
\eqref{eq:DG_variational_formulation},
we obtain the following variational formulation, which must be satisfied by the DG approximate solution:
\begin{equation}\label{eq:DG_aux2}
\begin{aligned}
    \sum_{K \in \mathcal{T}_h} \int_K \CC \nablahat \bu : \nablahat \bv \, dx &- \sum_{\gamma \in \mathcal{F}_I \cup \mathcal{F}_D\cup S} \int_{\gamma} \{\CC\nablahat\bu\} : \textrm{\textlbrackdbl} \bv \textrm{\textrbrackdbl}  d\sigma(\bx) -\sum_{\gamma \in \mathcal{F}_I \cup \mathcal{F}_D\cup S} \int_{\gamma} \{\CC\nablahat\bv\} : \textrm{\textlbrackdbl} \bu \textrm{\textrbrackdbl}  d\sigma(\bx)\\
    &=-\sum_{\gamma\in S}\int_{\gamma}\bg\otimes \bn^+ : \{\CC\nablahat \bv\}\, d\sigma(\bx) 
\end{aligned}
\end{equation}
It has been shown in \cite{AntMazQuaRap12,AntAyuMazQua16} that adding a penalization of jumps in the form
\begin{equation*}
\int_{\gamma} \eta \textrm{\textlbrackdbl} \bu \textrm{\textrbrackdbl} : \textrm{\textlbrackdbl} \bv \textrm{\textrbrackdbl} d\sigma(\bx)
\end{equation*}
ensures the stability of the numerical method while preserving symmetry. It is crucial to choose the penalization parameter appropriately.
We define 
$\eta \in L^1(\mathcal{F}_I \cup \mathcal{F}_D\cup S)$ as follows:
\begin{equation*}
\eta_{\gamma} = \beta \mathfrak{C} \frac{r^2}{h},\ \qquad \forall \gamma \in \mathcal{F}_I \cup \mathcal{F}_D\cup S, 
\end{equation*}
where $\beta$ is a positive constant, $h$ is the mesh size, $r$ is the polynomial degree, and $\mathfrak{C} (\bx) := \left( \sum_{i,j,h,k} |\CC_{ijhk}(\bx)|^2 \right)^{1/2}$ is the discrete $2$-matrix norm .
As the mesh is refined, specifically as $h \to 0$, the parameter $\eta$ increases, thereby providing greater stabilization. 
The penalized DG variational formulation is finally given by:
\begin{equation}
\begin{aligned}
    &\sum_{K \in \mathcal{T}_h} \int_K \CC \nablahat \bu : \nablahat \bv \, dx - \sum_{\gamma \in \mathcal{F}_I \cup \mathcal{F}_D\cup S} \int_{\gamma} \{\CC\nablahat\bu\} : \textrm{\textlbrackdbl} \bv \textrm{\textrbrackdbl}  d\sigma(\bx)\\
    &-\sum_{\gamma \in \mathcal{F}_I \cup \mathcal{F}_D\cup S} \int_{\gamma} \{\CC\nablahat\bv\} : \textrm{\textlbrackdbl} \bu \textrm{\textrbrackdbl}  d\sigma(\bx)+\sum_{\gamma\in \mathcal{F}_I\cup \mathcal{F}_D\cup S}\int_{\gamma} \eta \textrm{\textlbrackdbl} \bu \textrm{\textrbrackdbl} : \textrm{\textlbrackdbl} \bv \textrm{\textrbrackdbl} d\sigma(\bx)\\
    &=-\sum_{\gamma\in S}\int_{\gamma}\bg\otimes \bn^+ : \{\CC\nablahat \bv\}\, d\sigma(\bx)+\sum_{\gamma\in S}\int_{\gamma} \eta \bg\otimes \bn^+ : \textrm{\textlbrackdbl} \bv \textrm{\textrbrackdbl} d\sigma(\bx).
\end{aligned} \label{eq:DGFormPenalized}
\end{equation}
Then the DG approximate solution $\bu_h\in V^r_h$ of the forward problem \eqref{eq:ForwardProblem}  must satisfy \eqref{eq:DGFormPenalized} for all $v\in V^r_h$, see \cite{RivShaWhe03,SchXuZho06,ArnBreCocMar01,AntMazQuaRap12,AntAyuMazQua16}. 
We use a similar penalized formulation to compute the DG solution $\bw_h\in V^r_h$ of the adjoint problem \eqref{eq:AdjointProblem}. The numerical implementation then follows the standard approach for finite elements methods.

In Figure \ref{fig:sol_forward_pb}, we plot an example of the numerical solution for the forward problem \eqref{eq:ForwardProblem},  obtained by using linear elements, that is, polynomials of degree $1$ on each element $K$, and setting the mesh size $h=0.01$. We consider the case of a horizontal dislocation  with vertices at $(-0.4,0)$ and $(0.4,0)$ and a slip field vanishing at the endpoint of the dislocation segment given by 
\begin{equation}
\bg(s) = \left( 
10 \, \left( -\left(\frac{100}{39}\right)^{12} \, s^{12} + 1 \right) \,\chi_{(-\frac{39}{100}, \frac{39}{100})}(s),
\,20 \, \left( -\left(\frac{100}{39}\right)^{12} \, s^{12} + 1 \right) \, \chi_{(-\frac{39}{100}, \frac{39}{100})}(s) 
\right),
\end{equation}

\begin{figure}[H]
  \centering
  \includegraphics[width=\textwidth]{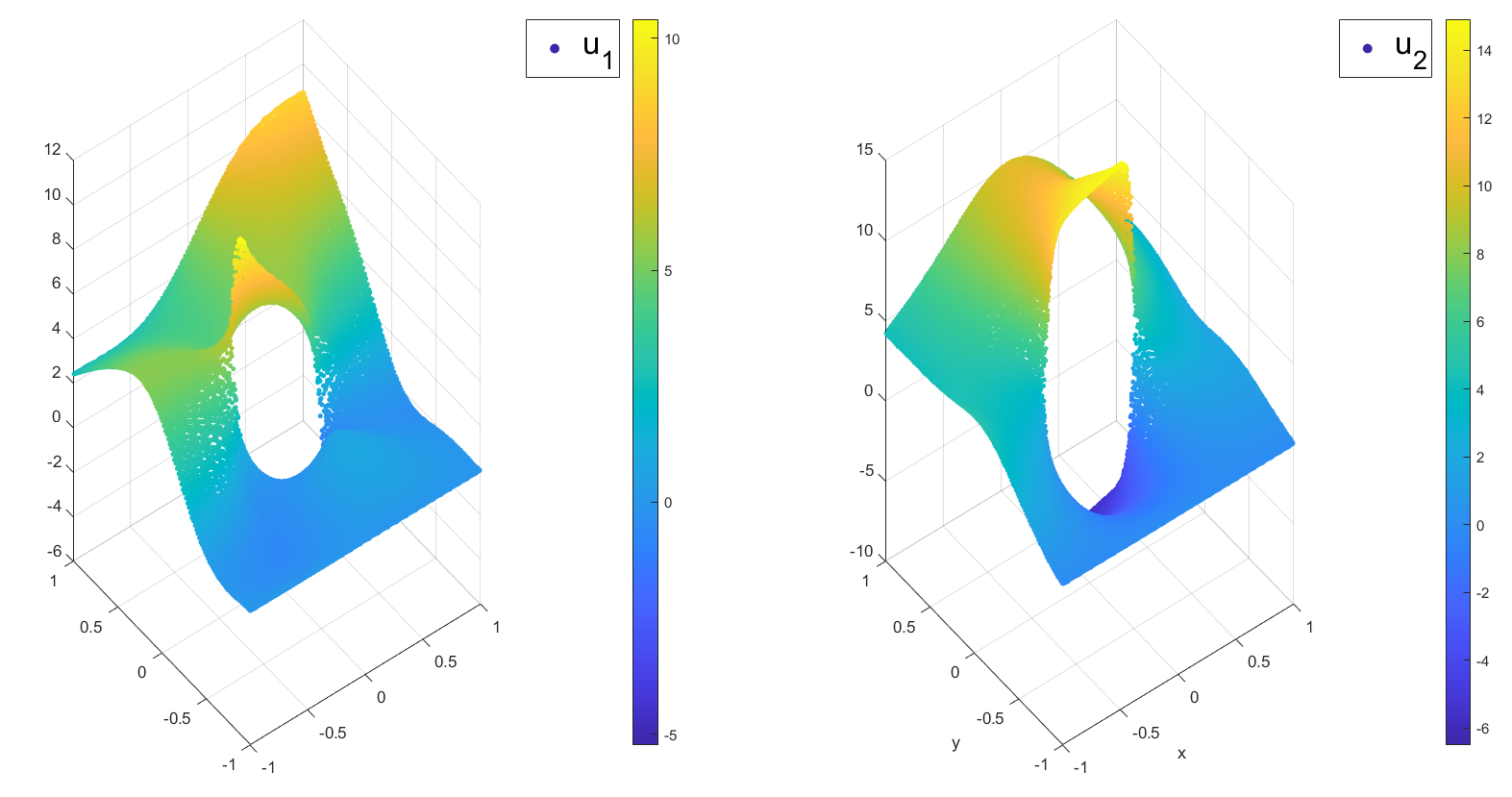}
  \caption{An example of the solution to the forward problem \eqref{eq:ForwardProblem} obtained using the Discontinuous Galerkin method.}
  \label{fig:sol_forward_pb}
\end{figure}

\subsection{Reconstruction Algorithm}
For the reconstruction algorithm, we adopt the gradient descent approach introduced in \cite{BMPS18}. Below, we briefly recall the method as it applies to our problem.

To highlight the dependence of the shape derivative on the map $\mathcal{U}\in W^{1,\infty}(\Omega)$, identified with a vector field on $\RR^2$, that encodes the infinitesimal movement of the fault (see \eqref{eq:FaultDeformationDef}), in this section we will use the notation 
\begin{equation*}
     \dfrac{d \cJ}{d\tau}\lfloor_{\tau=0}=\left\langle D\cJ(S,\bg),\cU\right \rangle,
\end{equation*}
where $D\cJ(S,\bg)$ represents the gradient.  
Furthermore, in all simulations we will assume the slip field $\bg$ is known at the dislocation, focusing solely on reconstructing the dislocation line.
As we have already mentioned, if $S$ can be reconstructed, the reconstruction of $\bg$ is conceptually simpler as this part of the inverse problem is linear.
In what follows, we will focus on presenting the descent algorithm for the reconstruction of $S$ given $\bg$.
Consequently, for notational convenience we simply write
\begin{equation*}
\dfrac{d \cJ}{d\tau}\lfloor_{\tau=0}=\left\langle D\cJ(S),\cU\right \rangle.
\end{equation*}
We also use $((,))$ to denote the $L^2$-inner product.

We also continue to specialize to the case that the dislocation line $S$ is a straight segment, so it is uniquely determined by its two vertices $\bm{P}_l\in \RR^2$, $l=1,2$.

A key step in the descent algorithm is the computation of the descent direction at each step. Let $X \subset W^{1,\infty}(\Omega, \mathbb{R}^2)$ be a suitable subspace. Then, at iteration level $k\in \NN$, we solve the equation
\begin{equation}\label{eq:descent_alg_aux1}
(( \btheta^k,\delta\btheta)) + \langle D\cJ(S^k), \delta \btheta\rangle = 0
\end{equation}
for every $\delta \btheta \in X$, where $\btheta^k \in X$ denotes the descent direction and $S^k$ is the dislocation line computed at the $k$-th iteration. The subspace $X$ is chosen so that $S^k$ is a segment contained in $\Omega$ for each $k$.
For solving \eqref{eq:descent_alg_aux1}, we discretize the problem  and employ the idea already contained in \cite{BMPS18}.
That is, given that the dislocation line is uniquely defined by its vertices, we compute the descent direction for each vertex individually before updating the partition at each iteration $k$. 

Specifically, we recall that we have two meshes. A coarse mesh used for the measurements and a fine mesh used for the DG approximation.
The vertex of the dislocation segment are assumed to be on both meshes. At iteration level $k$, for each vertex $\bm{P}_l^k$, $l=1,2$, of $S^k$, we calculate the gradient of the functional $\mathcal{J}$ with respect to the position of the vertex $\bmP^k_l$ as follows. We select two maps $\mathcal{U}^k_{l,1}$, $\mathcal{U}^k_{l,2}\in W^{1,\infty}(\Omega, \mathbb{R}^2)$ with the properties that:
\begin{enumerate}
\item $\mathcal{U}^k_{l,1}, \mathcal{U}^k_{l,2}$ have support strictly contained within $\Omega$.
\item $\mathcal{U}^k_{l,1}$ and $\mathcal{U}^k_{l,2}$ are piecewise linear along the sides of the (coarse) partition and satisfy
\begin{equation*}
\mathcal{U}^k_{l,1}(\bmP^k_j) = (\delta_{jl}, 0), \quad \mathcal{U}^k_{l,2}(\bmP^k_j) = (0, \delta_{jl}),
\end{equation*}
where $\delta_{jl}$ denotes the Kronecker delta and $\bmP^k_j$ denotes any vertex of the mesh.
\end{enumerate}
Then, the gradient of $\mathcal{J}$ with respect to the position of the vertex $\bmP^k_l$ is given by:
\begin{equation*}
         \left(\Big\langle  D\cJ(S^k), \mathcal{U}^k_{l,1}\Big\rangle,\Big\langle  D\cJ(S^k), \mathcal{U}^k_{l,2}\Big\rangle\right),
\end{equation*}
and hence the steepest descent at the $k$-th iteration is represented by the vector
\begin{equation}\label{eq:descent_direction}
\btheta^k_{l,j} = -\left(\Big\langle  D\cJ(S^k), \mathcal{U}^k_{l,1}\Big\rangle,\Big\langle  D\cJ(S^k), \mathcal{U}^k_{l,2}\Big\rangle\right),
\end{equation}

For the DG approximation, we define the maps $\mathcal{U}^k_{l,1}$ and $\mathcal{U}^k_{l,2}$ as follows:
\begin{equation*}
\mathcal{U}^k_{l,1} = (\varphi^k_l, 0), \quad \mathcal{U}^k_{l,2} = (0, \varphi^k_l),
\end{equation*}
where $\varphi^k_l$ is the hat function associated with the node $\bmP^k_l$ of the coarse mesh (see, for example, Figure \ref{fig:mesh_ip}); that is, $\varphi^k_l$ is piecewise linear and 
\begin{equation*}
\varphi^k_l(\bmP^k_j) = \delta_{jl}.
\end{equation*}
We then stop the algorithm when a desired tolerance is reached or a maximum number of iterations has occurred as customary.
Below is a pseudocode for the reconstruction algorithm as implemented in MATLAB$^{\text{\copyright}}$ for a specific set of parameters.

\bigskip

\begin{algorithm}
\caption{Reconstruction Algorithm}
\begin{algorithmic}[1]
\State \textbf{Parameters:} $maxIter = 5000$, $tol = 1 \times 10^{-7}$, $\alpha = 1 \times 10^{-6}$, $nIter=150$
\State Initialize geometry and mesh
\For{$k = 1$ \textbf{to} $maxIter$}
    \State Solve the forward problem \eqref{eq:ForwardProblem}
    \State Compute gradient of the solution. 
    \State Extract boundary data for adjoint problem
    \State Solve the adjoint problem \eqref{eq:AdjointProblem}
    \State Compute gradient of the adjoint solution. 
    \State Calculate shape derivatives using $\mathcal{U}^k_1$ and $\mathcal{U}^k_2$. 
    \State Update the vertices of the dislocation, using \eqref{eq:descent_direction}, i.e. $\bm{P}^{k+1}_l=\bm{P}^k_l-\alpha \theta^k_l$.
    \If{$k > nIter$ \textbf{and}  max($\theta^{k}_l$) $<$ $tol$}
        \State \textbf{break} \text{ (Convergence reached)}
    \EndIf
   \State Update geometry and mesh.
\EndFor
\end{algorithmic}
\end{algorithm}

\subsubsection{Numerical tests}

All the numerical simulations presented in this section has been performed in MATLAB$^{\text{\copyright}}$.
As already mentioned, the computational domain is a square $\Omega = [-1,1] \times [-1,1]$, with homogeneous Dirichlet boundary conditions on $y = -1$, modeling the buried part of the boundary,  and homogeneous Neumann boundary conditions on the other three sides, modeling the exposed part of the boundary at the Earth's surface (see Figure \ref{f:NumericalSetUp}).

We include noise to the data to model measurement errors. Specifically, for a noiseless boundary measurement $ \bu_m \in H^{1/2}(\partial \Omega) $, the noisy data $ \widehat{\bu}_m $ is generated by adding uniform noise to $ \bu_m $ as follows:
\begin{equation*}
\widehat{\bu}_m(\bx) = \bu_m(\bx) + \varepsilon \|\bu_m\|_{L^2(\partial\Omega)}.
\end{equation*}
where $ \varepsilon $ is a uniform random variable drawn from the interval $ (-a, a) $, with $ a > 0 $ determined based on the desired noise level (unless specified otherwise, we set $a=0.0007$). In the numerical implementation, the noise is added pointwise to each node of the coarse mesh used for the measurements.
To quantify the noise level, we compute the relative $L^2$ error:
\begin{equation*}
\frac{\| \widehat{\bu}_m - \bu_m \|_{L^2(\partial \Omega)}^2}{\| \bu_m \|_{L^2(\partial \Omega)}^2}.
\end{equation*}
We include this error for each of the numerical tests in the corresponding figure below.

We perform simulations both in the case of a slip field $\bg\in H^\half_{00}(S)$, for which the variational formulation of the forward problem and the shape derivative is rigorously justified, as well as in the case of constant slip field $\bg$, a configuration often used in seismology. For the constant slip field, we study the case of a highly anisotropic oblique slip, setting:
\begin{equation*}
\bg = (-10, 20).
\end{equation*}
The presence of singularities in the elastic displacement at the tips of the dislocation aides in the reconstruction.

For the case of a slip vanishing at the endpoint of the dislocation segment, we choose a slip that vanishes to high order, which makes the reconstruction more difficult:
\begin{equation}\label{eq:numerical_g_compact_supp}
\bg(s) = \left( 
-10 \, \left( -\left(\frac{100}{39}\right)^{12} \, s^{12} + 1 \right) \,\chi_{(-\frac{39}{100}, \frac{39}{100})}(s),
\,20 \, \left( -\left(\frac{100}{39}\right)^{12} \, s^{12} + 1 \right) \, \chi_{(-\frac{39}{100}, \frac{39}{100})}(s) 
\right),
\end{equation}
where $\chi_{(-\frac{39}{100}, \frac{39}{100})}(s)$ is the characteristic function of the interval $[-39/100,39/100]$, and the scalar $s$ parameterizes the segment.
Both cases are designed to test the limitation of the algorithm.

Similarly, the regularization parameter $\alpha$ used for updating the dislocation vertices is chosen sufficiently small, but non zero,  $\alpha = 10^{-6}$. Finally, the elastic coefficients are taken constant to simplify the expression of the shape derivative and to improve 
stability. For simplicity, we set the Lam\'e coefficients $\lambda = \mu = 1$.

For most of the numerical tests,  we  measure the displacement on the entire exposed portion of $\pa\Omega$, the top and the two lateral sides of the square $\Omega$. One measurement is enough in this case to reconstruct the dislocation line $S$. For comparison, we include a test for which the displacement is measured only on the top side of  $\Omega$ (see Figure \ref{fig:one_meas} and Test 5). As expected, the reconstruction with one measurement is poor, and two measurements are needed in this case.

\bigskip

We include the results for the following numerical tests:
\bigskip

\textbf{Test 1: Horizontal dislocation with constant slip field $\bg$ (Figure \ref{fig:reconstruction1}).}
\begin{figure}[H]
  \centering
  \includegraphics[width=0.8\textwidth]{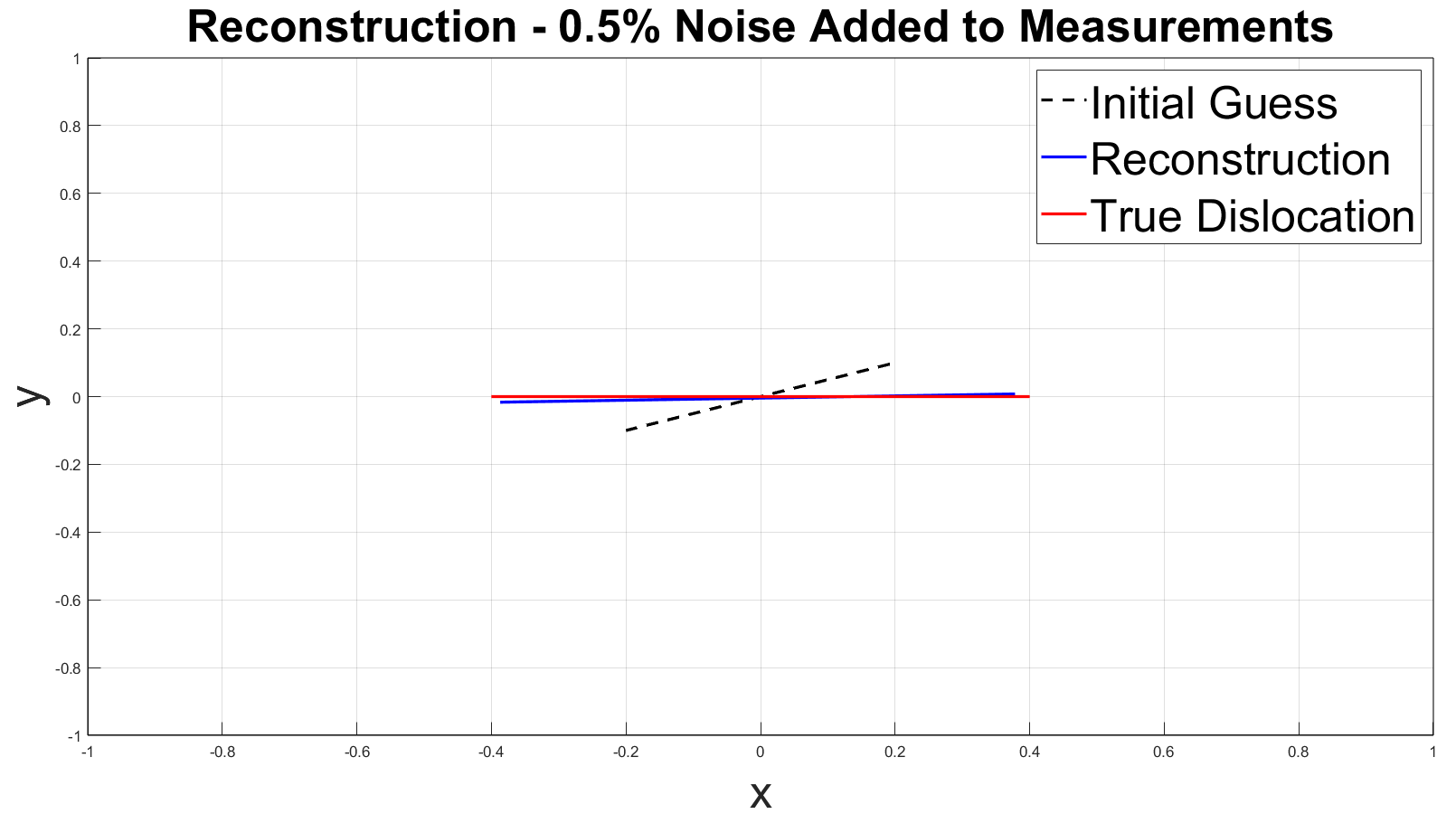}
  \caption{Reconstruction of a horizontal dislocation  with vertices at $(-0.4,0)$ and $(0.4,0)$, using only one boundary measurement with $0.5\%$  noise level . The initial guess for the reconstruction algorithm is represented by  the green dashed line. The slip-field along the dislocation line is constant, $\bg=(-10,20)$.}
  \label{fig:reconstruction1}
\end{figure}

\bigskip

\textbf{Test 2: Horizontal dislocation with  slip field $\bg$ vanishing at the endpoints (Figure \ref{fig:reconstruction2})}
\begin{figure}[H]
  \centering
  \includegraphics[width=0.8\textwidth]{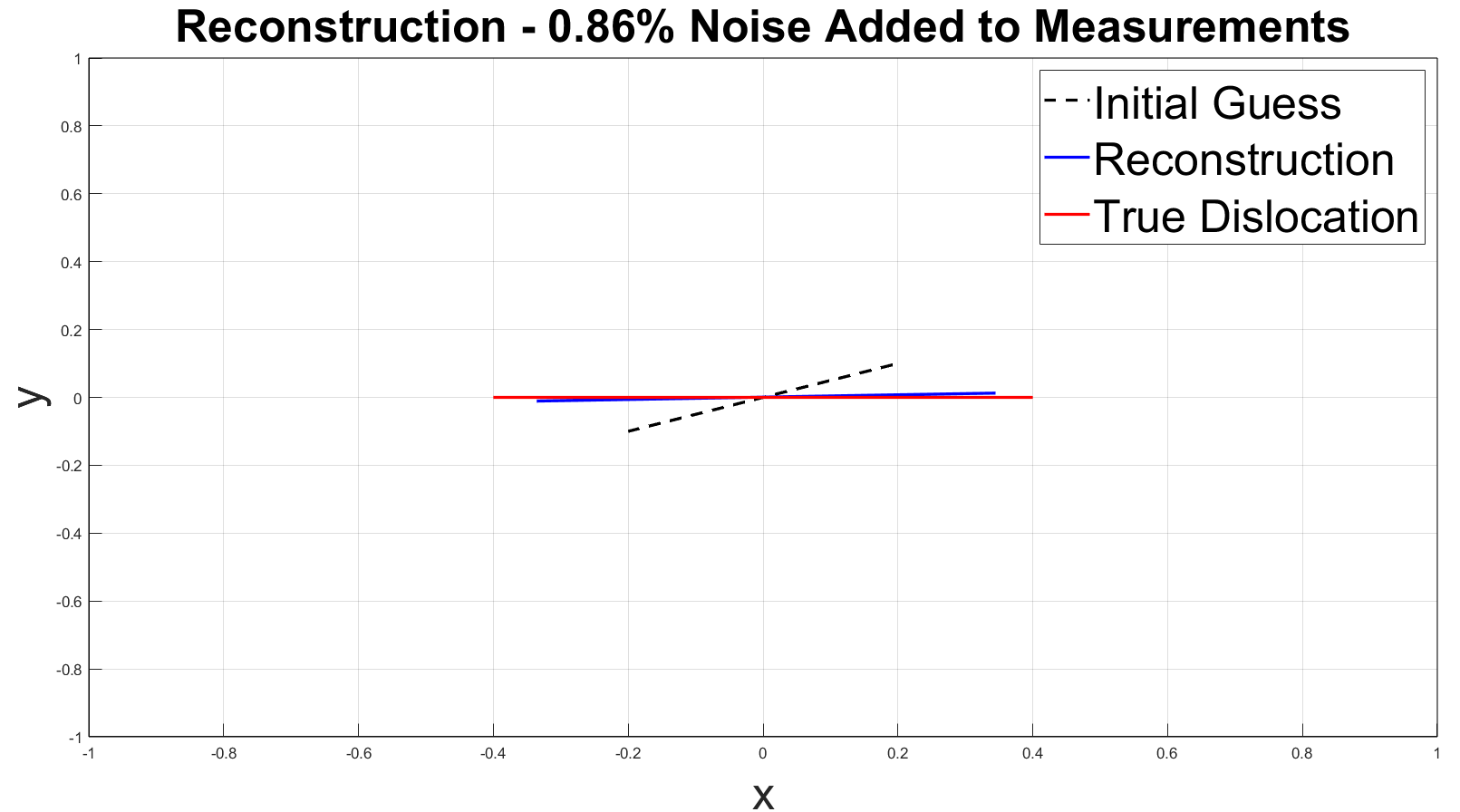}
  \caption{Reconstruction of a horizontal dislocation with vertices at  $(-0.4,0)$ and $(0.4,0)$, using only one boundary measurement with  $0.86\%$ noise level . The initial guess for the reconstruction algorithm is represented by the green dashed line. The slip field $\bg$ is chosen as in \eqref{eq:numerical_g_compact_supp}.}
  \label{fig:reconstruction2}
\end{figure}

\bigskip

\textbf{Test 3: Oblique dislocation with constant slip field $\bg$ (Figure \ref{fig:reconstruction3})}
\begin{figure}[H]
  \centering
  \includegraphics[width=0.8\textwidth]{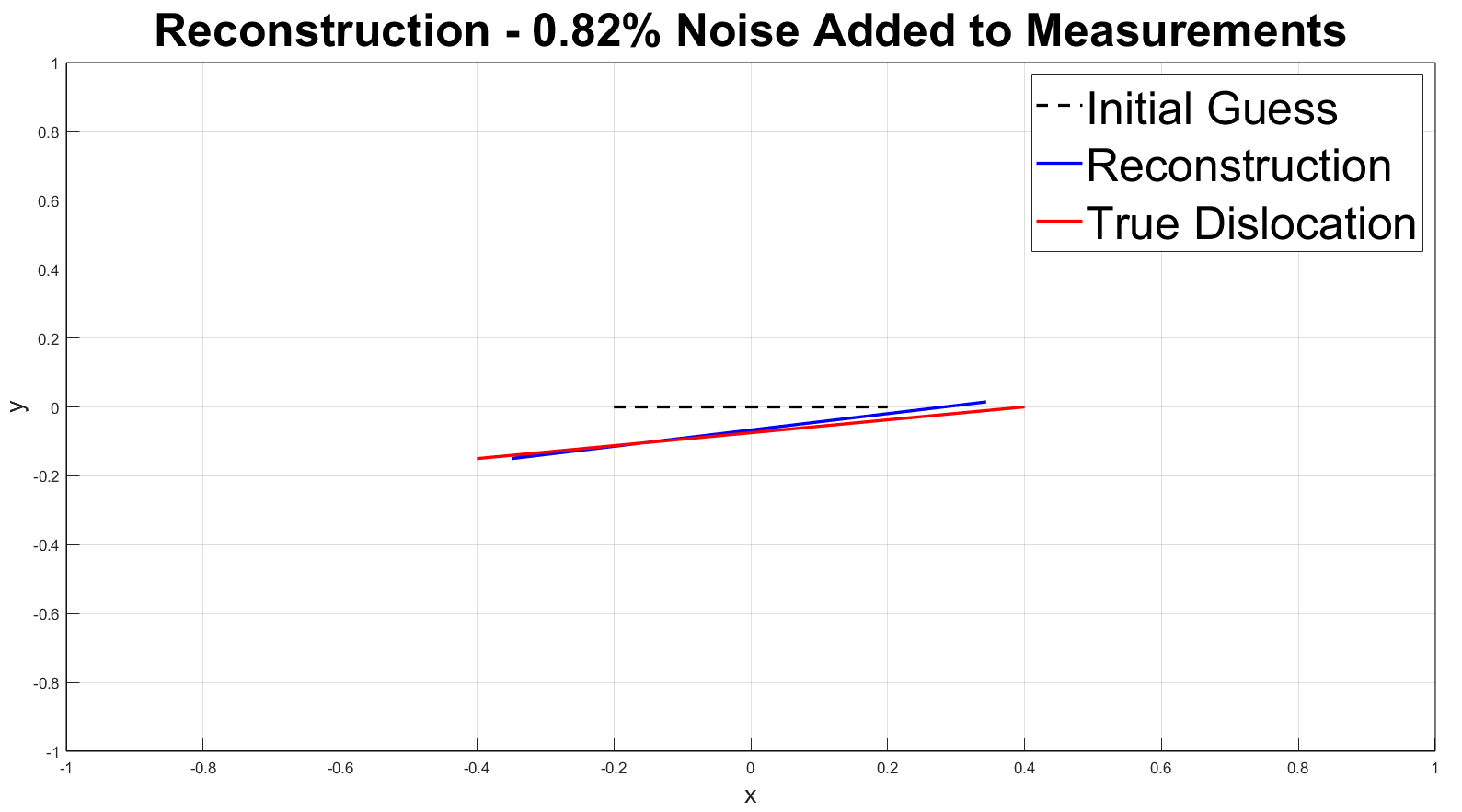}
  \caption{Reconstruction of an oblique dislocation with vertices at $(-0.4,-0.15)$ and $(0.4,0)$, using only one boundary measurement with $0.82\%$ noise level. The initial guess for the reconstruction algorithm is represented by the green dashed line. The slip field along the dislocation line is constant, $\bg=(-10,20)$.}
  \label{fig:reconstruction3}
\end{figure}

\bigskip

\textbf{Test 4: Oblique dislocation with slip field $\bg$  vanishing at the endpoints (Figure \ref{fig:reconstruction4})}
\begin{figure}[H]
  \centering
  \includegraphics[width=0.8\textwidth]{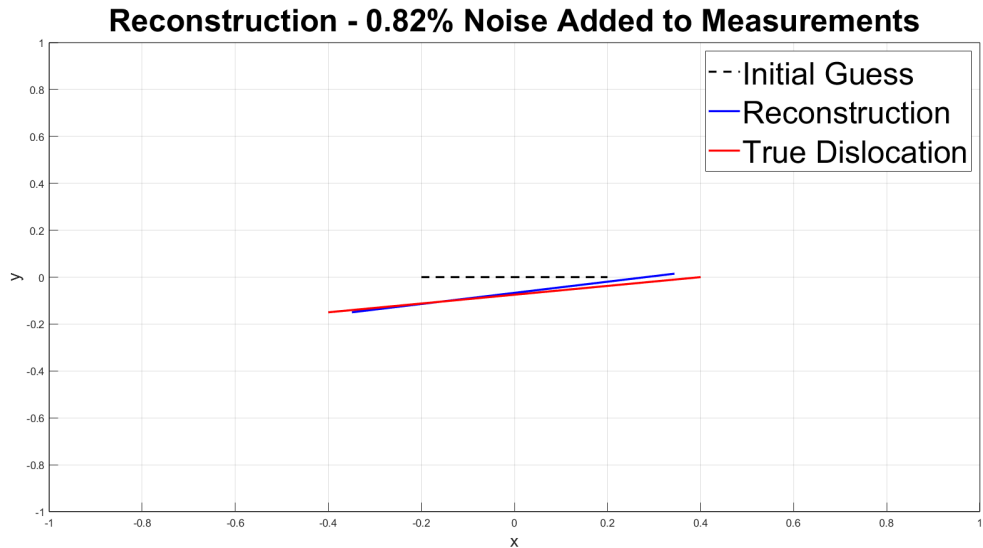}
  \caption{Reconstruction of an oblique dislocation with vertices at $(-0.4,-0.15)$ and $(0.4,0)$, using only one boundary measurement with $0.82\%$ noise level.   The initial guess for the reconstruction algorithm is represented by the green dashed line. The slip field $\bg$ is chosen as in \eqref{eq:numerical_g_compact_supp}.} \label{fig:reconstruction4}
\end{figure}

\bigskip

\textbf{Test 5: Horizontal dislocation with slip field $\bg$  vanishing at the endpoints and measurements only on one side  (Figure \ref{fig:one_meas})}
In this numerical test, we consider the case where measurements can only be collected on one side, specifically along the side given by the points $\{y=1\}$. As expected, in this case, the reconstructions are less accurate. In particular, when assuming knowledge of only one boundary displacement measurement, the reconstructions are of  poor quality (see Figure \ref{fig:reconstruction5_a}). However, when more measurements are available (still on the same side), the situation gradually improves (see Figure \ref{fig:reconstruction5_b} for the case of two measurements), even if the results are still not satisfactory. In this latter case, to generate the second measurement, we use the slip field $\bg$ defined by the following data
\begin{equation*}
\bg(s) = \left( 
60 \, \left( -\left(\frac{1000}{395}\right)^{10} \, s^{10} + 1 \right) \,\chi_{(-\frac{395}{1000}, \frac{395}{1000})}(s),
\,-10 \, \left( -\left(\frac{1000}{395}\right)^{10} \, s^{10} + 1 \right) \, \chi_{(-\frac{395}{1000}, \frac{395}{1000})}(s) 
\right),
\end{equation*}

\begin{figure}[H]
  \centering
  \begin{subfigure}[b]{0.6\textwidth}
\includegraphics[width=\textwidth]{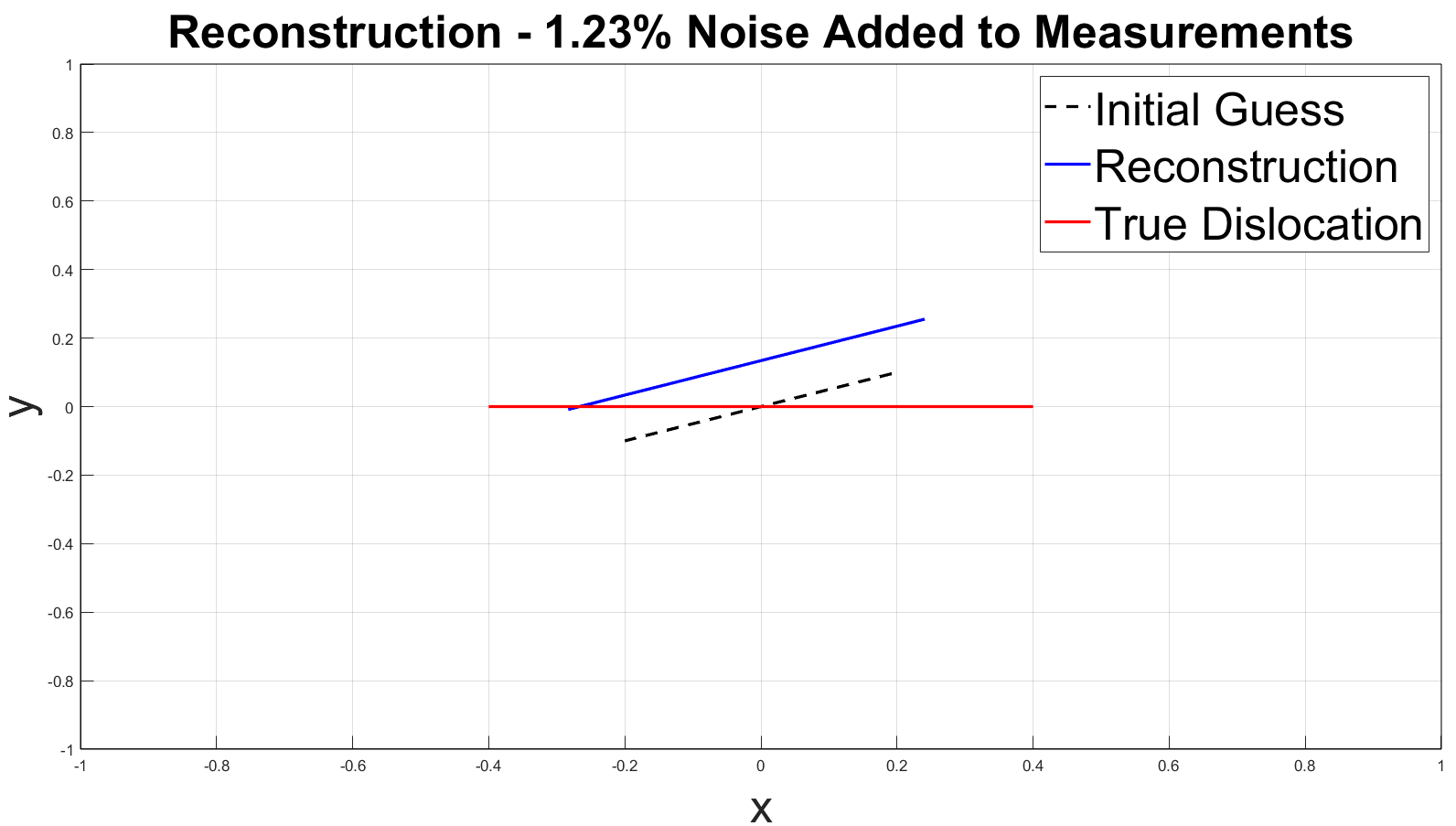}
 \caption{Reconstruction of a horizontal dislocation between vertices $(-0.4,0)$ and $(0.4,0)$, using only one boundary measurement with $1.23\%$ noise level. The initial guess for the reconstruction algorithm is represented by the green dashed line. The slip field $\bg$ is chosen as in \eqref{eq:numerical_g_compact_supp}.}\label{fig:reconstruction5_a}
\end{subfigure}
\hfill
 \begin{subfigure}[b]{0.6\textwidth}
  \centering
\includegraphics[width=\textwidth]{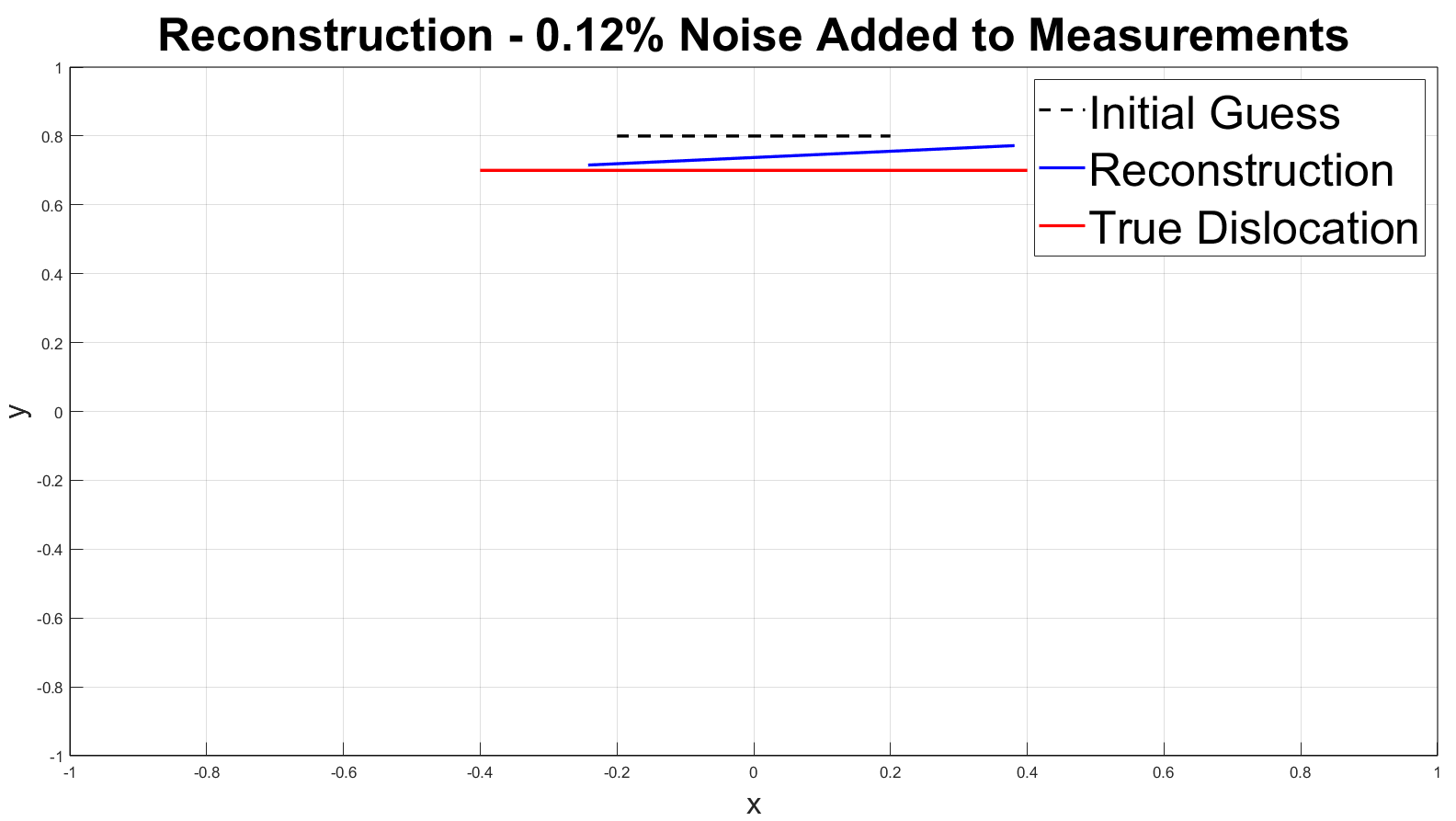}
 \caption{Reconstruction of a horizontal dislocation between vertices $(-0.4,0.7)$ and $(0.4,0.7)$, using two boundary measurements with $0.12\%$ noise level. The initial guess for the reconstruction algorithm is represented by the green dashed line. The slip field $\bg$ is chosen as in \eqref{eq:numerical_g_compact_supp}.}\label{fig:reconstruction5_b}
\end{subfigure}
\caption{The case of measurements only on the top side $y=1$ of the square $\Omega$.}
\label{fig:one_meas}    
\end{figure}

\section*{Acknowledgments}
The authors thank Paola Antonietti for useful discussion and for providing a MATLAB$^\text{\copyright}$ package to implement the DG method. A. Mazzucato was partially supported by the US National Science Foundation Grants DMS-1909103, DMS-2206453, and Simons Foundation Grant 1036502, and thanks the hospitality of the Mathematics Program at New York University-Abu Dhabi (NYUAD) and of the Mathematics Department at Milan University, where part of this work was conducted. Elena Beretta's research has been partially supported by NYUAD Science Program Project Fund AD364.  A. Lee was also partially supported by the US National Science Foundation Grants DMS-1909103 and DMS-2206453. A. Aspri is member of the group GNAMPA 
(Gruppo Nazionale per l’Analisi Matematica, la Probabilità e le loro Applicazioni) of INdAM (Istituto Nazionale di Alta Matematica).

\subsection*{Conflict of Interest Statement}
The authors declare that they have no conflict of interest.

\subsection*{Data Availability}
All the data used in this work is synthetic data generated by an algorithm coded in MATLAB$^{\text{\copyright}}$. A description of the algorithm is 
included in the body of the manuscript. Output of the numerical scheme can be made available upon request. 


\begin{thebibliography}{10}

\bibitem{AntAyuMazQua16}
P.~F. Antonietti, B.~Ayuso~de Dios, I.~Mazzieri, and A.~Quarteroni.
\newblock Stability analysis of discontinuous {G}alerkin approximations to the
  elastodynamics problem.
\newblock {\em J. Sci. Comput.}, 68(1):143--170, 2016.

\bibitem{AntMazQuaRap12}
P.~F. Antonietti, I.~Mazzieri, A.~Quarteroni, and F.~Rapetti.
\newblock Non-conforming high order approximations of the elastodynamics
  equation.
\newblock {\em Comput. Methods Appl. Mech. Engrg.}, 209/212:212--238, 2012.

\bibitem{ArnBreCocMar01}
D.~N. Arnold, F.~Brezzi, B.~Cockburn, and L.~D. Marini.
\newblock Unified analysis of discontinuous {G}alerkin methods for elliptic
  problems.
\newblock {\em SIAM J. Numer. Anal.}, 39(5):1749--1779, 2001/02.

\bibitem{ABM23}
A.~Aspri, E.~Beretta, and A.~L. Mazzucato.
\newblock Dislocations in a layered elastic medium with applications to fault
  detection.
\newblock {\em J. Eur. Math. Soc. (JEMS)}, 25(3):1091--1112, 2023.

\bibitem{ABMdH19}
A.~Aspri, E.~Beretta, A.~L. Mazzucato, and M.~V. De~Hoop.
\newblock Analysis of a model of elastic dislocations in geophysics.
\newblock {\em Arch. Ration. Mech. Anal.}, 236(1):71--111, 2020.

\bibitem{BFKL2010}
E.~Beretta, E.~Francini, E.~Kim, and J.-Y. Lee.
\newblock Algorithm for the determination of a linear crack in an elastic body
  from boundary measurements.
\newblock {\em Inverse Problems}, 26(8):085015, 13, 2010.

\bibitem{BFV2008}
E.~Beretta, E.~Francini, and S.~Vessella.
\newblock Determination of a linear crack in an elastic body from boundary
  measurements---{L}ipschitz stability.
\newblock {\em SIAM J. Math. Anal.}, 40(3):984--1002, 2008.

\bibitem{BMPS18}
E.~Beretta, S.~Micheletti, S.~Perotto, and M.~Santacesaria.
\newblock Reconstruction of a piecewise constant conductivity on a polygonal
  partition via shape optimization in {EIT}.
\newblock {\em J. Comput. Phys.}, 353:264--280, 2018.

\bibitem{DLMarXiv23}
H.~Diao, H.~Liu, and Q.~Meng.
\newblock {Dislocations with corners in an elastic body with applications to
  fault detection}.
\newblock {\em arXiv e-prints}, page arXiv:2309.09706, Sept. 2023.

\bibitem{Eshelby73}
J.~D. Eshelby.
\newblock Dislocation theory for geophysical applications.
\newblock {\em {P}hilosophical {T}ransactions of the {R}oyal {S}ociety {A}},
  {274}({1239}):{331--338}, {1973}.

\bibitem{F84}
G.~Fichera.
\newblock The {I}talian contribution to the mathematical theory of elasticity.
\newblock {\em Meccanica--J. Italian Assoc. Theoret. Appl. Mech.},
  19(4):259--268, 1984.

\bibitem{RLL20}
R.~Guo, T.~Lin, and Y.~Lin.
\newblock Recovering elastic inclusions by shape optimization methods with
  immersed finite elements.
\newblock {\em J. Comput. Phys.}, 404:109123, 24, 2020.

\bibitem{CreepFault}
R.~A. Harris.
\newblock Large earthquakes and creeping faults.
\newblock {\em Reviews of Geophysics}, 55(1):169--198, 2017.

\bibitem{seismic2}
Y.~Hsu, M.~Simons, S.~Yu, L.~Kuo, and H.~Chen.
\newblock A two-dimensional dislocation model for interseismic deformation of
  the taiwan mountain belt.
\newblock {\em EARTH AND PLANETARY SCIENCE LETTERS}, 211(3-4):287--294, JUN 30
  2003.

\bibitem{IV06}
I.~R. Ionescu and D.~Volkov.
\newblock An inverse problem for the recovery of active faults from surface
  observations.
\newblock {\em Inverse Problems}, 22(6):2103--2121, 2006.

\bibitem{IV09}
I.~R. Ionescu and D.~Volkov.
\newblock Detecting tangential dislocations on planar faults from traction free
  surface observations.
\newblock {\em Inverse Problems}, 25(1):015012, 25, 2009.

\bibitem{KP22}
S.~B. Kaudur and M.~J. Patil.
\newblock Shape optimization with immersed interface finite element method.
\newblock {\em Internat. J. Numer. Methods Engrg.}, 123(23):5907--5936, 2022.

\bibitem{Laurain20}
A.~Laurain.
\newblock Distributed and boundary expressions of first and second order shape
  derivatives in nonsmooth domains.
\newblock {\em J. Math. Pures Appl. (9)}, 134:328--368, 2020.

\bibitem{Mindl54}
R.~D. Mindlin.
\newblock Force at a point in the interior of a semi-infinite solid.
\newblock In {\em Proceedings of {T}he {F}irst {M}idwestern {C}onference on
  {S}olid {M}echanics, {A}pril, 1953}, pages 56--59. University of Illinois,
  The Engineering Experiment Station, Urbana, IL, 1954.

\bibitem{MR04}
A.~Morassi and E.~Rosset.
\newblock Stable determination of cavities in elastic bodies.
\newblock {\em Inverse Problems}, 20(2):453--480, 2004.

\bibitem{Okada92}
Y.~Okada.
\newblock Internal deformation due to shear and tensile fault in a half-space.
\newblock {\em Bulletin of the Seismo-logical Society of America},
  {82}({2}):{1018--1040}, {1992}.

\bibitem{RivShaWhe03}
B.~Rivi\`ere, S.~Shaw, M.~F. Wheeler, and J.~R. Whiteman.
\newblock Discontinuous {G}alerkin finite element methods for linear elasticity
  and quasistatic linear viscoelasticity.
\newblock {\em Numer. Math.}, 95(2):347--376, 2003.

\bibitem{SchXuZho06}
R.~Schneider, Y.~Xu, and A.~Zhou.
\newblock An analysis of discontinuous {G}alerkin methods for elliptic
  problems.
\newblock {\em Adv. Comput. Math.}, 25(1-3):259--286, 2006.

\bibitem{Segall10}
P.~Segall.
\newblock {\em Earthquake and Volcano Deformation}.
\newblock Princeton University Press, {2010}.

\bibitem{Zolesio92}
J.~Soko\l~owski and J.-P. Zol\'{e}sio.
\newblock {\em Introduction to shape optimization}, volume~16 of {\em Springer
  Series in Computational Mathematics}.
\newblock Springer-Verlag, Berlin, 1992.
\newblock Shape sensitivity analysis.

\bibitem{TV19}
F.~Triki and D.~Volkov.
\newblock Stability estimates for the fault inverse problem.
\newblock {\em Inverse Problems}, 35(7):075007, 20, 2019.

\bibitem{Zwieten_et_al14}
G.~van Zwieten, E.~van Brummelen, K.~van~der Zee, M.~Gutiérrez, and
  R.~Hanssen.
\newblock Discontinuities without discontinuity: The weakly-enforced slip
  method.
\newblock {\em Computer Methods in Applied Mechanics and Engineering}, 271:144
  -- 166, 2014.

\bibitem{Zwieten_et_al13}
G.~J. van Zwieten, R.~F. Hanssen, and M.~A. Gutiérrez.
\newblock Overview of a range of solution methods for elastic dislocation
  problems in geophysics.
\newblock {\em Journal of Geophysical Research: Solid Earth},
  118(4):1721--1732, 2013.

\bibitem{seismic3}
J.~Vergne, R.~Cattin, and J.~Avouac.
\newblock On the use of dislocations to model interseismic strain and stress
  build-up at intracontinental thrust faults.
\newblock {\em GEOPHYSICAL JOURNAL INTERNATIONAL}, 147(1):155--162, OCT 2001.

\bibitem{Vol09}
D.~Volkov.
\newblock Faults in elastic half space: direct and inverse problem.
\newblock In {\em Imaging microstructures}, volume 494 of {\em Contemp. Math.},
  pages 81--94. Amer. Math. Soc., Providence, RI, 2009.

\bibitem{Vol22}
D.~Volkov.
\newblock A stochastic algorithm for fault inverse problems in elastic half
  space with proof of convergence.
\newblock {\em J. Comput. Math.}, 40(6):957--978, 2022.

\bibitem{VVI17}
D.~Volkov, C.~Voisin, and I.~R. Ionescu.
\newblock Reconstruction of faults in elastic half space from surface
  measurements.
\newblock {\em Inverse Problems}, 33(5):055018, 27, 2017.

\bibitem{V1907}
V.~Volterra.
\newblock Sur l'\'{e}quilibre des corps \'{e}lastiques multiplement connexes.
\newblock {\em Ann. Sci. \'{E}cole Norm. Sup. (3)}, 24:401--517, 1907.

\bibitem{seismic1}
K.~Wang, R.~Wells, S.~Mazzotti, R.~Hyndman, and T.~Sagiya.
\newblock A revised dislocation model of interseismic deformation of the
  cascadia subduction zone.
\newblock {\em JOURNAL OF GEOPHYSICAL RESEARCH-SOLID EARTH}, 108(B1), JAN 18
  2003.

\end{thebibliography}

\end{document}